% documentclass declaration (non LMS)
\documentclass{amsart}[10pt,a4paper,twoside]

% documentclass declaration (LMS)
%\documentclass{lms}[10pt,a4paper,oneside]

\usepackage[T1]{fontenc}
\usepackage[utf8]{inputenc}
\usepackage[english]{babel}
\usepackage{
  amsmath,
  amssymb,
  enumerate,
  bbm,
  textcomp,
  appendix,
  tabularx
}
\usepackage{tikz}
\usepackage[all]{xy}
\usepackage[active]{srcltx}
\usepackage{hyperref}
\usepackage{verbatim}
\usetikzlibrary{decorations.pathreplacing}
%\usepackage{lineno}
%\linenumbers
% margins (non LMS)

\evensidemargin=0cm
\oddsidemargin=0cm
\topmargin=0cm
\headsep=1cm
\headheight=1cm
\marginparsep=0cm
\marginparwidth=1in
\footskip=0,5cm
\textheight=20cm
\textwidth=16cm

% end of

% theorem declarations (non LMS)

\theoremstyle{plain}
\newtheorem{thm}{Theorem}
\newtheorem*{lem*}{Lemma}
\newtheorem*{prop*}{Proposition}
\newtheorem*{cor*}{Corollary}

% end of

% theorem declarations (LMS)

% \newtheorem{thm}{Theorem}
% \newtheorem{lem}{Lemma}
% \newtheorem{prop}{Proposition}

% end of 

\renewcommand{\k}{\mathbbm{k}}
\newcommand{\tube}{one-parameter family of pairwise orthogonal tubes}
\newcommand{\za}{maximal convex family of $\mathbb ZA_\infty$
  components}
\newcommand{\hered}{hereditary abelian generating subcategory}
\newcommand{\hereds}{hereditary abelian generating subcategories}

\begin{document}

 \title[On the strong global dimension]{The strong global dimension of
  piecewise hereditary algebras}

%  authors, addresses, grants and dedication (non LMS)

\author{Edson Ribeiro Alvares}

\address[Edson Ribeiro Alvares]{Centro Polit\'ecnico, Departamento de Matem\'atica,
Universidade Federal do Paran\'a, CP019081, Jardim das Americas,
Curitiba-PR, 81531-990, Brazil
}

\thanks{The first named author acknowledges support from DMAT-UFPR and
CNPq-Universal 477880/2012-6}

\email{rolo1rolo@gmail.com}

\author{Patrick Le Meur}

\address[Patrick Le Meur]{
Laboratoire de Math\'ematiques, Universit\'e Blaise Pascal \&
  CNRS, Complexe Scientifique Les Cézeaux, BP 80026, 63171 Aubi\`ere
  cedex, France}

\curraddr{Universit\'e Paris Diderot, Sorbonne Paris Cit\'e, Institut de
   Math\'ematiques de Jussieu-Paris Rive Gauche, UMR 7586, CNRS,
  Sorbonne Universit\'es, UMPC Univ. Paris 06, F-75013, Paris, France}

\thanks{The second named author acknowledges financial support from
  FAPESP 2014/09310-5, CAPES, MathAmSud,
  and RFBM}

\email{patrick.lemeur@imj-prg.fr}

\author{Eduardo N. Marcos}

\address[Eduardo N. Marcos]{IME-USP (Departamento de Mat\'ematica),
  Rua Mat\~ao 1010 Cid. Univ., S\~ao Paulo, 055080-090, Brazil}

\thanks{The third named author acknowledges financial support from
  CNPq, FAPESP, MathAmSud, and Prosul-CNPq
  n\textdegree 490065/2010-4}

\email{enmarcos@ime.usp.br}

\dedicatory{In memory of Dieter Happel}

% end of

% authors, dedication, grants (LMS)

% \author{Edson Ribeiro Alvares, Patrick Le Meur, Eduardo N. Marcos}

% \dedication{In memory of Dieter Happel}

% \extraline{
%   The first named author acknowledges support from DMAT-UFPR and
% CNPq-Universal 477880/2012-6.\\
%   The second named author acknowledges financial support from FAPESP, CAPES, MathAmSud,
%   and RFBM.\\
% The third named author acknowledges financial support from
%   CNPq, FAPESP, MathAmSud, and Prosul-CNPq n\textdegree 490065/2010-4.
% }
% \classno{16G60, 16G70}

% end of

\date{\today}

\begin{abstract}
Let $A$ be a finite-dimensional piecewise hereditary algebra over an
algebraically closed field. This text
investigates the strong global dimension of $A$. This invariant is characterised in terms of the lengths of
sequences of tilting mutations relating $A$ to a hereditary abelian
category, in terms of the generating hereditary abelian subcategories
of the derived category of $A$, and in terms of the Auslander-Reiten
structure of that derived category.
\end{abstract}

\maketitle

\section*{Introduction}

Let $A$ be a finite-dimensional algebra over an algebraically closed
field $\k$. Its category of finitely generated (left) modules
is denoted by ${\rm mod}\,A$. Then, $A$ is called \emph{piecewise hereditary} if
the bounded derived category $\mathcal 
D^b({\rm mod}\,A)$ is equivalent as a triangulated category to
$\mathcal D^b(\mathcal H)$ where $\mathcal H$ is a hereditary abelian
($\k$-linear) category with split idempotents, finite-dimensional ${\rm
  Hom}$-spaces, and with tilting objects. In the particular case
where $A\simeq {\rm End}_{\mathcal H}(T)^{\rm op}$ for some tilting
object $T\in \mathcal H$, the algebra $A$ is called
\emph{quasi-tilted.} It is called \emph{tilted} when, in addition,
$\mathcal H\simeq {\rm mod}\,H$ for some finite-dimensional hereditary
algebra $H$. In \cite{MR1827736}, Happel proved that
a hereditary abelian category as above is equivalent to the category of
finitely generated modules over a hereditary algebra or to the 
category of coherent sheaves over a weighted projective line 
\cite{MR915180}.

In the representation theory of finite-dimensional algebras, piecewise
hereditary algebras play a particular and important role. On the one
hand, this is due to information that is already known on the
representation theory of hereditary 
algebras, of  tilted algebras (see \cite{MR675063}) or of quasi-tilted
algebras of canonical type (see \cite{MR1414820}), and also to
Happel's description of the bounded derived category of hereditary
abelian categories (see below). On the other
hand, these algebras are used in many parts of representation
theory. For instance, in order to develop the representation theory of
other classes of algebras such as the selfinjective algebras (see
\cite{MR2484737}) or the cluster tilted algebras (see
\cite{MR2409188}), in order to investigate singularity theory (see
\cite{MR3028577}), or in order to categorify cluster algebras (see
\cite{MR2249625}).

The homological
characterisation of quasi-tilted algebras \cite{MR1327209} and the
Liu-Skowro\'nski criterion for tilted algebras (see
\cite{MR2197389}) suggest that the quasi-tilted algebras are the
closest piecewise hereditary 
algebras to hereditary ones, and it is the main objective of this text
to give theoretical and numerical criteria to determine how far a
piecewise hereditary algebra is from being hereditary.

Recall the description of $\mathcal D^b(\mathcal H)$ made by Happel in
\cite{MR935124}: Any object is the direct sum of (finitely many) stalk
complexes $X[i]$ ($X\in \mathcal H$ and $i\in \mathbb Z$); And, for
every $i,j\in \mathbb Z$ and $X,Y\in \mathcal H$, the morphism space
${\rm Hom}(X[i],Y[j])$ is naturally isomorphic to ${\rm Hom}_{\mathcal
  H}(X,Y)$ if $i=0$, to ${\rm Ext}^1_{\mathcal H}(X,Y)$ if $i=1$, and
is equal to zero otherwise. Hence, when $\mathcal D^b({\rm
  mod}\,A)\simeq \mathcal D^b(\mathcal H)$, then there exists a tilting object $T\in\mathcal
D^b(\mathcal H)$ (that is, an object such that ${\rm Hom}(T,T[i])=0$
for $i\in\mathbb Z\backslash\{0\}$, and such that $\mathcal
D^b(\mathcal H)$ is the smallest full triangulated subcategory of
$\mathcal D^b(\mathcal H)$ containing $T$ and stable under taking
direct summands) such that $A\simeq {\rm End}(T)^{\rm op}$ as
$\k$-algebras. In particular, there exists   a
minimal $\ell\in\mathbb N$ and there exists $s\in\mathbb Z$ such
that $T$ lies in the additive closure $\bigvee_{i=0}^\ell\mathcal
H[s+i]$ of the union $\bigcup_{i=0}^\ell \mathcal
  H[s+i]$. When $\ell=0$, then $A$ is quasi-tilted. And one may
expect 
that the larger $\ell$, the further $A$ is from being quasi-tilted. 
Note however that there exist examples where $\ell=1$ and $A$ is
hereditary (see \cite{MR1648603}).

Recall also the characterisation proved by Happel, Rickard and
Schofield \cite{MR916069}: $\mathcal D^b({\rm mod}\,A)$ is equivalent
to bounded derived category of the module category of a
finite-dimensional hereditary  algebra if and only if there exists a
sequence of algebras with 
first term such a hereditary algebra, last term $A$, and where each term is isomorphic to
the opposite of the endomorphism algebra of a tilting module over the
preceding term. If the sequence has $\ell+2$ terms, then ${\rm
  gl.dim.}\,A\leqslant \ell+2$. Again, one expects that, the larger
$\ell$, the further is $A$ from being hereditary. However, in many
examples, ${\rm gl.dim.}\,A$ appears to be small whereas $\ell$ is
large.

In the  previous considerations, the parameter $\ell$  fails to
give a precise measure of how far  a piecewise hereditary algebra
is from being quasi-tilted. Recently a new invariant for piecewise
hereditary algebras has emerged and the present text aims at giving
some evidence of its relevance to give such a measure. This invariant
is the {\it strong global dimension}. It was first defined  in
\cite{S} in terms of  \emph{width} of complexes. The present 
text makes use of a slightly different definition given by Happel and
Zacharia in
\cite{MR2413349} and expressed in terms of \emph{length} of complexes. The
original definition may be recovered from the latter one by adding $1$
to the invariant.
Define the strong global dimension, ${\rm s.gl.dim.}\,A\in \mathbb
N\cup\{+\infty\}$, as follows.
 Let $X$ be an indecomposable object in the homotopy
  category of bounded complexes of finitely generated projective
  $A$-modules. Let 
  \[
    P\colon \cdots\to 0\to 0\to P^r \to P^{r+1}\to \cdots\to P^{s-1}\to P^s\to
    0\to 0\to \cdots\,,
  \]
be a minimal projective resolution of $X$,  where $P^r\neq 0$ and
$P^s\neq 0$.  Then define the length of $X$ as 
\[
  \ell(X)=s-r\,.
\]
The definition of the strong  global dimension used in this text is
\[
  {\rm s.gl.dim.}\, A=\underset{X}{{\rm sup}}\, \ell(X)
\]
where $X$ runs through all such indecomposable objects. It follows
from the definition that ${\rm s.gl.dim.}\,A=1$ if and only if $A$ is
hereditary and not semi-simple. In \cite[Problem 1]{MR2041672}, the
question was asked whether $A$ is piecewise hereditary when it has
finite strong global dimension. This has been studied by several
authors. The case of radical square-zero algebras was treated in 
\cite{MR2041672}.
Moreover, it follows from \cite{AS1,AS2} and \cite{S}
  (see also \cite{MR2041672}) that the class of algebras having both
  finite strong global dimension and a tame repetitive algebra
  coincides with the class of piecewise hereditary algebras having
  nonnegative Euler form.
% This work also proves an alternative
%  characterisation for $A$ to be piecewise hereditary when it is tame,
%  that is, the push-down (or extension-of-scalars) functor ${\rm
%    mod}\,\hat A\to {\rm mod}\,T(A)$ is dense.
 % Here $T(A)=A\ltimes {\rm
 %   Hom}_{\k}(A,\k)$ is the trivial extension and $\hat A$ is the
 % repetitive algebra.
  % Note that a general study of $\mathcal D^b({\rm
  %  mod}\,A)$ is made in \cite{BL,MR2497950} when $A$ has a square-zero radical.
The equivalence conjectured in \cite[Problem 1]{MR2041672} was proved in the general case
by Happel and Zacharia (\cite{MR2413349}) getting as a byproduct that
${\rm
  s.gl.dim.}\,A=2$ if and only if $A$ is
quasi-tilted and not hereditary. It is worth noticing the following
characterisation of the finiteness of the strong global dimension. By
the main result of \cite{S}, the algebra $A$ has finite strong global
dimension if and only if the repetitive algebra $\widehat A$ is
locally support finite in the sense of \cite{MR797444}. Hence, if
$A$ has finite strong global dimension, then the push-down functor
${\rm mod}\,\widehat A\to {\rm mod}\,T(A)$ is dense (here $T(A)$ denotes
the trivial extension). The question to know whether the converse
implication holds  was asked in \cite[Problem 2]{MR2041672}. So
far, that question remains open (see \cite[Theorem (B)]{AS2} for a
positive answer when $\widehat{A}$ is tame).

Let $\mathcal T$ be a triangulated category which is triangle
equivalent to the bounded derived category of a hereditary
abelian category (which is always assumed to be ${\rm Hom}$-finite,
to have split idempotents and tilting objects). Let $T\in\mathcal T$ be a tilting object and let $A$ be the
piecewise hereditary algebra ${\rm
  End}(T)^{\rm op}$. 
The purpose of this text is therefore to answer the following
questions:
\begin{itemize}
\item To what extend does
  ${\rm s.gl.dim.}\,A$ measure how far $A$ is from being
  quasi-tilted?
\item Is it possible to compute the strong global dimension or to
  characterise it?
\end{itemize}
The first main result of this text gives an answer in terms of Happel's
description of the bounded derived category of a hereditary abelian
category. Note that part (1) is a direct consequence of that description.
\begin{thm}
\label{thma}Let $\mathcal T$ be a triangulated category which is
triangle equivalent to the bounded derived category of a
hereditary abelian category.
Let $T\in\mathcal T$ be a tilting object. Assume that ${\rm
  End}(T)^{\rm op}$ is not a
  hereditary algebra. There exists a full and additive subcategory
  $\mathcal H\subseteq \mathcal T$ which is hereditary and abelian,
  such that the embedding $\mathcal H\hookrightarrow \mathcal T$
  extends to a triangle equivalence $\mathcal D^b(\mathcal H)\simeq
  \mathcal T$, and such that
  $T\in\bigvee_{i=0}^\ell\mathcal H[i]$
for some integer $\ell\geqslant 0$. Moreover
\begin{enumerate}
\item ${\rm s.gl.dim.}\,{\rm End}(T)^{\rm op}\leqslant\ell +2$ for any such
  pair $(\mathcal H,\ell)$, and
\item there exists such a pair $(\mathcal H,\ell)$ verifying ${\rm
    s.gl.dim.}\,{\rm End}(T)^{\rm op}=\ell +2$.
\end{enumerate}
\end{thm}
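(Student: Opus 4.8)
The plan is to transport the computation of ${\rm s.gl.dim.}$ into $\mathcal T$ and exploit Happel's description of $\mathcal D^b(\mathcal H)$. For the existence of the pair $(\mathcal H,\ell)$, fix any hereditary abelian $\mathcal H$ admitting a triangle equivalence $\mathcal D^b(\mathcal H)\simeq\mathcal T$; since $T$ is a bounded object it lies in finitely many shifts of $\mathcal H$, and after replacing $\mathcal H$ by a suitable $\mathcal H[k]$ one arranges $T\in\bigvee_{i=0}^\ell\mathcal H[i]$ with $\ell\geqslant 0$. I will repeatedly use two facts from Happel's description: as $\mathcal H$ is hereditary, every object of $\mathcal T$ is a direct sum of shifted stalks, so each indecomposable object has the form $M[i]$ with $M\in\mathcal H$ indecomposable; and ${\rm Hom}_{\mathcal T}(M[a],N[b])={\rm Ext}^{\,b-a}_{\mathcal H}(M,N)$, which vanishes unless $b-a\in\{0,1\}$. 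Set $A={\rm End}(T)^{\rm op}$, which has finite global dimension, fix a triangle equivalence $G\colon\mathcal D^b({\rm mod}\,A)\xrightarrow{\ \sim\ }\mathcal T$ with $G(A)=T$ and $G(P_j)=T_j$ on the indecomposable projectives, and identify $\mathcal D^b({\rm mod}\,A)\simeq K^b({\rm proj}\,A)$. Then ${\rm s.gl.dim.}\,A$ is the supremum of $s-r$ over all indecomposable minimal complexes $P^r\to\cdots\to P^s$ of projectives.

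For assertion (1), let $X$ be such an indecomposable minimal complex, with $G(X)=M[i]$. The brutal truncations at the two ends give a projection $X\to P^r[-r]$ and an inclusion $P^s[-s]\to X$ in $K^b({\rm proj}\,A)$; assuming $s>r$ (the case $s=r$ being trivial), both are nonzero, because minimality forces the outer differentials $d^r$ and $d^{s-1}$ to have image in the radical, hence to be respectively a nonsplit monomorphism and a nonsplit epimorphism. Applying $G$ yields nonzero maps $M[i]\to G(P^r)[-r]$ and $G(P^s)[-s]\to M[i]$. Now $G(P^r),G(P^s)\in{\rm add}\,T\subseteq\bigvee_{i=0}^\ell\mathcal H[i]$, so their indecomposable summands have the shape $M_a[i_a]$ with $0\leqslant i_a\leqslant\ell$. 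By the Hom-formula some summand $M_a[i_a-r]$ of $G(P^r)[-r]$ satisfies $i_a-r-i\in\{0,1\}$ and some summand $M_b[i_b-s]$ of $G(P^s)[-s]$ satisfies $i-i_b+s\in\{0,1\}$. Using $0\leqslant i_a,i_b\leqslant\ell$, the first relation gives $r\geqslant -i-1$ and the second gives $s\leqslant\ell-i+1$, whence $s-r\leqslant\ell+2$. Taking the supremum over $X$ proves (1).

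For assertion (2), observe first that (1) already shows that every admissible pair satisfies $\ell\geqslant{\rm s.gl.dim.}\,A-2$; hence it suffices to exhibit one hereditary heart for which the spread of $T$ is at most ${\rm s.gl.dim.}\,A-2$, since equality is then forced. The plan is a reduction argument on the spread. Starting from any $\mathcal H$ with spread $\ell>{\rm s.gl.dim.}\,A-2$, I will tilt the associated $t$-structure at a torsion pair of $\mathcal H$ concentrated on the outermost layer, that is, on the indecomposable summands $M_j$ of $T$ with $i_j=\ell$ (or dually with $i_j=0$), via a Happel--Reiten--Smal\o{} tilt $\mathcal H\rightsquigarrow\mathcal H'$; the effect is to move that extremal layer one step inward, so that the summands of $T$, which remain stalks with respect to the hereditary heart $\mathcal H'$, acquire a strictly smaller spread. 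Iterating, and invoking the lower bound from (1) to prevent undershooting, the spread stabilises exactly at ${\rm s.gl.dim.}\,A-2$.

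The main obstacle is this last step. One must select the torsion pair so that the tilted heart $\mathcal H'$ is again hereditary --- not every Happel--Reiten--Smal\o{} tilt of a hereditary category is hereditary --- that it still generates $\mathcal T$, and that the spread genuinely decreases instead of being merely redistributed. I expect to settle this through the Auslander--Reiten structure of $\mathcal H$, treating separately the two cases of the dichotomy recalled in the introduction: when $\mathcal H$ arises from a hereditary algebra one reflects the extremal layer using the preprojective and preinjective components and reflection (APR-type) tilts, whereas when $\mathcal H$ arises from a weighted projective line one works with the one-parameter families of tubes; in either case heredity of $\mathcal H'$ is verified from Happel's Hom-formula together with the splitting of objects into shifted stalks. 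The lower bound of (1) is precisely what guarantees that this reduction terminates at the optimal value $\ell={\rm s.gl.dim.}\,A-2$.
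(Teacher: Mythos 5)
Your treatment of assertion (1) is correct, and it is in substance the paper's own argument: the paper invokes the characterisation ${\rm s.gl.dim.}\,A=\sup\,\{\ell_T^+(X)-\ell_T^-(X)\}$ (recalled in \ref{strat1}) and then performs the same positional bookkeeping in $\bigvee_i\mathcal H[i]$ that you do, whereas you re-derive that characterisation by hand via brutal truncations of a minimal complex. (One slip of wording: minimality does not make $d^r$ a monomorphism; the correct statement is that a null-homotopy of the projection $X\to P^r[-r]$ would produce $h$ with $h\circ d^r={\rm id}_{P^r}$, impossible since ${\rm im}\,d^r\subseteq{\rm rad}\,P^{r+1}$. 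Your conclusion stands.) Your reduction of assertion (2) to producing a single heart whose spread is at most ${\rm s.gl.dim.}\,A-2$ is also sound, since any larger $\ell$ still satisfies $T\in\bigvee_{i=0}^{\ell}\mathcal H[i]$.

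For assertion (2) itself, however, the proposal has a genuine gap, which you in fact flag yourself (``the main obstacle is this last step'', ``I expect to settle this''). Your iteration rests on three unproved claims: (a) the Happel--Reiten--Smal\o{} tilt at the chosen torsion pair has a heart that is again hereditary and generating; (b) the spread of $T$ with respect to the new heart strictly decreases rather than being redistributed; and, decisively, (c) such a tilt \emph{exists} whenever the current spread exceeds ${\rm s.gl.dim.}\,A-2$. Given (1), claim (c) is logically equivalent to the theorem being proved, so the scheme as written assumes exactly what it must establish; the lower bound from (1) only prevents undershooting and supplies no mechanism for the iteration to continue while the spread is still too large --- nothing excludes the process getting stuck at a non-optimal hereditary heart. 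What fills this hole in the paper is precisely its technical core, of which the proposal has no counterpart: for each of the three possibilities for where $T$ starts, an explicit good heart is constructed (a slice of a transjective component whose sources are summands of $T$, \ref{position_AR1}; a weighted-projective-line heart with a single tube $\mathcal U$ such that $\mathcal U$ and $\mathcal U[\ell]$ carry the extremal summands of $T$, \ref{positions_AR4}; the analogous $\mathbb ZA_{\infty}$ situation), and then Serre-duality and sectional-path arguments (\ref{lowerbound_section}, \ref{lowerbound_regular}) exhibit an indecomposable $X$ with $\ell_T(X)\geqslant\ell+2$, i.e.\ the matching lower bound ${\rm s.gl.dim.}\,{\rm End}(T)^{\rm op}\geqslant\ell+2$ for that specific heart. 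Your reflection/tube heuristics point in this direction but prove none of it; note also that the iterative flavour of your plan is closer to the paper's proof of Theorem~\ref{thmb}, where however it is the tilting object, not the heart, that is mutated, and where those same lower-bound lemmas remain indispensable.
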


The second main result of this text gives an answer to the above
questions in terms of Happel, Rickard and Schofield's theorem and
using the concept of  \emph{tilting
mutations} in triangulated categories (\cite{MR2927802}): 
Let $T\in\mathcal T$ be a tilting object, let $T=T_1\oplus
T_2$ be a direct sum decomposition such that ${\rm
  Hom}(T_2,T_1)=0$ and consider  a triangle
$T_2'\to M\to T_2\to T_2'[1]$
where $M\to T_2$ is a minimal right ${\rm add}\,T_1$-approximation, then
 $T'=T_1\oplus T_2'$ is a tilting object and called
\emph{obtained from $T$ by tilting
mutation.} 
\begin{thm}
\label{thmb}Let $\mathcal T$ be a triangulated category which is
triangle equivalent to the bounded derived category of a
hereditary abelian category. 
  Let $T\in\mathcal T$ be a tilting object. Assume that ${\rm End}(T)^{\rm op}$ is not
  hereditary. Then there exists an integer $\ell\geqslant 0$ and a
  sequence $T^{(0)},T^{(1)},\ldots,T^{(\ell)}$ of tilting objects in
  $\mathcal T$ such that
  \begin{itemize}
  \item ${\rm End}(T^{(0)})^{\rm op}$ is a
    quasi-tilted algebra, $T^{(\ell)}=T$, and
  \item for every $i$ the object $T^{(i)}$ is obtained from
    $T^{(i-1)}$ by a tilting mutation.
  \end{itemize}
For any such sequence, ${\rm s.gl.dim.}\,{\rm End}(T)^{\rm
  op}\leqslant \ell+2$. Moreover, there exists such a sequence
such that  ${\rm s.gl.dim.}\,{\rm End}(T^{(i)})^{\rm
  op}=2+i$ for every 
  $i$ (and, in particular   ${\rm s.gl.dim.}\,{\rm
    End}(T)^{\rm op}=2+\ell$).
\end{thm}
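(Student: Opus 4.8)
The plan is to derive everything from Theorem~\ref{thma}, the involutivity of tilting mutation (\ref{setting_approx}, following \cite{MR2927802}), and the Happel--Zacharia characterisation ${\rm s.gl.dim.}=2\iff$ quasi-tilted and not hereditary \cite{MR2413349}. The single observation driving the argument concerns the effect of one mutation on the position of a tilting object relative to a hereditary heart. Since $\mathcal H$ is hereditary, an object $X\in\mathcal D^b(\mathcal H)$ lies in $\bigvee_{i=a}^{b}\mathcal H[i]$ if and only if its cohomologies vanish outside $[a,b]$ (because $X\simeq\bigoplus_i H^i(X)[-i]$), and such interval subcategories are closed under extensions. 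I would first record that if $T'$ is obtained from $T=T_1\oplus T_2$ by tilting mutation, with exchange triangle $T_2[-1]\to T_2'\to M\to T_2$ and $M\in{\rm add}\,T_1$, then $T\in\bigvee_{i=0}^{m}\mathcal H[i]$ forces $T_2'$, hence $T'$, into $\bigvee_{i=-1}^{m}\mathcal H[i]$; symmetrically the co-mutation yields $\bigvee_{i=0}^{m+1}\mathcal H[i]$. Thus a single tilting mutation enlarges the minimal width of the interval carrying a tilting object by at most one.

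For the existence of the sequence I apply Theorem~\ref{thma} to fix a pair $(\mathcal H,\ell)$ with $T\in\bigvee_{i=0}^{\ell}\mathcal H[i]$, set $T^{(\ell)}=T$, and peel off the top layer step by step: writing $T^{(j)}=U\oplus V$ with $V$ the summand in $\mathcal H[j]$ and $U$ the rest in $\bigvee_{i=0}^{j-1}\mathcal H[i]$, Happel's formula gives ${\rm Hom}(V,U)=0$, so the mutation (with minimal right ${\rm add}\,U$-approximation $M\to V$) is defined and, by the observation above, produces $T^{(j-1)}\in\bigvee_{i=0}^{j-1}\mathcal H[i]$. After $\ell$ steps $T^{(0)}\in\mathcal H$ is a tilting object of $\mathcal H$, so ${\rm End}(T^{(0)})^{\rm op}$ is quasi-tilted. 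Reading the chain backwards and invoking involutivity of tilting mutation, each $T^{(i)}$ is obtained from $T^{(i-1)}$ by a tilting mutation, as required.

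The upper bound for an arbitrary admissible sequence is then immediate: starting from $T^{(0)}$ of width $0$ and applying the width observation inductively (in either mutation direction), $T^{(i)}$ lies in an interval of length at most $i$ in a suitable shift of the heart of $T^{(0)}$. Theorem~\ref{thma}(1) converts this into ${\rm s.gl.dim.}\,{\rm End}(T^{(i)})^{\rm op}\leqslant i+2$ when the algebra is not hereditary, the bound being trivial otherwise; in particular ${\rm s.gl.dim.}\,{\rm End}(T)^{\rm op}\leqslant\ell+2$.

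For the optimal sequence I run the same peeling but starting from an optimal pair, i.e. one with ${\rm s.gl.dim.}\,{\rm End}(T)^{\rm op}=\ell+2$ provided by Theorem~\ref{thma}(2). Writing $m_i$ for the minimal width of $T^{(i)}$, the width observation gives both $m_i\leqslant i$ and $m_\ell\leqslant m_i+(\ell-i)$; since optimality forces $m_\ell=\ell$, these collapse to $m_i=i$ for all $i$. For $i\geqslant 1$ the minimal width is positive, so $T^{(i)}$ is not hereditary and Theorem~\ref{thma} yields ${\rm s.gl.dim.}\,{\rm End}(T^{(i)})^{\rm op}=m_i+2=i+2$. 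The delicate point, which I expect to be the main obstacle, is the base case $i=0$: one must show that the quasi-tilted algebra ${\rm End}(T^{(0)})^{\rm op}$ is not hereditary, so that its strong global dimension is $2$ and not $1$. Here the crude bound of Theorem~\ref{thma}(1) is insufficient, since it only gives $3$ from width $1$; the clean way is to establish the sharp per-step estimate that a tilting mutation changes the strong global dimension by at most one, equivalently that a single tilting mutation of a hereditary algebra is quasi-tilted. Granting this, ${\rm s.gl.dim.}\,{\rm End}(T^{(1)})^{\rm op}=3$ rules out $T^{(0)}$ hereditary, forcing ${\rm s.gl.dim.}\,{\rm End}(T^{(0)})^{\rm op}=2$. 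This sharp estimate has to be argued directly from the exchange triangle rather than from Theorem~\ref{thma}, and is the real technical content of the final assertion.
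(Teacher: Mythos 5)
Your proposal is correct in substance but follows a genuinely different route from the paper's. The paper proves existence of the optimal sequence by induction on $d={\rm s.gl.dim.}\,{\rm End}(T)^{\rm op}$, re-entering the Auslander--Reiten machinery behind Theorem~\ref{thma}: it chooses $\mathcal H$ according to whether $T$ starts in a transjective component, in a {\tube}, or in a {\za} (\ref{transjective}, \ref{position_AR5}, \ref{wild_sgldim}), mutates away the top layer $T_2\in\mathcal H[\ell]$, and checks via \ref{positions_lem1} and its variants that $T'$ again starts and ends in the right subcategories, so that the exact formulas of \ref{transjective}, \ref{tame_sgldim} and \ref{wild_sgldim} give ${\rm s.gl.dim.}\,{\rm End}(T')^{\rm op}=d-1$ on the nose. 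You instead treat Theorem~\ref{thma} as a black box and run a width bookkeeping on cohomological intervals; combining your observation (one mutation changes the minimal width by at most one) with both parts of Theorem~\ref{thma} yields the clean identity that for a non-hereditary endomorphism algebra the strong global dimension equals the minimal width plus two, which settles every $i\geqslant 1$ with no case analysis on Auslander--Reiten components. That is a real simplification, and your treatment of the upper bound in part (1) via the width argument plus \ref{upperbound} is also valid and even bypasses the per-step estimate the paper uses there.

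The one ingredient you leave unproved --- that a single tilting mutation changes the strong global dimension by at most one --- is exactly Proposition~\ref{sgldim_diff} of the paper, proved (as you predict, directly from the exchange triangle) via the length-comparison table of \ref{sgldim_tilting}, and used by the paper itself for part (1) of the theorem. So within the paper your argument closes completely by citing \ref{sgldim_diff}; as a stand-alone proof it has an acknowledged hole at precisely that point, and your remark that the crude width bound cannot fill it is accurate, since a mutation of a tilting object of width $0$ generally acquires width $1$ and hence only the bound $3$. One further caveat, which you share with the paper's own proof: your peeling produces $T^{(i-1)}$ from $T^{(i)}$ by the mutation of \ref{setting_approx}, whereas the statement asks for $T^{(i)}$ to be obtained from $T^{(i-1)}$; both arguments implicitly use that the operation is reversible (the inverse being the dual, left, mutation), so this directional looseness is harmless and common to both.
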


This theorem is related to the second fundamental
result mentioned above 
in the following way. Let $A$ and $H$ be algebras such that $H$ is
hereditary and $\mathcal D^b({\rm mod}\,A)\simeq \mathcal D^b({\rm
  mod}\,H)$. Assume that $A_0=H,\ldots,A_{\ell+1}=A$ is a
sequence of algebras such that  $A_i={\rm
  End}_{A_{i-1}}(M^{(i-1)})^{\rm op}$ for a tilting $A_{i-1}$-module
  $M^{(i-1)}$ for every $i$. Then there exist tilting objects
  $T^{(0)},\ldots, T^{(\ell)}$ in $\mathcal D^b({\rm mod}\,H)$ such that
  $A_i\simeq {\rm End}(T^{(i-1)})^{\rm op}$ for every $i$, and which
  correspond to the tilting modules $M^{(0)},\ldots,M^{(\ell)}$ under
  suitable triangle equivalences $\mathcal D^b({\rm mod}\,H)\simeq
  \mathcal D^b({\rm mod}\,A_i)$. Then  ${\rm End}(T^{(0)})^{\rm op}$ is
  tilted, and it follows from \cite[Thm. 4.2]{MR2578597} that ${\rm
    s.gl.dim.}\,{\rm End}(T^{(i-1)})^{\rm op}\leqslant i+2$ for every
  $i$. When $H$ is of finite representation type the sequence
  $A_0,\ldots,A_{\ell+1}$ may be chosen such that $M^{(i)}$ is an APR
  tilting module for every $i$. In such a situation $T^{(i)}$ is
  obtained from $T^{(i-1)}$ by a tilting mutation. From this point of
  view, Theorem~\ref{thmb} expresses the strong global dimension as
  the infimum of the number $\ell+2$ of terms in  all possible
  sequences $A_0,\ldots,A_{\ell+1}$. 

The proofs of Theorem~\ref{thma} and Theorem~\ref{thmb} are based on the
description of ${\rm s.gl.dim.}\,{\rm End}(T)^{\rm op}$ in terms of
the connected  components of the Auslander-Reiten  quiver
 of $\mathcal
T$ in which  specific direct summands of $T$ lie. The
Auslander-Reiten structure of $\mathcal T$ is described in \cite{MR935124}.

\medskip

The strategy of these proofs
is described in Section~\ref{section_strat}. In particular, the
structure of the text is given in \ref{start5}.
The text uses the following notation and general setup.
Keep $\k$ and $\mathcal T$ as set
  previously.
Given a full subcategory
  $\mathcal H$ of $\mathcal T$ which is hereditary, abelian and stable
  under taking direct summands, the embedding $\mathcal
  H\hookrightarrow \mathcal T$ extends to an equivalence of
  triangulated categories $\mathcal D^b(\mathcal H)\xrightarrow{\sim}
  \mathcal T$ if and only if $\mathcal H$ generates $\mathcal T$ as a
  triangulated category. Subcategories satisfying all these conditions
  are used frequently in this text. They are called \emph{{\hereds}}.
When $\mathcal H$ arises from a weighted projective
  line, the full
subcategory consisting of torsion objects (or torsion free objects) is
denoted by $\mathcal H_0$ (or $\mathcal H_+$, respectively).
Given $X,Y\in \mathcal T$, the space ${\rm
  Hom}_{\mathcal T}(X,Y)$ is denoted by ${\rm Hom}(X,Y)$, and
the more convenient notation ${\rm
  Ext}^i(X,Y)$ will stand  for ${\rm Hom}(X,Y[i])$
($i\in\mathbb Z$) whenever $X$ and
  $Y$ lie in a same {\hered}.
Given an additive category $\mathcal A$, the
class of indecomposable objects in $\mathcal A$ is denoted by ${\rm
  ind}\,\mathcal A$. The standard duality functor ${\rm Hom}(-,\k)$ is
denoted by $D$.
By an {\it Auslander-Reiten component} (of
$\mathcal T$) is meant a connected component of the Auslander-Reiten
quiver of $\mathcal T$. The Auslander-Reiten translation of $\mathcal
T$ is denoted by $\tau$.
By a {\it transjective} component is meant an
Auslander-Reiten component  which has only finitely many
$\tau$-orbits.
  Let $\mathcal A\subseteq \mathcal T$ be a full additive subcategory
  stable under taking direct summands. 
It is called a  \emph{one-parameter
    family of
  pairwise orthogonal tubes} if it is convex and its indecomposable
objects form a (disjoint) 
union of pairwise orthogonal tubes in the Auslander-Reiten quiver of
$\mathcal T$, and if it is maximal for these properties;
equivalently, there exists a {\hered} $\mathcal
H\subseteq \mathcal T$ arising from a weighted projective line and
such that $\mathcal A=\mathcal H_0$ (see \ref{tube_family}). Here two
tubes $\mathcal U,\mathcal V$ are called \emph{orthogonal} whenever 
${\rm Hom}(X,Y[i])=0$ for every $X\in\mathcal U$, $Y\in \mathcal V$,
$i\in \mathbb Z$.
On the other hand, $\mathcal A$ is called a
\emph{{\za}} if it is  convex
and  its  indecomposable objects form a (disjoint) union of
Auslander-Reiten components of
shape $\mathbb ZA_{\infty}$ in the Auslander-Reiten quiver of
$\mathcal T$, and if it is maximal for these properties. This is the
case  if and only if exactly
one of the following situations occurs for some  {\hered} $\mathcal H\subseteq \mathcal
T$
\begin{itemize}
\item $\mathcal H$ arises from a weighted projective line with
  negative Euler characteristic and $\mathcal A=\mathcal H_+$,
  \item or else, $\mathcal H$ arises from a hereditary algebra of wild
    representation type and $\mathcal A$ consists of the objects
    obtained as direct sums of
    regular indecomposable
    modules over that algebra.
\end{itemize}

The reader is referred to \cite[Chap. XIII]{MR2360503} and
\cite[Chap. XVII]{MR2197389} for a general
account on the Auslander-Reiten structure (tubes, quasi-simples,
components of shape $\mathbb ZA_\infty$) of hereditary algebras of
tame and wild representation types, respectively. Note that in this
text all tubes are stable.

\section{Overview of the proofs of the main theorems and structure of the text}
\label{section_strat}

\subsection{An alternative definition for the strong global dimension}
\label{strat1}

The whole proof makes use of the following characterisation of $\rm
s.gl.dim.$ due to \cite[Lem. 5.6]{MR2862193}. Let $T, X  \in \mathcal
T$.  
Define
$\ell_T^+(X),\ell_T^-(X)\in\mathbb Z\cup\{-\infty,+\infty\}$ as follows
\[
\left\{
\begin{array}{l}
  \ell_T^+(X)={\rm sup}\,\{n\in\mathbb Z\ |\ {\rm Hom}(X,T[n])\neq 0
  \}\,,\\
  \ell_T^-(X)={\rm inf}\,\{n\in\mathbb Z\ |\ {\rm
    Hom}(T[n],X)\neq 0\}\,.
\end{array}\right.
\]

\begin{prop*}[\cite{MR2862193}]
 Let
  $T\in\mathcal T$ be a tilting object. Let $A={\rm
    End}(T)^{\rm op}$. Then $-\infty<\ell^-_T(X)\leqslant \ell_T^+(X)<+\infty$ for every
  $X\in \mathcal T$ indecomposable, and
${\rm s.gl.dim.}\,A={\rm sup}\,\{\ell_T^+(X)-\ell_T^-(X)\ |\
    X\in{\rm ind}\,\mathcal T\}$.
\end{prop*}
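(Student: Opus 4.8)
The plan is to transport everything to the module side via the derived equivalence attached to $T$, and to read $\ell_T^\pm(X)$ off the minimal complex of projectives representing $X$. Since $A={\rm End}(T)^{\rm op}$ is piecewise hereditary it has finite global dimension, so $F={\rm RHom}(T,-)$ is a triangle equivalence $\mathcal T\xrightarrow{\sim}\mathcal D^b({\rm mod}\,A)\simeq K^b({\rm proj}\,A)$ carrying $T$ to the stalk complex $A$ and indecomposables to indecomposables. Fixing an indecomposable $X$, let $P^\bullet=FX$ be its minimal representative, with $P^k\neq 0$ exactly for $r\leqslant k\leqslant s$; then $\ell(FX)=s-r$ and ${\rm s.gl.dim.}\,A=\sup_X\ell(FX)$, the supremum running over indecomposables. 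Hence it suffices to prove that $\ell_T^-(X)=-s$ and $\ell_T^+(X)=-r$ for every indecomposable $X$: the finiteness and the inequality $\ell_T^-(X)\leqslant\ell_T^+(X)$ then follow from $r\leqslant s$, and taking suprema yields the claimed formula.

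The first step is to compute the two families of ${\rm Hom}$-spaces. As $A$ is projective, ${\rm Hom}(T[n],X)={\rm Hom}_{\mathcal D^b({\rm mod}\,A)}(A[n],FX)=H^{-n}(FX)$, so that $\{n\mid{\rm Hom}(T[n],X)\neq 0\}$ is the reflection across $0$ of the cohomology support of $P^\bullet$. Dually, because $P^\bullet$ is a complex of projectives one has ${\rm Hom}(X,T[n])={\rm Hom}_{K^b({\rm proj}\,A)}(P^\bullet,A[n])$, and applying the duality $(-)^\vee={\rm Hom}_A(-,A)\colon K^b({\rm proj}\,A)\to K^b({\rm proj}\,A^{\rm op})$ identifies this space with $H^n((P^\bullet)^\vee)$. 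Thus $\ell_T^+(X)$ is the top degree of the cohomology support of the dual complex $(P^\bullet)^\vee$, which is nonzero exactly in degrees $-s\leqslant k\leqslant -r$.

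The crux is then the elementary observation that a nonzero minimal complex of projectives has nonzero cohomology in its top degree: the top differential has image inside the radical of the top term, so its cokernel surjects onto a nonzero semisimple top. Applied to $P^\bullet$ this gives $H^s(FX)\neq 0$, hence $\ell_T^-(X)=-s$. Since $(-)^\vee$ is a duality sending radical maps to radical maps and reversing and negating degrees, $(P^\bullet)^\vee$ is again an indecomposable minimal complex, with top degree $-r$; the same observation gives $H^{-r}((P^\bullet)^\vee)\neq 0$, hence $\ell_T^+(X)=-r$. Combining, $\ell_T^+(X)-\ell_T^-(X)=s-r=\ell(FX)$, as required.

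The routine points are the bookkeeping of the two shift conventions in the identifications above; the only genuine input is that the extremal cohomology of a minimal complex of projectives is nonzero, and that this property is faithfully carried through the duality $(-)^\vee$. I expect the main obstacle to be phrasing these two extremal-cohomology computations cleanly enough that both the $\ell_T^-$ side (read from $P^\bullet$ itself) and the $\ell_T^+$ side (read from its dual) follow from a single lemma about minimal complexes, after which the passage to the supremum over ${\rm ind}\,\mathcal T$ is immediate.
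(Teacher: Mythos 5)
Your proof is correct. A point of comparison worth noting first: the paper itself does not prove this proposition at all; it quotes it from Happel--Zacharia \cite[Lem.\ 5.6]{MR2862193}, so there is no internal argument to match against. Your route --- transport along the tilting equivalence $F={\rm RHom}(T,-)$ (which carries $\mathcal T={\rm thick}(T)$ onto ${\rm thick}(A)=K^b({\rm proj}\,A)$ and $T$ to the stalk complex $A$), then read both invariants off the minimal complex $P^\bullet=FX$ --- is exactly the expected one, and all the key steps check out: ${\rm Hom}(T[n],X)\simeq H^{-n}(P^\bullet)$ because $A$ is a projective stalk complex; ${\rm Hom}(X,T[n])\simeq H^{n}\bigl((P^\bullet)^\vee\bigr)$ via the duality $(-)^\vee={\rm Hom}_A(-,A)$, which indeed preserves minimality since ${\rm rad}(P,Q)={\rm Hom}(P,{\rm rad}\,Q)$ for projectives and a contravariant equivalence preserves the categorical radical; and the extremal-cohomology lemma (cokernel of a radical differential surjects onto the nonzero top of $P^s$) gives $\ell^-_T(X)=-s$ and $\ell^+_T(X)=-r$, hence $\ell_T^+(X)-\ell_T^-(X)=s-r=\ell(FX)$ with both quantities finite and correctly ordered. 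Two cosmetic remarks: indecomposability of $(P^\bullet)^\vee$ is never needed (the lemma applies to any nonzero minimal complex), and the duality detour can be avoided by computing ${\rm Hom}_{K^b({\rm proj}\,A)}(P^\bullet,A[-r])$ directly --- a split projection of $P^r$ onto an indecomposable summand followed by the split inclusion of that summand into $A$ is not null-homotopic, precisely because every homotopy $hd^r$ has image in ${\rm rad}\,A$ by minimality. The only input you assert rather than prove is the standard tilting theorem making $F$ a triangle equivalence onto $K^b({\rm proj}\,A)$ (Keller, or Happel--Reiten--Smal\o{} in this hereditary setting); that is legitimate background here, just as the paper treats it.
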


In the sequel, if $T$ is a tilting object in
$\mathcal T$,
% a hereditary triangulated
% category $\mathcal T$ (in the sense of Ringel \cite{R12}),
and if $X\in{\rm ind}\,\mathcal T$, then
$\ell_T(X)$ denotes $\ell_T^+(X)-\ell_T^-(X)$. Note that
$\ell_T(X)=\ell_T(X[i])$ for every $X\in\mathcal T$ and $i\in\mathbb
Z$. Hence ${\rm s.gl.dim.}\,A={\rm sup}\,\{\ell_T(X)\ |\
    X\in{\rm ind}\,\mathcal T\,,\ \ell_T^-(X)=0\}$.

\subsection{Upper and lower bounds on the strong global
  dimension}
\label{upper_bound}\label{upperbound}

The starting point of the proof of Theorem~\ref{thma} is the following simple observation.  

\begin{prop*}
  Let $\mathcal H \subseteq \mathcal T$ be a hereditary abelian
  generating subcategory. Let
  $T\in\mathcal T$ be a tilting object. If
  $T\in\bigvee_{i=0}^\ell\mathcal H[i]$ for some $\ell \in \mathbb N$,
  then ${\rm s.gl.dim.}\,{\rm End}(T)^{\rm op}\leqslant \ell +2$.
\end{prop*}
\begin{proof}
  This follows directly from Happel's description of $\mathcal
  D^b(\mathcal H)$ and from the previous characterisation of the
  strong global dimension (see \ref{strat1}).
\end{proof}

Hence, the nontrivial part of Theorem~\ref{thma} is about lower bounds
on the strong global dimension. Proving that theorem amounts to finding
a {\hered} $\mathcal
H\subseteq\mathcal T$ such that $T\in \bigvee_{i=0}^\ell\mathcal H[i]$
and $\ell+2={\rm s.gl.dim.}\,{\rm End}(T)^{\rm op}$.
Should this hold true, whenever $X \in {\rm ind}\,{\mathcal T}$ is such that
$\ell_{T}(X) = {\rm s.gl.dim}\,{\rm End}(T)^{\rm op}$ and (up to shift)
$\ell^{-}_{T}(X) = 0$, then $X\in{\mathcal H}[\ell + 1]$,
and there should exist indecomposable summands $M_0, M_1$ of $T$ lying
respectively in $\mathcal H$ and ${\mathcal H}[\ell]$, together with
non-zero morphisms $M_1 \rightarrow X$ and $X \rightarrow
M_0[\ell + 2]$. In order to get lower bounds on ${\rm s.gl.dim.}\,{\rm
  End}(T)^{\rm op}$, one may therefore start from
$M_0$ and $M_1$ and look for $X$ and for nonzero morphisms $M_0\to X$
and $X\to M_0[\ell+2]$. Using the Auslander-Reiten structure of
$\mathcal T$, this is possible provided that $M_0$ and $M_1$ lie on
specific Auslander-Reiten components.

\subsection{The subcategories in which tilting objects start or end}
\label{strat4}

In view of the previous considerations, it is useful to have some
\emph{a priori} knowledge on the Auslander-Reiten components of
$\mathcal T$ containing indecomposable direct summands of $T$ lying in
$\mathcal H$ or $\mathcal H[\ell]$. In this text, these relevant
Auslander-Reiten components are determined in terms of 
subcategories in which $T$ {\it starts} or {\it ends.} Let $\mathcal
A\subseteq \mathcal T$ be a full, additive and convex subcategory stable
under taking direct summands. Here {\it convex} means that any path $X_0\to
\cdots \to X_n$ (of non-zero morphisms between indecomposable objects)
is contained in $\mathcal A$ as soon as $X_0,X_n\in\mathcal A$. In
this text $T$ is said to {\it start in $\mathcal A$} if the two
following conditions hold true
\begin{itemize}
\item $T$ has at least one indecomposable direct summand in $\mathcal
  A$,
\item for every indecomposable direct summand $X$ of $T$ there exists
  a path $X_0\to \cdots \to X_n$ such that $X_0\in\mathcal A$ and $X_n=X$.
\end{itemize}
Of course $T$ is said to {\it end in $\mathcal A$} if the dual
properties hold true.
There are obviously many
  such subcategories. This text concentrates on three specific
  ones,
namely
\begin{enumerate}[(1)]
\item when ${\rm ind}\,\mathcal A$ consists of a transjective Auslander-Reiten
  component (in which case $T$ is said to start in a transjective
  component, for short), or
\item when $\mathcal A$ is a one-parameter family of pairwise
  orthogonal tubes, or
\item when $\mathcal A$ is a {\za}.
\end{enumerate}

\subsection{Structure of the text}
\label{start5}

Consider a tilting object $T\in \mathcal
T$. Section~\ref{section_prel} establishes lower bounds on
the strong global dimension making hypotheses on the indecomposable
direct summands of $T$, as outlined in \ref{upper_bound}. For both
Theorem~\ref{thma} and Theorem~\ref{thmb}, the proofs reduce the
problem of determining ${\rm s.gl.dim.}\,{\rm End}(T)^{\rm op}$ by
replacing $T$ by an adequate tilting object obtained by tilting
mutation. In order to make a proper choice for that object,
Section~\ref{section_op} determines the subcategories in which a given
tilting mutation of $T$ may start or end. Once those subcategories are
known, it is necessary to determine the Auslander-Reiten components of
$\mathcal T$ that contain indecomposable direct summands of the
involved tilting objects. This task is performed in
Section~\ref{section_summandsAR}. Using all this information,
Section~\ref{section_ar} determines ${\rm s.gl.dim.}\,{\rm
  End}(T)^{\rm op}$ according to the subcategories in which $T$ starts
or ends. Finally, Section~\ref{section_pfs} proves Theorem~\ref{thma}
and Theorem~\ref{thmb}. Appendix~\ref{appendix_wpl} collects technical
material on bounded derived categories of weighted projective lines
needed for the proofs of the main theorems.

\section{Lower bounds on the strong global dimension}
\label{section_prel}

Given a tilting object $T\in \mathcal T$, this section gives  lower
bounds for ${\rm s.gl.dim.}\,{\rm 
  End}(T)^{\rm op}$. These form the technical heart of the proofs of
Theorem~\ref{thma} and Theorem~\ref{thmb}. They are
based on the existence of certain indecomposable direct summands of
$T$. A separate subsection is devoted to each one of these results
according to
the following situations: the considered summands lie in transjective
Auslander-Reiten components; or they lie in non-transjective 
Auslander-Reiten components; or they satisfy  specific vanishing assumptions on morphism
spaces.

% \subsection{An upper bound on the strong global dimension}
% \label{upperbound}

% \begin{prop*}
%   Let $\mathcal H\subseteq \mathcal T$ be a {\hered}. Let
%   $T\in\mathcal T$ be a tilting object. Assume that $\ell\geqslant 0$
%   is an integer such that
% \[
%  T\in\bigvee_{i=0}^\ell\mathcal H[i].
% \]
% Then ${\rm s.gl.dim.}\,{\rm End}(T)^{\rm op}\leqslant \ell +2$.
% \end{prop*}
% \begin{proof}
%   Let $X\in\mathcal T$ be indecomposable. Up to a shift there is no
%   loss of generality in assuming that $\ell_T^-(X)=0$. Let
%   $d=\ell_T^+(X)=\ell_T(X)$. Therefore there exist indecomposable
%   direct summands $T_1,T_2$ of $T$ such that
%   \[
%     {\rm Hom}(T_1,X)\neq 0\ \ {\rm and}\ \ {\rm Hom}(X,T_2[d])\neq 0\,.
%   \]
% Besides there exist integers $i\in\mathbb Z$ and
% $j,k\in\{0,\ldots,\ell\}$ such that
% \[
%   X\in\mathcal H[i]\,,\ \ T_1\in\mathcal H[j]\,,\ \ {\rm and}\ \ T_2\in\mathcal H[k]\,.
% \]
% Therefore $0\leqslant i-j\leqslant 1$ and $0\leqslant (d+k)-i\leqslant
% 1$, and hence
% \[
%   d\leqslant 1+i-k=1+\underset{\leqslant 1}{\underbrace{(i-j)}}
%   +\underset{\leqslant \ell}{\underbrace{(j-k)}}\leqslant \ell +2\,.
% \]
% \end{proof}

\subsection{Lower bounds using transjective Auslander-Reiten
  components}
\label{lowerbound_section}

The first result on lower bounds on the strong global dimension is
based on the existence of certain
indecomposable direct summands of $T$ lying in transjective
Auslander-Reiten components. The corresponding setting is as follows.

\medskip

Let $\Gamma$ be a transjective Auslander-Reiten component of $\mathcal
T$. Let $\Sigma$ be a  slice of $\Gamma$. Let
  $S_1,\ldots,S_n$ be the
sources of $\Sigma$. Let $\mathcal
H\subseteq\mathcal T$ be the full subcategory
\[
  \mathcal H=\{X\in \mathcal T\ |\ (\forall S\in \Sigma)\ (\forall
  i\neq 0)\ {\rm Hom}(S,X[i])=0\}\,.
\]
Let $T\in\mathcal T$ be a tilting object and let $\ell\geqslant 1$ be
an integer such that
\begin{itemize}
\item $S_1,\ldots,S_n$ are all indecomposable
  summands of $T$,
\item there exists an indecomposable summand $L$ of $T$ lying in
  $\mathcal H[\ell]$.
\end{itemize}

\begin{lem*}
Under the previous setting there exists an object  $M\in\tau^{-1}\Sigma[\ell
+1]$ together with nonzero
morphisms
$L\to M$ and $M\to  \bigoplus_{i=1}^nS_i[\ell +2]$.
Hence $\ell_T(M)\geqslant \ell+2$. In particular, ${\rm
  s.gl.dim.}\,{\rm End}(T)^{\rm op}\geqslant \ell +2$.
\end{lem*}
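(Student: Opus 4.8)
The plan is to rephrase the two required non-zero morphisms as non-vanishing of $\text{Hom}$- and $\text{Ext}^1$-spaces inside $\mathcal{H}$, and then to produce $M$ from the projective structure of $\mathcal{H}$ together with Auslander-Reiten (Serre) duality. The starting observation is that, $\Sigma$ being a slice of $\Gamma$, its objects form a tilting object of $\mathcal{T}$ with hereditary endomorphism algebra, and the defining condition of $\mathcal{H}$ exactly says that $\mathcal{H}$ is the heart of the induced $t$-structure, under which the indecomposable summands of $\Sigma$ become the indecomposable \emph{projective} objects of $\mathcal{H}$. Writing $L=L'[\ell]$ with $L'\in\mathcal{H}$, and looking for $M$ of the form $N[\ell+1]$ with $N\in\tau^{-1}\Sigma\cap\mathcal{H}$, Happel's description of morphisms turns the two desired maps into $\text{Hom}(L,M)=\text{Ext}^1_{\mathcal{H}}(L',N)$ and $\text{Hom}(M,\bigoplus_iS_i[\ell+2])=\bigoplus_i\text{Ext}^1_{\mathcal{H}}(N,S_i)$. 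So the whole statement reduces to exhibiting $N\in\tau^{-1}\Sigma\cap\mathcal{H}$ with $\text{Ext}^1_{\mathcal{H}}(L',N)\neq0$ and $\text{Ext}^1_{\mathcal{H}}(N,S_i)\neq0$ for some source $S_i$.

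I would first dispose of the second condition for good. Writing $N=\tau^{-1}P$ with $P\in\Sigma$, Auslander-Reiten duality gives $\text{Ext}^1_{\mathcal{H}}(N,S_i)\cong D\text{Hom}(S_i,\tau N)=D\text{Hom}(S_i,P)$. Since $\Sigma$ is the family of indecomposable projectives of the hereditary category $\mathcal{H}$ and its quiver is the quiver of $\text{End}(\Sigma)^{\rm op}$, every $P\in\Sigma$ is the target of a non-zero morphism from the projective sitting at some source of that quiver; hence $\text{Hom}(S_i,P)\neq0$ for at least one source $S_i$, \emph{whatever} $P$ is. Thus the second condition is free, and it only remains to choose a non-injective $P\in\Sigma$ with $\text{Ext}^1_{\mathcal{H}}(L',\tau^{-1}P)\neq0$.

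The core of the proof is this last choice. Applying Auslander-Reiten duality again and then the autoequivalence $\tau$, one has $\text{Ext}^1_{\mathcal{H}}(L',\tau^{-1}P)\cong D\text{Hom}(\tau^{-1}P,\tau L')\cong D\text{Hom}(P,\tau^2L')$, so I need a projective $P$ of $\mathcal{H}$ \emph{detecting} $\tau^2L'$; such a $P$ exists as soon as $\tau^2L'$ is a non-zero object of $\mathcal{H}$. Proving $\tau^2L'\in\mathcal{H}$ is exactly where the hypotheses $\ell\geqslant1$ and ``$T$ tilting'' come in: it amounts to showing that neither $L'$ nor $\tau L'$ is projective in $\mathcal{H}$. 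If $L'$ were projective, i.e. $L'\in\Sigma$, then $\text{Hom}(S_i,L')\neq0$ for some source $S_i$ by the reachability just used, which is a non-zero morphism between summands of $T$ in degree $-\ell\neq0$; if $\tau L'$ were projective, i.e. $L'\in\tau^{-1}\Sigma$, then $\text{Ext}^1_{\mathcal{H}}(L',S_i)\cong D\text{Hom}(S_i,\tau L')\neq0$ for some source $S_i$, a non-zero morphism between summands of $T$ in degree $\ell+1\neq0$. Each possibility contradicts $\text{Hom}(T,T[j])=0$ for $j\neq0$. Hence $\tau^2L'\in\mathcal{H}\setminus\{0\}$, and I pick $P\in\Sigma$ with $\text{Hom}(P,\tau^2L')\neq0$; this $P$ is automatically non-injective, since an injective $P$ would place $\tau^{-1}P$ in $\mathcal{H}[1]$ and force $\text{Hom}(\tau^{-1}P,\tau L')=0$, contradicting the choice. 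Then $N=\tau^{-1}P\in\mathcal{H}$ and $M=N[\ell+1]\in\tau^{-1}\Sigma[\ell+1]$ is the desired object.

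Finally, the two non-zero morphisms $L\to M$ and $M\to\bigoplus_iS_i[\ell+2]$, together with the fact that $L$ and the $S_i$ are direct summands of $T$, give $\ell_T^-(M)\leqslant0$ and $\ell_T^+(M)\geqslant\ell+2$, whence $\ell_T(M)\geqslant\ell+2$; the asserted inequality ${\rm s.gl.dim.}\,\text{End}(T)^{\rm op}\geqslant\ell+2$ then follows from the alternative description of the strong global dimension recalled in \ref{strat1}. I expect the genuine obstacle to be the third step, namely proving $\tau^2L'\in\mathcal{H}$ by ruling out the two boundary positions $L'\in\Sigma$ and $L'\in\tau^{-1}\Sigma$: this is the only point where one must translate the abstract tilting hypothesis on $T$ into the concrete vanishing that powers the Serre-duality computation, and it is also what forces the summand $M$ to live precisely in $\tau^{-1}\Sigma[\ell+1]$.
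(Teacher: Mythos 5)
Your proof is correct and is essentially the paper's own argument in Serre-dual form: the paper rules out $L\in\Sigma[\ell]\cup\tau^{-1}\Sigma[\ell]$ exactly as you rule out $L'\in\Sigma\cup\tau^{-1}\Sigma$, and then produces $M$ as an indecomposable injective of the abelian category $\tau^{-2}\mathcal H[\ell]$ (i.e.\ an object of $\tau^{-1}\Sigma[\ell+1]$) receiving a non-zero morphism from $L$, which under Serre duality is precisely your non-zero morphism $P\to\tau^{2}L'$ from an indecomposable projective of $\mathcal H$ (your extra check that $P$ is non-injective is the price of this dualization). The final step --- sources of the slice reach every vertex by sectional paths, then Serre duality yields the non-zero morphism $M\to\bigoplus_{i}S_i[\ell+2]$ --- is identical in both proofs.
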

\begin{proof}
  It is useful to prove first that $L\in\tau^{-2}\mathcal H[\ell]$. For
  this purpose note that
  \[
    \left({\rm ind}\,\mathcal H[\ell]\right)\backslash\left({\rm
        ind}\,\tau^{-2}\mathcal H[\ell]\right)=\Sigma[\ell]\cup
    \tau^{-1}\Sigma[\ell]\,,\ 
  \]
as sets of indecomposable objects. Since $L\in\mathcal H[\ell]$, the
claim therefore deals with 
proving that $L\not \in \Sigma[\ell]$ and $L\not\in
\tau^{-1}\Sigma[\ell]$. 
Using Serre duality and that $T$ is tilting
implies that
\[
  {\rm Hom}\left(\bigoplus_{i=1}^nS_i[\ell],L\right)=0\ \ {\rm and}\
    \ {\rm
      Hom}\left(\bigoplus_{i=1}^n\tau^{-1}S_i[\ell],L\right)=0\,.
\]
Since $S_1[\ell],\ldots,S_n[\ell]$ (or
$\tau^{-1}S_1[\ell],\ldots,\tau^{-1}S_n[\ell]$) are the sources of the
 slice $\Sigma[\ell]$ (or $\tau^{-1}\Sigma[\ell]$, respectively), this
entails that 
$L\not\in\Sigma[\ell]\ \ {\rm and}\ \ L\not\in\tau^{-1}\Sigma[\ell]$.
Thus
$L\in\tau^{-2}\mathcal H[\ell]$.

The category $\tau^{-2}\mathcal H[\ell]$ is abelian and its
indecomposable injectives are the objects in $\tau^{-1}\Sigma[\ell
+1]$, up to isomorphism. Hence
\[
  (\exists M\in\tau^{-1}\Sigma[\ell +1])\ \ {\rm Hom}(L,M)\neq 0\,.
\]
Besides ${\rm Hom}\left(\bigoplus_{i=1}^n\tau^{-1}S_i[\ell +1],M\right)\neq 0$ for
$\tau^{-1}S_i[\ell+1],\ldots,\tau^{-1}S_n[\ell+1]$ are the sources of
the  slice $\tau^{-1}\Sigma[\ell +1]$ of $\Gamma[\ell
+1]$. Serre duality then implies that
${\rm Hom}\left(M,\bigoplus_{i=1}^nS_i[\ell +2]\right)\neq 0$.
\end{proof}

\subsection{Lower bounds using non-transjective Auslander-Reiten
  components} 
\label{lowerbound_regular}

The second result on lower bounds on the strong global dimension is
based on the existence of certain
indecomposable direct summands of $T$ not lying in a transjective
Auslander-Reiten component. Here is the precise setting.

\medskip

Let $\mathcal H\subseteq \mathcal T$ be
{\hered}. Let $T\in \mathcal T$ be a
tilting object and $\ell\in \mathbb N$ be  such that there
exist indecomposable direct summands $M_0,M_1$ of $T$ satisfying the
following 
\begin{itemize}
\item $M_0\in\mathcal H$ and
\item   $M_1\in\mathcal H[\ell]$ and $M_1$ lies in a non-transjective
  Auslander-Reiten component of $\mathcal H[\ell]$. 
\end{itemize}
Recall that the
non-transjective Auslander-Reiten components are either tubes or of
 shape $\mathbb ZA_\infty$.
Let $X=\tau^{-1}M_0[\ell+1]$ and $Y=\tau
M_1[1]$. These lie in $\mathcal H[\ell+1]$.

\begin{lem*}
  Under the setting described 
  previously
  assume that
  both $M_0$ and 
  $M_1$ lie in non-transjective Auslander-Reiten components  of
  $\mathcal T$.
  \begin{enumerate}[(1)]
  \item If $\mathcal T$ contains a transjective Auslander-Reiten component then  ${\rm s.gl.dim.}\,{\rm End}(T)^{\rm op}\geqslant \ell +1$.
  \item If there exists a tube $\mathcal U\subseteq \mathcal T$ such
    that $M_0\in\mathcal U$ and $M_1\in \mathcal U[\ell]$ then ${\rm
      s.gl.dim.}\,{\rm End}(T)^{\rm op}\geqslant \ell+2$.
  \item If $\mathcal H$
    arises from a weighted projective line,
    $M_0\in
    \mathcal H_+$ and $M_1\in\mathcal H_0[\ell]$, then ${\rm
      s.gl.dim.}\,{\rm End}(T)^{\rm op}\geqslant \ell+2$.
  \item If  $M_0,M_1$ lie in Auslander-Reiten components of
    shape $\mathbb ZA_{\infty}$, then ${\rm s.gl.dim.}\,{\rm
      End}(T)^{\rm op}\geqslant
    \ell +2$.
  \end{enumerate}
\end{lem*}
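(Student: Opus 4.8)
The plan is to use the alternative description of the strong global dimension from \ref{strat1}: it suffices to produce, for each case, an indecomposable object $Z\in\mathcal T$ together with indecomposable direct summands $A,B$ of $T$ and an integer $d$ such that $\mathrm{Hom}(A,Z)\neq 0$ and $\mathrm{Hom}(Z,B[d])\neq 0$. Indeed, since $A$ and $B$ are summands of $T$ this forces $\ell_T^-(Z)\leqslant 0$ and $\ell_T^+(Z)\geqslant d$, hence $\ell_T(Z)\geqslant d$ and ${\rm s.gl.dim.}\,{\rm End}(T)^{\rm op}\geqslant d$. The common starting point is the pair $X=\tau^{-1}M_0[\ell+1]$ and $Y=\tau M_1[1]$ fixed in the setting. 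Writing the Serre functor of $\mathcal T$ as $\tau[1]$ and using $\mathrm{Hom}(U,V)\cong D\,\mathrm{Hom}(V,\tau U[1])$ together with the indecomposability of $M_0$ and $M_1$ (so that $\mathrm{End}(M_0)\neq 0$ and $\mathrm{End}(M_1)\neq 0$), I would first record the two universal non-zero morphisms
\[
\mathrm{Hom}(X,M_0[\ell+2])\cong D\,\mathrm{End}(M_0)\neq 0,\qquad \mathrm{Hom}(M_1,Y)\cong D\,\mathrm{End}(M_1)\neq 0.
\]
In particular $\ell_T^+(X)\geqslant \ell+2$ and $\ell_T^-(Y)\leqslant 0$, so one half of the desired estimate is automatic.

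For the sharp bound $\ell+2$ in parts $(2)$, $(3)$ and $(4)$ I would reduce all three cases to a single existence statement. Set $\bar M_1=M_1[-\ell]\in\mathcal H$. The aim is to find an indecomposable object $\bar Z\in\mathcal H$, lying in the relevant non-transjective family, such that
\[
\mathrm{Hom}(\bar Z,\tau\bar M_1)\neq 0\qquad\text{and}\qquad \mathrm{Hom}(M_0,\tau\bar Z)\neq 0.
\]
Indeed, putting $Z=\bar Z[\ell+1]\in\mathcal H[\ell+1]$ and applying Serre duality twice yields $\mathrm{Hom}(M_1,Z)\cong D\,\mathrm{Hom}(\bar Z,\tau\bar M_1)\neq 0$ and $\mathrm{Hom}(Z,M_0[\ell+2])\cong D\,\mathrm{Hom}(M_0,\tau\bar Z)\neq 0$, whence $\ell_T(Z)\geqslant \ell+2$ by the reduction of the first paragraph. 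Thus the whole problem becomes the construction of the connecting object $\bar Z$, which is where the particular geometry of each family enters.

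The construction of $\bar Z$ is the main obstacle, and I would treat it case by case using the Auslander-Reiten structure recalled before Section~\ref{section_strat} and the properties collected in Appendix~\ref{appendix_wpl}. In case $(2)$, where $M_0$ and $\bar M_1$ lie in a common stable tube $\mathcal U$, I would exploit the ray-and-coray morphism structure of $\mathcal U$: choosing $\bar Z\in\mathcal U$ of sufficiently large quasi-length, positioned so that $\tau\bar M_1$ is reachable from $\bar Z$ while $\tau\bar Z$ is reachable from $M_0$, simultaneously produces both non-zero morphisms. In case $(3)$, where $M_0\in\mathcal H_+$ is a bundle and $\bar M_1\in\mathcal H_0$ is a torsion object, I would take $\bar Z\in\mathcal H_0$ in the tube of $\bar M_1$ of large length: internal tube morphisms give $\mathrm{Hom}(\bar Z,\tau\bar M_1)\neq 0$, while $\mathrm{Hom}(M_0,\tau\bar Z)\neq 0$ follows because a non-zero bundle maps non-trivially to any torsion object of sufficiently large length in a prescribed tube. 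In case $(4)$, where both summands lie in components of shape $\mathbb ZA_\infty$ over a wild hereditary algebra or a weighted projective line of negative Euler characteristic, I would use the unbounded growth of the dimensions $\dim\mathrm{Hom}(U,\tau^{-k}V)$ to select $\bar Z$ far enough along the relevant rays so that both homomorphism spaces are non-zero.

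Finally, part $(1)$ asserts only the weaker bound $\ell+1$, and here no connecting object need exist, since $M_0$ and $M_1$ may sit in different components. Instead I would work directly with $X=\tau^{-1}M_0[\ell+1]$, for which $\ell_T^+(X)\geqslant \ell+2$ is already known, and bound $\ell_T^-(X)$ from above. Using the hypothesis that $\mathcal T$ possesses a transjective component---so that $\mathcal T$ is the derived category of a wild hereditary algebra or of a weighted projective line of negative Euler characteristic---together with the fact that a summand of $T$ in degree $\ell$ can be linked to $X$ only after a shift by one, I would establish $\ell_T^-(X)\leqslant 1$; the loss of one degree, compared with the direct morphism $M_1\to Z$ available in parts $(2)$--$(4)$, is exactly what accounts for the difference between $\ell+1$ here and $\ell+2$ there. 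Combining the two estimates gives $\ell_T(X)\geqslant \ell+1$ and hence ${\rm s.gl.dim.}\,{\rm End}(T)^{\rm op}\geqslant \ell+1$.
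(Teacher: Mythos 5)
Your global architecture is sound and is essentially the paper's: reduce everything to exhibiting one indecomposable $Z$ with $\mathrm{Hom}(M_1,Z)\neq 0$ and $\mathrm{Hom}(Z,M_0[\ell+2])\neq 0$, and your Serre-duality reformulation in terms of $\bar Z\in\mathcal H$ (i.e.\ $\mathrm{Hom}(\bar Z,\tau\bar M_1)\neq 0$ and $\mathrm{Hom}(M_0,\tau\bar Z)\neq 0$) is correct. Case $(2)$ is exactly the paper's ray--coray intersection argument, and case $(3)$ is also essentially the paper's proof, up to one imprecision: for $\bar Z$ of large quasi-length in the tube of $\bar M_1$, the non-vanishing of $\mathrm{Hom}(\bar Z,\tau\bar M_1)$ is \emph{not} automatic when $\tau\bar M_1$ has small quasi-length; you must also position the quasi-top of $\bar Z$ (equivalently, put $\bar Z$ on the coray through $\tau\bar M_1$), which is the same positioning you invoke in $(2)$, so this is repairable. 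The genuine gaps are in cases $(4)$ and $(1)$.

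In case $(4)$ the mechanism you cite does not exist in the form you state it. The asymptotic results actually available (Baer--Kerner for wild hereditary algebras, Lenzing--de la Pe\~na for weighted projective lines of negative Euler characteristic, which are what the paper quotes) give $\mathrm{Hom}(U,\tau^{k}V)\neq 0$ for all $k\gg 0$, i.e.\ non-vanishing into large \emph{positive} $\tau$-powers; the spaces $\mathrm{Hom}(U,\tau^{-k}V)$ do not grow, and in the weighted projective line case they vanish for $k\gg 0$ for slope reasons. This matters because no single $\bar Z$ chosen far along a $\tau$-orbit can satisfy both requirements: if $\bar Z=\tau^{k}W$ with $k\gg 0$ then $\mathrm{Hom}(M_0,\tau\bar Z)\neq 0$ is fine, but $\mathrm{Hom}(\bar Z,\tau\bar M_1)\cong\mathrm{Hom}(W,\tau^{-k+1}\bar M_1)$ is of the forbidden type, and symmetrically for $\bar Z=\tau^{-k}W$. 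The paper resolves this tension by taking \emph{two} objects far in opposite $\tau$-directions on the quasi-simple level, $\tau^{n}S$ and $\tau^{-n}S$, and then connecting them through the wing vertex $M$ of the $\mathbb ZA_{\infty}$ component by sectional paths, using that the arrows along the ray are monomorphisms and those along the coray are epimorphisms in $\mathcal H[\ell+1]$, so that the composites $X\to M$ and $M\to Y$ are non-zero. That connecting step is the heart of case $(4)$ and is absent from your sketch.

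In case $(1)$ your key claim $\ell_T^-(X)\leqslant 1$ is asserted, not proved, and it is precisely where the difficulty sits. Note first that your reading of the hypothesis is backwards: the presence of a transjective component forces $\mathcal T\simeq\mathcal D^b({\rm mod}\,\k Q)$ for a hereditary algebra (possibly \emph{tame}); it is the presence of $\mathbb ZA_{\infty}$ components that forces wild type or negative Euler characteristic. So the typical instance of $(1)$ is a tame hereditary algebra with $M_0$ and $M_1$ lying in two \emph{orthogonal} tubes; then every map from $M_0$, $M_1$, or any summand of $T$ lying in a tube orthogonal to that of $M_0$, into $X$ or $X[-1]$ vanishes, and there is no visible reason why some summand of $T$ should reach $X$ within shift one. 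The paper does not attempt to bound $\ell_T^-(X)$ at all: it abandons $X$ as test object and manufactures a transjective one, namely it chooses a slice $\Sigma$ in the transjective component all of whose objects map non-trivially to $M_0[\ell+1]$, forms the hereditary category $\mathcal H'$ attached to $\Sigma$ (whose indecomposable injectives are the slice objects and which contains $M_1$ as a regular object), and obtains $S_0\in\Sigma$ with $\mathrm{Hom}(M_1,S_0)\neq 0$ and $\mathrm{Hom}(S_0,M_0[\ell+1])\neq 0$, whence $\ell_T(S_0)\geqslant\ell+1$. Without this (or an actual proof of your inequality) part $(1)$ remains open in your proposal.
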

\begin{proof}
  $(1)$ Let $\mathcal C$ be the Auslander-Reiten component of $\mathcal
  T$ such that $M_0[\ell +1]\in\mathcal C$. Let $\Gamma$ be the unique
  transjective Auslander-Reiten component of $\mathcal T$ such that
  \[
    (\forall V\in\mathcal C)\ (\exists U\in\Gamma)\ \ \ {\rm
      Hom}(U,V)\neq 0\,.
  \]
Let $\mathcal R$ be the disjoint union of the non-transjective
Auslander-Reiten components
% erased to save space :
% of $\mathcal T$
such that
\[
  (\forall V\in\mathcal R)\ (\exists U\in \Gamma)\ \ \ {\rm
    Hom}(U,V)\neq 0\,.
\]
Therefore
\begin{itemize}
\item $\mathcal C\subseteq \mathcal R$,
\item $\mathcal R$ is the family of regular Auslander-Reiten
  components of $\mathcal H[\ell +1]$, and
\item $\mathcal R[-1]$ is the family of regular Auslander-Reiten
  components of $\mathcal H[\ell]$
and
  $M_1\in\mathcal R[-1]$.
\end{itemize}
In order to prove that ${\rm s.gl.dim.}\,{\rm End}(T)^{\rm op}\geqslant \ell+1$, it is
sufficient to find $S_0\in\Gamma$ such that
\[
  {\rm Hom}(M_1,S_0)\neq 0\ \ {\rm and}\ \ {\rm Hom}(S_0,M_0[\ell
  +1])\neq 0\,.
\]
First there exists a  slice $\Sigma$ in $\Gamma$ such that
\[
  (\forall S\in\Sigma)\ \ {\rm Hom}(S,M_0[\ell +1])\neq 0\,.
\]
Define
the full
subcategory $\mathcal H'\subseteq \mathcal T$ as
  $\mathcal H'=\{V\in\mathcal T\ |\ \text{${\rm
        Hom}(V,S[i])=0$ if $i\neq 0$ and $S\in \Sigma$}\}$.
Then
\begin{itemize}
\item $\mathcal H'$ is hereditary and abelian,
\item the indecomposable injectives of $\mathcal H'$ are the objects
  in $\Sigma$ up to isomorphism, and
\item $\mathcal R[-1]$ is the family of regular Auslander-Reiten
  components of $\mathcal H'$; in particular $M_1\in \mathcal H'$.
\end{itemize}
Thus
there exists $S_0\in\Sigma$ such that ${\rm Hom}(M_1,S_0)\neq
0$. By
hypothesis, ${\rm Hom}(S_0,M_0[\ell +1])\neq 0$.

\medskip

$(2)$ There exists a tube $\mathcal U\subseteq \mathcal T$ such that
$Y=\tau M_1[1]\in\mathcal U$ and $X=\tau^{-1}M_0[\ell+1]\in \mathcal U$.
Moreover there exist infinite sectional paths in $\mathcal U$
\[
  X\to \bullet\to \bullet\to \cdots\ \ {\rm and}\ \ \cdots
  \to \bullet\to \bullet\to Y\,.
\]
Since $\mathcal U$ is a tube, the two paths intersect. Hence there
exist $S\in\mathcal U$ and sectional paths in $\mathcal U$
\[
  \tau^{-1}M_0[\ell +1]\to \cdots\to S\ \ {\rm and}\ \ S\to \cdots
  \to \tau M_1[1]\,.
\]
Since the composition of morphisms along a sectional path does not
vanish, there exist nonzero morphisms
$\tau^{-1}M_0[\ell +1]\to S$ and $S\to\tau M_1[1]$.
Using Serre duality, this implies that
${\rm Hom}(S,M_0[\ell +2])\neq 0$ and ${\rm
    Hom}(M_1,S)\neq 0$.
Thus $\ell_T(S)\geqslant \ell +2$.

\medskip

$(3)$ Let $\mathcal U\subseteq\mathcal H_0[\ell+1]$ be the tube such
that $M_1[1]\in\mathcal U$. Applying \ref{morph} (part $(1)$) yields
an infinite sectional path $S=X_0\to \cdots\to X_n\to \cdots$ such
that
 ${\rm Hom}(\tau
^{-1}X_n, M_0[\ell+2])\neq 0$, for every $n\geqslant 0$. Since
$\mathcal U$ is a tube there also exists an infinite sectional path
$\cdots \to \bullet\to \bullet\to \tau^2 M_1[1]$, and the two paths
must intersect. Therefore there exists $n\geqslant 0$ together with a
sectional path $X_n\to \cdots\to \tau^2M_1[1]$. The composite morphism
$X_n\to \tau^2M_1[1]$ is thus nonzero. Using Serre duality entails
that ${\rm Hom}(M_1,\tau^{-1} X_n)\neq 0$. Thus
$\ell_T(\tau^{-1}X_n)\geqslant \ell+2$.

\medskip

$(4)$ The proof of the statement is better understood using the following diagram the
details of which are explained below and where all the arrows
represent nonzero morphisms.

\begin{center}
  \begin{tikzpicture}[scale=0.5]
    \node (x) [label={below:$X$}] at (-6,-2) {$\cdot$};
    
    \node (y) [label={below:$Y$}] at (14,-2) {$\cdot$};

    \draw[dotted] (-4,4) -- (0,0) node (smn) [label={left:{\tiny$\tau^{n} S$}}]
    {$\cdot$} -- (4,4)  node (m) [label={above:{\tiny $M$}}] {$\cdot$} --
    (8,0) node (spn) [label={right:{\tiny $\tau^{-n} S$}}] {$\cdot$} -- (12,4);
    
    \draw[dotted] (-1,1) -- (0,2) -- (2,0) -- (6,4) -- (8,2) -- (9,1);
      
    \draw[dotted] (-3,3) -- (-2,4) -- (0,2) -- (2,4) -- (6,0) -- (10,4)
    -- (11,3) ;
      
    \draw[dotted] (-2,2) -- (0,4) -- (4,0) node (s) [label={right:{\tiny
        $S$}},label={below:$\mathcal C$}] {$\cdot$} -- (8,4) -- (10,2);
      
    \draw [->] (0,0) to (1,1);
      
    \draw [->] (3,3) to (4,4);
    
    \draw [->] (4,4) to (5,3);
      
    \draw [->] (7,1) to (8,0);
  
    \draw [->] (x) to [out=0,in=270]  (smn);
    
    \draw [->] (spn) to [out=270,in=180] (y);
    
  \end{tikzpicture}
\end{center}

Note that 
$X=\tau^{-1}M_0[\ell +1]$ and $Y=\tau M_1[1]$ lie in $\mathcal H[\ell+1]$.
\begin{itemize}
\item $S$ is any quasi-simple object in any Auslander-Reiten component
  of $\mathcal H[\ell+1]$ with shape $\mathbb ZA_{\infty}$ and $\mathcal C$ is
  that Auslander-Reiten component,
\item $n$ is any integer large enough such that ${\rm
    Hom}(X,\tau^nS)\neq 0$ 
  and ${\rm Hom}(\tau^{-n}S,Y)\neq 0$ (see \cite[Chap. XVIII,
  2.6]{MR2382332} or 
  \cite[Prop. 10.1]{MR1439198} according to whether $\mathcal H$
  arises from a hereditary algebra of wild representation type or
  from a weighted projective line with negative Euler characteristic, respectively), whence the curved arrows in the
  diagram, 
\item $M$ is the (unique) object in $\mathcal C$ such that there exist
  sectional paths of irreducible morphisms $\tau^n S\to \cdots \to M$
  and $M\to \cdots \to \tau^{-n}S$ in $\mathcal H[\ell+1]$. The arrows in the former path  (or, in the latter path) are all
  monomorphisms (or, epimorphisms, respectively) in $\mathcal H[\ell+1]$.
\end{itemize}
Since the diagram lies in
$\mathcal H[\ell+1]$, the composite morphisms $X\to M$
and $M\to Y$ arising from the paths $X\to \tau^n S\to \cdots \to M$
and $M\to \cdots \to\tau^{-n}S\to Y$ are nonzero. Using Serre
duality and the definition of $X$ and $Y$ yields: ${\rm
  Hom}(M_1,M)\neq 0$ and ${\rm Hom}(X,M_0[\ell+2])\neq 0$.
Thus, $\ell_T(M)\geqslant \ell+2$.
\end{proof}

\subsection{Lower bounds using non-vanishing morphism spaces}

 The
setting is the same as in  \ref{lowerbound_regular}. In particular the
same notation ($M_0,M_1,X,Y$) is used here.
Let $Z\to M_1$ be a
minimal right  almost split morphism in $\mathcal T$. As usual
$M_1$ is called {\it quasi-simple} 
whenever $Z$ is indecomposable.

\begin{lem*}
  Under the setting described previously, the following hold
  true.
  \begin{enumerate}[(1)]
\item If ${\rm
    Hom}(X,Y)\neq 0$, then $\ell_T(\tau^{-1}M_0)\geqslant \ell +2$. In
  particular ${\rm s.gl.dim.}\,{\rm End}(T)^{\rm op}\geqslant \ell +2$.
  \item If ${\rm Ext}^1(Y,X)\neq 0$, then $\ell_T(\tau Z)\geqslant
    \ell +2$ or $\ell_T(\tau^2
    M_1)\geqslant\ell +2$
    according to whether $M_1$ is quasi-simple or not.
In particular, ${\rm s.gl.dim.}\,{\rm End}(T)^{\rm op}\geqslant \ell +2$.
\item If ${\rm
    Hom}(X,Y)=0$ and ${\rm Hom}(Y,X)\neq 0$, then $\ell_T(\tau
  Z)\geqslant \ell +1$ or $\ell_T(\tau^2
    M_1)\geqslant \ell +1$ according to whether $M_1$ is quasi-simple
    or is not. In particular ${\rm s.gl.dim.}\,{\rm End}(T)^{\rm op}\geqslant \ell +1$.
\item If ${\rm
    Ext}^1(X,Y)\neq 0$, then $\ell_T(\tau M_1)\geqslant \ell +1$. In
  particular ${\rm s.gl.dim.}\,{\rm End}(T)^{\rm op}\geqslant \ell +1$.
\end{enumerate}
\end{lem*}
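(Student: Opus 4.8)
The plan is to bound ${\rm s.gl.dim.}\,{\rm End}(T)^{\rm op}$ from below in each case by producing one indecomposable $W$ with $\ell_T(W)$ as large as required, using the reformulation $\ell_T(W)=\ell_T^+(W)-\ell_T^-(W)$. Since $M_0$ and $M_1$ are direct summands of $T$, any non-zero morphism $M_1[a]\to W$ yields $\ell_T^-(W)\leqslant a$ and any non-zero morphism $W\to M_0[b]$ yields $\ell_T^+(W)\geqslant b$, so in each case it suffices to exhibit one incoming morphism from a shift of $M_1$ and one outgoing morphism to a shift of $M_0$ whose suspensions differ by the desired amount. The tool that turns every hypothesis into such morphisms is Serre duality, written ${\rm Hom}(U,V)\cong D\,{\rm Hom}(V,\tau U[1])$, applied to $X=\tau^{-1}M_0[\ell+1]$ and $Y=\tau M_1[1]$; I also use repeatedly that $\ell_T$ is invariant under suspension.

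For $(1)$ and $(4)$ I would take $W=X$ and $W=Y$. Serre duality gives the permanent non-vanishings ${\rm Hom}(X,M_0[\ell+2])\cong D\,{\rm End}(M_0)\neq 0$ and ${\rm Hom}(M_1,Y)\cong D\,{\rm End}(M_1)\neq 0$, hence $\ell_T^+(X)\geqslant\ell+2$ and $\ell_T^-(Y)\leqslant 0$. The same duality rewrites ${\rm Hom}(X,Y)\neq 0$ as ${\rm Hom}(M_1,X)\neq 0$ (so $\ell_T^-(X)\leqslant 0$) and ${\rm Ext}^1(X,Y)\neq 0$ as ${\rm Hom}(Y,M_0[\ell+1])\neq 0$ (so $\ell_T^+(Y)\geqslant\ell+1$). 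Combining gives $\ell_T(\tau^{-1}M_0)=\ell_T(X)\geqslant\ell+2$ and $\ell_T(\tau M_1)=\ell_T(Y)\geqslant\ell+1$.

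For $(2)$ and $(3)$, two applications of Serre duality turn the hypotheses into ${\rm Hom}(\tau^2 M_1,M_0[\ell+1])\neq 0$ and ${\rm Hom}(\tau^2 M_1,M_0[\ell])\neq 0$, giving an outgoing morphism from $\tau^2 M_1$. The point is to raise $\ell_T^-$ by one, and here the quasi-simple dichotomy enters. If $M_1$ is not quasi-simple I keep $W=\tau^2 M_1$ and use ${\rm Hom}(M_1[-1],\tau^2 M_1)\cong D\,{\rm Hom}(\tau M_1,M_1)$ together with the component-theoretic fact that ${\rm Hom}(\tau M_1,M_1)\neq 0$ precisely when $M_1$ is not quasi-simple, whence $\ell_T^-(\tau^2 M_1)\leqslant -1$. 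If $M_1$ is quasi-simple then $Z$, hence $\tau Z$, is indecomposable and I take $W=\tau Z$: applying $\tau$ to the Auslander--Reiten triangle $\tau M_1\to Z\to M_1\to\tau M_1[1]$ produces the minimal left almost split morphism $\tau^2 M_1\to\tau Z$, through which the outgoing morphism from $\tau^2 M_1$ factors, yielding a morphism out of $\tau Z$; and ${\rm Hom}(M_1[-1],\tau Z)\cong D\,{\rm Hom}(Z,M_1)\neq 0$ (the right almost split morphism $Z\to M_1$ being non-zero) gives $\ell_T^-(\tau Z)\leqslant -1$. Counting suspensions then gives $\ell+2$ in $(2)$ and $\ell+1$ in $(3)$. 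For the factorisation to be legitimate the outgoing morphism must not be a split monomorphism, i.e.\ $\tau^2 M_1\not\cong M_0[\ell+1]$ in $(2)$ (automatic, since the two objects lie in the different suspensions $\mathcal H[\ell]$ and $\mathcal H[\ell+1]$) and $\tau^2 M_1\not\cong M_0[\ell]$ in $(3)$; the latter is exactly where the hypothesis ${\rm Hom}(X,Y)=0$ is used, since $\tau^2 M_1\cong M_0[\ell]$ would force $X\cong Y$ and hence ${\rm Hom}(X,Y)\neq 0$.

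I expect the main obstacle to be cases $(2)$ and $(3)$: keeping the iterated Serre-duality bookkeeping correct and, above all, establishing the mesh-combinatorial equivalence ${\rm Hom}(\tau M_1,M_1)\neq 0\iff M_1$ is not quasi-simple in a tube or a component of shape $\mathbb ZA_{\infty}$, together with the factorisation through the almost split morphism. I would therefore isolate this component-theoretic input first and then feed it into the otherwise formal homological computation.
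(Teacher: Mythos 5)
Your proposal is correct and follows essentially the same route as the paper's proof: the same Serre-duality computations in $(1)$ and $(4)$ (your working with $Y=\tau M_1[1]$ rather than $\tau M_1$ is equivalent by shift-invariance), and in $(2)$ and $(3)$ the same dichotomy, using ${\rm Hom}(\tau M_1,M_1)\neq 0$ for non-quasi-simple $M_1$ and, for quasi-simple $M_1$, factoring the radical morphism $\tau^2 M_1\to M_0[\ell+1]$ (resp.\ $M_0[\ell]$) through the minimal left almost split morphism $\tau^2 M_1\to\tau Z$, with the hypothesis ${\rm Hom}(X,Y)=0$ invoked exactly as in the paper to exclude $\tau^2 M_1\cong M_0[\ell]$. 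The ``component-theoretic input'' you flag as the main obstacle (an irreducible epimorphism down the coray followed by an irreducible monomorphism up the ray has non-zero composite) is precisely what the paper also uses, stated there without proof as a standard fact about stable tubes and $\mathbb ZA_\infty$ components.
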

\begin{proof}
  (1) Serre duality gives
  \[
    \left\{
      \begin{array}{l}
        
0\neq{\rm Hom}(X,Y)={\rm Hom}(\tau^{-1}M_0[\ell+1],\tau M_1[1])\simeq D{\rm
  Hom}(M_1,\tau^{-1}M_0[\ell +1])\,,\\

0\neq {\rm Hom}(\tau^{-1}M_0[\ell +1],\tau^{-1}M_0[\ell +1])\simeq D{\rm
  Hom}(\tau^{-1}M_0[\ell +1],M_0[\ell +2])\,.

      \end{array}\right.
  \]
Thus $\ell_T(\tau^{-1}M_0[\ell +1])\geqslant \ell +2$.

\medskip

(2) The hypothesis implies that
$0\neq  {\rm Hom}(\tau^2
  M_1,M_0[\ell +1])$, and
hence
$\ell^+_T(\tau^2M_1)\geqslant \ell +1$.

Assume first that $M_1$ is not quasi-simple. Therefore ${\rm Hom}(\tau
M_1,M_1)\neq 0$. Serre duality then implies
${\rm
    Hom}(M_1[-1],\tau^2 M_1)\neq 0$.
Hence $\ell^-_T(\tau^2 M_1)\leqslant -1$, and thus
$\ell_T(\tau^2 M_1)\geqslant \ell +2$.

Assume now that $M_1$ is quasi-simple. Since $M_1$ lies in an 
Auslander-Reiten component of $\mathcal H[\ell]$ which is a tube or of
 shape $\mathbb ZA_{\infty}$, there exists an
almost split triangle $\tau^2 M_1\to \tau Z\to \tau M_1\to
\tau^2M_1[1]$.
Since $M_0[\ell +1]\in\mathcal H[\ell+1]$ and $\tau^2 M_1\in\mathcal
H[\ell]$,
it follows that
\[
  0\neq {\rm Ext}^1(Y,X)={\rm Hom}(\tau^2 M_1,M_0[\ell +1])\subseteq
  {\rm rad}(\tau^2 M_1,M_0[\ell +1])\,.
\]
In particular there exists a nonzero morphism $\tau^2 M_1\to
M_0[\ell +1]$ which factors through $\tau^2 M_1\to \tau Z$. Hence ${\rm
  Hom}(\tau Z,M_0[\ell +1])\neq 0$, and therefore
$\ell_T^+(\tau Z)\geqslant \ell +1$.
Moreover, using Serre duality yields
\[
  0\neq {\rm Hom}(\tau Z,\tau M_1)\simeq D {\rm Hom}(M_1[-1],\tau Z)\,.
\]
Hence $\ell_T^-(\tau Z)\leqslant -1$, and thus $\ell_T(\tau
Z)\geqslant \ell +2$.

\medskip

(3) The hypotheses imply that
$0\neq {\rm Hom}(Y,X)={\rm Hom}(\tau^2 M_1,M_0[\ell])$.
Hence
$\ell_T^+(\tau^2 M_1)\geqslant \ell$.

Assume first that $M_1$ is not quasi-simple. The argument used in (2)
to study $\ell_T^-(\tau^2 M_1)$
also applies here and shows that
$\ell_T^-(\tau^2 M_1)\leqslant -1$.
Thus $\ell _T(\tau^2 M_1)\geqslant \ell +1$.

Assume now that $M_1$ is quasi-simple. It follows from the hypotheses that
\[
  0={\rm Hom}(X,Y)={\rm Hom}(\tau^{-1}M_0[\ell +1],\tau M_1[1])={\rm
    Hom}(M_0[\ell ],\tau^2 M_1)\,.
\]
In particular $M_0[\ell]\not\simeq \tau^2 M_1$, and therefore (see above)
\[
  0\neq {\rm Hom}(Y,X)={\rm Hom}(\tau^2 M_1,M_0[\ell])\subseteq {\rm rad}(\tau^2
  M_1,M_0[\ell])\,.
\]
Hence there exists a nonzero morphism $\tau^2 M_1\to M_0[\ell]$ which
factors through $\tau^2 M_1\to \tau Z$. Therefore ${\rm
  Hom}(\tau Z,M_0[\ell])\neq 0$, and thus
$\ell_T^+(\tau Z)\geqslant \ell$.
The arguments used in (2) to prove that $\ell_T^-(\tau Z)\leqslant -1$
also apply here. Thus $\ell_T(\tau Z)\geqslant \ell +1$.

\medskip

(4) Serre duality gives
\[
  \left\{
    \begin{array}{l}
      0\neq {\rm Ext}^1(X,Y)={\rm Hom}(\tau^{-1}M_0[\ell +1],\tau
      M_1[2])\simeq D {\rm Hom}(\tau M_1, M_0[\ell])\,,\\
      0\neq {\rm Hom}(M_1,M_1)\simeq D{\rm Hom}(M_1[-1], \tau M_1)\,.
    \end{array}\right.
\]
Thus $\ell_T(\tau M_1)\geqslant \ell +1$.
\end{proof}

\section{Tilting mutations}
\label{section_op}

The proofs of Theorem~\ref{thma} and that of Theorem~\ref{thmb} use
inductions
based on
tilting
mutation. The inductive step produces a  new
tilting object $T'$ from the given tilting object $T$, such that ${\rm
  End}(T')^{\rm op}$ has strong
global dimension smaller than that of ${\rm End}(T)^{\rm op}$. 
This section therefore checks that tilting mutation permits a
convenient use of the lower bounds presented in
Section~\ref{section_prel}.
It
proceeds as follows.
\begin{itemize}
\item \ref{setting_approx} checks that $T'$ is indeed tilting.
\item \ref{sgldim_tilting} compares the lengths of a given object
  $X\in\mathcal T$ with respect to $T$ and $T'$. This leads to a
  comparison of the strong
  global dimensions of ${\rm End}(T)^{\rm op}$ and ${\rm End}(T')^{\rm
    op}$.
\item \ref{lem_approx} locates the indecomposable direct summands of
  $T'$ in terms of convex subcategories of $\mathcal T$ defined by
  indecomposable direct summands of $T$. This 
  serves to \ref{positions}.
\item In view of applying the results of
  Section~\ref{section_prel} to both $T$ and $T'$, \ref{positions}
  compares the subcategories in which $T$ and $T'$ start.
\end{itemize}

\subsection{Setting for the section}
\label{setting_approx}

 Let
$T\in\mathcal T$ be tilting. Consider a
direct sum decomposition
$T=T_1\oplus T_2$ such that ${\rm Hom}(T_2,T_1)=0$. Let $M\to T_2$ be a minimal right  ${\rm
  add}\,T_1$-approximation. It fits into a triangle
\begin{equation}
 \label{delta}
 T_2'\to M\to T_2\to T_2'[1]\,.\tag{$\Delta$}
\end{equation}
Let $T'=T_1\oplus T_2'$.
The following result is fundamental in this work. It is an application
of
\cite[Theorem 2.31 and Theorem 2.32 (b)]{MR2927802}
 since ${\rm
  add}(T_1)=\mu^-({\rm add}\,T,{\rm add}\,T_1)$ (with the notation
introduced therein).

\begin{prop*}
  Under the previous setting, $T'$ is a tilting object in $\mathcal T$.
\end{prop*}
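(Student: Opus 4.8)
The plan is to verify the hypotheses of the cited mutation theorem from \cite{MR2927802} and to read off the conclusion. Recall the setting: $T=T_1\oplus T_2$ is a tilting object with ${\rm Hom}(T_2,T_1)=0$, the morphism $M\to T_2$ is a minimal right ${\rm add}\,T_1$-approximation, and $T_2'$ is defined by the triangle $(\Delta)$. The key structural observation, already noted in the excerpt, is that ${\rm add}(T_1)=\mu^-({\rm add}\,T,{\rm add}\,T_1)$, so that $T_1$ plays the role of the ``kept'' summand and $T_2$ the ``mutated'' one in the formalism of loc.\ cit. Since $T$ is tilting in $\mathcal T$, the subcategory ${\rm add}\,T$ is a (silting, hence tilting) subcategory of the triangulated category $\mathcal T$, and ${\rm add}\,T_1\subseteq {\rm add}\,T$ is a full additive subcategory closed under direct summands; this is exactly the data required to form the left mutation.

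First I would check that the triangle $(\Delta)$ is precisely the approximation triangle used to define $\mu^-$ in \cite{MR2927802}: there the mutated summand is obtained by taking a minimal right ${\rm add}\,T_1$-approximation $M\to T_2$ and completing it to a triangle, whose shifted cone $T_2'$ replaces $T_2$. Thus $T'={\rm add}\,T_1\oplus{\rm add}\,T_2'$ is, by construction, the image $\mu^-({\rm add}\,T,{\rm add}\,T_1)$ in the sense of that paper. Next I would invoke \cite[Theorem 2.31]{MR2927802}, which guarantees that the left mutation of a silting subcategory is again silting, i.e.\ ${\rm Hom}(T',T'[i])=0$ for all $i>0$ and $T'$ generates $\mathcal T$. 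The extra hypothesis ${\rm Hom}(T_2,T_1)=0$ is what upgrades the silting conclusion to a genuine tilting one: it forces the negative ${\rm Hom}$-groups to vanish as well, which is the content of \cite[Theorem 2.32 (b)]{MR2927802}. Concretely, applying ${\rm Hom}(-,T_1)$ and ${\rm Hom}(T_1,-)$ to $(\Delta)$, together with the approximation property and the vanishing ${\rm Hom}(T_2,T_1)=0$, yields ${\rm Hom}(T',T'[i])=0$ for every $i\neq 0$.

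The remaining point is to confirm that $T'$ is a tilting object in the precise sense used in this text, namely an object with ${\rm Hom}(T',T'[i])=0$ for $i\in\mathbb Z\setminus\{0\}$ that generates $\mathcal T$ as a triangulated category. The generation property is immediate from the triangle $(\Delta)$: since $T_2$ lies in the triangulated subcategory generated by $T_1$ and $T_2'$, the subcategory generated by $T'=T_1\oplus T_2'$ contains $T=T_1\oplus T_2$, hence equals $\mathcal T$. The ${\rm Hom}$-orthogonality for all nonzero degrees is exactly what the two cited theorems deliver once one has matched up the notation, so the proof reduces to this bookkeeping. I expect the main (and essentially only) obstacle to be a faithful translation between the mutation formalism of \cite{MR2927802}---where the statements are phrased for silting subcategories and their left mutations---and the summand-wise language used here; once the identification ${\rm add}(T_1)=\mu^-({\rm add}\,T,{\rm add}\,T_1)$ is granted, as it is in the excerpt, the conclusion is a direct citation and no genuinely new argument is required.
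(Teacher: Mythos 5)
Your proposal is correct in substance, but it follows the route that the paper mentions and then deliberately does not take: you reduce the statement to the citation of \cite{MR2927802}, while the paper, after making exactly the same remark about Theorems 2.31 and 2.32(b), writes out a self-contained verification precisely because ``the point of view and notation in \cite{MR2927802} are slightly different.'' The two arguments share the generation step (the triangle $(\Delta)$ shows that the smallest triangulated subcategory containing $T'$ and stable under direct summands contains $T_2$, hence $T$, hence is all of $\mathcal T$); they differ on the ${\rm Hom}$-vanishing, which the paper establishes by four explicit functor applications to $(\Delta)$: ${\rm Hom}(T_1,-)$ gives ${\rm Hom}(T_1,T_2'[i])=0$, ${\rm Hom}(-,T_1[i])$ gives ${\rm Hom}(T_2',T_1[i])=0$, ${\rm Hom}(T_2,-)$ gives ${\rm Hom}(T_2,T_2'[i+1])=0$ (this is where ${\rm Hom}(T_2,T_1)=0$ enters, through the term ${\rm Hom}(T_2,M[i+1])$ at $i=-1$, since $M\in{\rm add}\,T_1$), and finally ${\rm Hom}(-,T_2')$ gives ${\rm Hom}(T_2',T_2'[i])=0$. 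Your route buys brevity; the paper's buys self-containedness and makes visible exactly where each hypothesis (tilting, minimal approximation, ${\rm Hom}(T_2,T_1)=0$) is used.

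Two points need repair, though neither is fatal since you ultimately lean on the cited theorems rather than on your asides. First, ``silting, hence tilting'' is backwards: tilting implies silting, not conversely, and what you need is only that ${\rm add}\,T$ is a silting subcategory. Second, the claim that applying ${\rm Hom}(-,T_1)$ and ${\rm Hom}(T_1,-)$ to $(\Delta)$ already yields ${\rm Hom}(T',T'[i])=0$ for all $i\neq 0$ is not accurate: those two applications say nothing about the self-extensions ${\rm Hom}(T_2',T_2'[i])$, which require the two further steps recorded above (first ${\rm Hom}(T_2,-)$, then ${\rm Hom}(-,T_2')$). Relatedly, the bookkeeping you describe as routine is precisely where care is needed: the identification ${\rm add}(T_1)=\mu^-({\rm add}\,T,{\rm add}\,T_1)$, which you take as granted from the paper, cannot be literally right, since the mutation of ${\rm add}\,T$ should be ${\rm add}\,T'$ rather than ${\rm add}\,T_1$; translating the conventions of \cite{MR2927802} faithfully is exactly the work that the paper's explicit proof is designed to replace.
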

\begin{proof}
Since the point of view and
notation in \cite{MR2927802} are slightly different from the ones in
this text a
proof is given below for the convenience of the reader.
  Because of \ref{delta}, the smallest triangulated subcategory of
  $\mathcal T$ containing $T'$ and stable under direct summands is
  $\mathcal T$. Hence is suffices to prove that ${\rm
    Hom}(T',T'[i])=0$ for every $i\neq 0$. Almost every argument below
  uses that $T$ is tilting so this will be implicit. Let $i\in\mathbb Z$.

First ${\rm Hom}(T_1,T_1[i])=0$ if $i\neq 0$ because $T_1\in{\rm
  add}(T)$.

Next there is an exact sequence obtained by applying ${\rm
  Hom}(T_1,-)$ to \ref{delta}
\[
  {\rm Hom}(T_1,M[i-1])\to {\rm Hom}(T_1,T_2[i-1])\to {\rm
    Hom}(T_1,T_2'[i])\to {\rm Hom}(T_1,M[i])\,.
\]
Since   $M\to T_2$ is an
${\rm add}\,T_1$-approximation it follows that
${\rm Hom}(T_1,T_2'[i])=0$ if $i\neq 0$.

Next there is an exact sequence obtained by applying ${\rm
  Hom}(-,T_1[i])$ to \ref{delta}
\[
  {\rm Hom}(M,T_1[i])\to {\rm Hom}(T_2',T_1[i])\to {\rm
    Hom}(T_2,T_1[i+1])\,.
\]
Since  
${\rm Hom}(T_2,T_1)=0$ it follows that
${\rm Hom}(T_2',T_1[i])=0$ if $i\neq 0$.

Next there is an exact sequence obtained by applying ${\rm
  Hom}(T_2,-)$ to \ref{delta}
\[
  \underset{\text{$=0$ if $i\neq 0$}}{\underbrace{{\rm
        Hom}(T_2,T_2[i])}}\to {\rm Hom}(T_2,T_2'[i+1])\to
  \underset{=0}{\underbrace{{\rm Hom}(T_2,M[i+1])}}
\]
where the rightmost term is zero if $i=-1$ by assumption on the
decomposition $T=T_1\oplus T_2$. Hence ${\rm Hom}(T_2,T_2'[i+1])=0$ if
$i\neq 0$. Therefore  the exact sequence obtained by
applying ${\rm Hom}(-,T_2')$ to (\ref{delta})
\[
  \underset{\text{$=0$ if $i\neq 0$ (see above)}}{\underbrace{{\rm Hom}(M,T_2'[i])}} \to
  {\rm Hom}(T_2',T_2'[i]) \to
  \underset{\text{$=0$ if $i\neq 0$}}{\underbrace{{\rm Hom}(T_2,T_2'[i+1])}}
\]
entails that ${\rm Hom}(T_2',T_2'[i])=0$ if $i\neq 0$.
All these considerations prove that $T'$ is tilting.
\end{proof}

\subsection{Behaviour of the strong global dimension under tilting
  mutations}
\label{sgldim_tilting}\label{sgldim_diff}

It is natural to ask for a relationship between $\ell_T(X)$ and
$\ell_{T'}(X)$. The following table gives an answer assuming (up to
suspension) that $\ell_{T'}^-(X)=0$, 
and denoting $\ell_{T'}(X)$ by $\ell$.

\begin{center}
\tiny
  \begin{tabularx}{\textwidth}{|>{\centering}X|>{\centering}X|>{\centering\arraybackslash}X|}
    
    \cline{2-3}

    \multicolumn{1}{X|}{} & & \\

    \multicolumn{1}{X|}{} & ${\rm
        Hom}(T_2,X[1])\neq 0$ & ${\rm
        Hom}(T_2,X[1])=0$\\
    
    \multicolumn{1}{X|}{} & & \\

    \hline

    & & \\

    ${\rm Hom}(X,T_1[\ell])\neq 0$ &  $\ell^-_{T}(X)=-1$,
    $\ell_{T}^+(X)=\ell$, and $\ell_{T}(X)=\ell+1$ &  $\ell^-_{T}(X)=0$,
    $\ell_{T}^+(X)=\ell$, and $\ell_{T}(X)=\ell$ \\

    & & \\

    \hline

    & & \\

     ${\rm Hom}(X,T_1[\ell])= 0$ &  $\ell^-_{T}(X)=-1$,
    $\ell_{T}^+(X)=\ell-1$, and $\ell_{T}(X)=\ell$ &  $\ell^-_{T}(X)=0$,
    $\ell_{T}^+(X)=\ell-1$, and $\ell_{T}(X)=\ell-1$ \\

    & & \\

    \hline

  \end{tabularx}
\normalsize
\end{center}

Indeed, examining the long exact
sequence obtained upon applying ${\rm Hom}(-,X)$ to $\Delta$ yields
that
\begin{itemize}
\item ${\rm Hom}(T_1[i],X)={\rm Hom}(M[i],X)=0$ if $i<0$ and ${\rm
    Hom}(T_2[i],X)=0$ if $i<-1$,
\item $\ell_T^-(X)\geqslant -1$, with an equality if and only if ${\rm
    Hom}(T_2[-1],X)\neq 0$,
\item if ${\rm Hom}(T_2[-1],X)=0$, then ${\rm Hom}(T_1,X)\neq 0$, and
  hence $\ell_T^-(X)=0$;
\end{itemize}
Using dual considerations yields that
\begin{itemize}
\item ${\rm Hom}(X, T_1[i])={\rm Hom}(X,M[i])= {\rm
    Hom}(X,T_2[i]) = 0$ if $i>\ell$,
\item $\ell_T^-(X)\leqslant \ell$, with an equality if and only if ${\rm
    Hom}(X,T_1[\ell])\neq 0$,
\item if ${\rm Hom}(X,T_1[\ell])=0$, then ${\rm Hom}(X,T_2[\ell-1])\neq 0$, and
  hence $\ell_T^+(X)=\ell-1$.
\end{itemize}

Using that table, the following is immediate (keeping the setting of \ref{setting_approx}).
\begin{prop*}
  $\vert {\rm s.gl.dim.}\,{\rm End}(T)^{\rm op}-{\rm s.gl.dim.}\,{\rm
      End}(T')^{\rm op}\vert\leqslant 1$.
  \end{prop*}

\subsection{Indecomposable direct summands and tilting mutation}
\label{lem_approx}

In view of comparing the subcategories of $\mathcal T$ in which $T$
starts or ends to the corresponding ones for $T'$, it is useful to
locate the indecomposable direct summands of $T'$ with respect to those of $T$. This is done in the following result.

\begin{lem*}
Let $X'\to N\to X\to X'[1]$ be a triangle in $\mathcal T$. The following conditions
are equivalent.
\begin{enumerate}[(i)]
\item $X$ is an indecomposable direct summand of $T_2$ and $N\to X$ is
  a right minimal ${\rm add}\,T_1$-approximation.
\item $X'$ is an indecomposable direct summand of $T_2'$ and $X'\to N$
  is a left minimal ${\rm add}\,T_1$-approximation.
\end{enumerate}
Moreover when these conditions are satisfied the object $X'$ lies in
the full and convex subcategory of $\mathcal T$ generated by $X[-1]$
and the indecomposable direct summands of $N$.
\end{lem*}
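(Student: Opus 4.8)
The plan is to first reduce to the basic case and split the approximation triangle $(\Delta)$, then treat the two equivalent conditions by one symmetric argument, and finally read off the convexity statement from the rotated triangle. I would assume without loss of generality that $T$ is basic and write $T_2=\bigoplus_{j=1}^m X_j$ into pairwise non-isomorphic indecomposables. Since $M\to T_2$ is a \emph{minimal} right ${\rm add}\,T_1$-approximation and $\mathcal T$ is Krull--Schmidt, this approximation splits as $\bigoplus_j(M_j\to X_j)$ with each $M_j\to X_j$ a minimal right ${\rm add}\,T_1$-approximation; accordingly $(\Delta)$ decomposes into triangles $\Delta_j\colon X_j'\to M_j\to X_j\to X_j'[1]$ with $T_2'=\bigoplus_j X_j'$. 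Granting this, condition (i) says exactly that $X\cong X_j$ for some $j$ and, minimal right approximations being unique up to isomorphism, that the given triangle is isomorphic to $\Delta_j$; hence $X'\cong X_j'$ is a direct summand of $T_2'$, and the content left to prove in the direction (i)$\Rightarrow$(ii) is that $f\colon X'\to N$ is a minimal left ${\rm add}\,T_1$-approximation and that $X'$ is indecomposable.

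For the approximation property I would, for any $T_1'\in{\rm add}\,T_1$ and any $u\colon X'\to T_1'$, apply ${\rm Hom}(-,T_1')$ to the triangle rotated as $X[-1]\to X'\xrightarrow{f} N\to X$, obtaining exactness of
\[
{\rm Hom}(N,T_1')\xrightarrow{\,\circ f\,}{\rm Hom}(X',T_1')\longrightarrow {\rm Hom}(X[-1],T_1')={\rm Hom}(X,T_1'[1]).
\]
The right-hand group vanishes because $X,T_1'\in{\rm add}\,T$ and $T$ is tilting, so $u$ factors through $f$; as $N\in{\rm add}\,T_1$, this makes $f$ a left ${\rm add}\,T_1$-approximation. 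Minimality of $f$ is the first subtle point: if $f$ were not left minimal, then $N$ would have a nonzero summand $N^0$ on which $f$ has zero component, forcing the triangle to split off a trivial summand $0\to N^0\xrightarrow{{\rm id}} N^0\to 0$ and hence $X\cong N^0\in{\rm add}\,T_1$, which is impossible since $X$ is a summand of $T_2$ and ${\rm Hom}(T_2,T_1)=0$. Indecomposability of $X'$ is the second, and I would get it from the invariance of the number of summands: $T$ and $T'$ are basic tilting objects, so each has exactly $n={\rm rank}\,K_0(\mathcal T)$ pairwise non-isomorphic indecomposable summands, of which $T_1$ contributes the same $n-m$ to both; the $m$ objects $X_1',\dots,X_m'$ must therefore be indecomposable, pairwise non-isomorphic, and outside ${\rm add}\,T_1$.

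The converse (ii)$\Rightarrow$(i) I would obtain by the dual argument: starting from an indecomposable summand $X'$ of $T_2'$ with $X'\xrightarrow{f} N$ a minimal left ${\rm add}\,T_1$-approximation, the same matching identifies $X\cong X_j$ as a summand of $T_2$, and applying ${\rm Hom}(T_1',-)$ to the rotated triangle shows $g\colon N\to X$ is a right ${\rm add}\,T_1$-approximation, this time using ${\rm Hom}(T_1',X'[1])={\rm Hom}(T_1,T_2'[1])=0$, which holds because $T'$ is tilting (the vanishing computed in \ref{setting_approx}); minimality of $g$ and indecomposability of $X$ are handled exactly as above with the roles of $T_1,T_2,T_2'$ interchanged.

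For the \emph{moreover} statement I would use the rotated triangle $X[-1]\xrightarrow{a} X'\xrightarrow{f} N\xrightarrow{g} X$. When $N\neq 0$ the approximation $g$ is not a split epimorphism (otherwise $X\in{\rm add}\,T_1$), so the triangle is non-split and both $a$ and $f$ are nonzero; choosing an indecomposable summand $N_k$ of $N$ with nonzero component $X'\to N_k$ yields a path $X[-1]\xrightarrow{a} X'\to N_k$ of nonzero morphisms between indecomposables whose endpoints are among the chosen generators, so convexity places $X'$ in the subcategory generated by $X[-1]$ and the indecomposable summands of $N$; the degenerate case $N=0$ is trivial since then $X'\cong X[-1]$ is itself a generator. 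I expect the genuine difficulty to lie entirely in the minimality of $f$ and the indecomposability of $X'$ — the approximation property being immediate from the tilting vanishing — as these are the only places where the \emph{minimality} of $M\to T_2$ and the rigidity of tilting objects are really used.
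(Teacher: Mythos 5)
Most of your proposal is sound, and in places nicer than the paper's own argument. The summand-wise splitting of the minimal approximation, the left-approximation property via ${\rm Hom}(X[-1],T_1)=0$, the converse direction via uniqueness of minimal approximations, and the convexity statement all work. Your proof of left minimality of $f\colon X'\to N$ (splitting off a summand $N^0$ of $N$ on which $f$ vanishes, so that $X\cong N^0\in{\rm add}\,T_1$, contradicting ${\rm Hom}(T_2,T_1)=0$) is cleaner than the paper's, which instead takes $u\in{\rm End}(N)$ with $uf=f$, induces $v\in{\rm End}(X)$, and uses non-splitness of the triangle together with indecomposability of $X$ to see that $v$, hence $u$, is invertible.

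The genuine gap is the indecomposability of $X'$, which cannot be obtained by counting isomorphism classes of summands. First, the assertion that $T'$ is basic is circular: it presupposes exactly what is to be proved about the $X_j'$. This can be repaired, since any tilting object (basic or not) has exactly $n={\rm rank}\,K_0(\mathcal T)$ pairwise non-isomorphic indecomposable summands --- a true but standard fact the paper never invokes --- yet even then the count does not close the argument: with $m=2$, the scenario $X_1'\cong Y_1\oplus Y_2$, $X_2'\cong Y_2$, with $Y_1\not\cong Y_2$ indecomposable and not in ${\rm add}\,T_1$, contributes exactly $m=2$ new isomorphism classes, so $T'$ still has exactly $n$ classes; it is even consistent with $K_0$-considerations, the multiplicity matrix $\left(\begin{smallmatrix}1&1\\0&1\end{smallmatrix}\right)$ being unimodular. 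So counting would additionally require you to exclude summands of $T_2'$ lying in ${\rm add}\,T_1$ and, more seriously, repeated or shared indecomposable summands among the $X_j'$, and the latter needs a direct argument, not arithmetic. The paper's proof supplies exactly this: given an idempotent $e\in{\rm End}(X')$, the vanishing ${\rm Hom}(X[-1],N)=0$ (because $T$ is tilting) allows one to extend $e$ to a morphism of triangles $(e,f,g)$; if $g$ is invertible, right minimality of $N\to X$ forces $f$, hence $e$, to be invertible, so $e=1$; if $g$ is not invertible it is nilpotent (as $X$ is indecomposable), and applying the same reasoning to $(1-e,1-f^n,1-g^n=1)$ gives $e=0$. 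You would need to incorporate an argument of this kind for your proof to be complete.
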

\begin{proof}
Assume $(i)$. Let $X\to T_2$ and $T_2\to X$ be 
morphisms  such that the composite morphism $X\to T_2\to X$ is
identity. Since $N\to X$ and $M\to T_2$ are ${\rm
  add}\,T_1$-approximations, there are commutative diagrams whose
rows are triangles
\[
  \xymatrix{
X' \ar[r] \ar[d] & N \ar[r] \ar[d] & X \ar[r] \ar[d] & X'[1] \ar[d]\\
T_2' \ar[r]  & M \ar[r] & T_2 \ar[r]  & T_2'[1]}
\ \ {\rm and}\ \ 
\xymatrix{
T_2' \ar[r] \ar[d] & M \ar[r] \ar[d] & T_2 \ar[r] \ar[d] & T_2'[1]
\ar[d]\\
X' \ar[r] & N \ar[r] &  X \ar[r] & X'[1]\,.
}
\]
Since $N\to X$ is right minimal and since the composite morphism $X\to 
T_2\to X$ is identity it follows that  the composite morphism
$N\to M\to N$ is an isomorphism, and hence so is  the composite
morphism $X'\to T_2'\to
X'$. This proves that $X'$ is a direct summand of $T_2'$.

\medskip

In order to prove that $X'$ is indecomposable, let $e\in {\rm End}(X')$
be an idempotent.
Since ${\rm
  Hom}(X[-1],N)=0$ ($T$ is tilting),
 there exist $f\in{\rm End}(N)$ and $g\in{\rm
  End}(X)$ making the following diagram commute
\[
  \xymatrix{
X' \ar[r] \ar[d]_e& N \ar[r] \ar[d]_f & X \ar[r] \ar[d]_g & X'[1]
\ar[d]_{e[1]}\\
X' \ar[r] & N \ar[r] & X\ar[r] & X'[1]\,.
}
\]
If $g$ is invertible then so is $f$ because $N\to X$ is a right
minimal ${\rm add}(T_1)$-approximation;
therefore $e$ is invertible, and hence $e=1_X$. If $g$ is
non-invertible then there exists $n\geqslant 1$ such that $g^n=0$
because $X$ is indecomposable; therefore the previous argument applies
to $(1-e^n=1-e,1-f^n,1-g^n=1)$ instead of to the triple $(e,f,g)$; it
entails that $e=0$. This proves that $X'$ is indecomposable.

\medskip

The functor ${\rm Hom}(-,T_1)$ applies to the triangle $X'\to N\to
X\to X'[1]$ and gives an exact sequence
\[
  {\rm Hom}(N,T_1)\to {\rm Hom}(X',T_1)\to {\rm Hom}(X[-1],T_1)
\]
where the rightmost term is zero because $T$ is tilting. Thus $X'\to
N$ is a left ${\rm add}\,T_1$-approximation.

\medskip

Finally let $u\in{\rm End}(N)$ be such that the composite
morphism $X'\to N\xrightarrow{u} N$ equals $X'\to N$. Therefore there
exists $v\in {\rm End}(X)$ such that the following diagram commutes
for every $n\geqslant 1$
\[
  \xymatrix{
X' \ar@{=}[d] \ar[r] & N \ar[r] \ar[d]_{u^n} & X \ar[r] \ar[d]_{v^n} &
X'[1] \ar@{=}[d]\\
X' \ar[r] & N \ar[r] & X \ar[r] & X'[1]\,.
}
\]
Since $N\in{\rm add}\,T_1$ and $X\in{\rm add}\,T_2$, the objects $N$
and $X\oplus X'$ are not isomorphic. Therefore, the triangle $X'\to
N\to X\to X'[1]$ does not split, and hence $X\to X'[1]$ is
nonzero. Therefore, $v^n\neq 0$ for
every $n\geqslant 1$. Since $X$ is indecomposable, it follows that
$v\colon X\to X$ is an isomorphism, and hence so is $u$. Thus,  $X'\to
N$ is left minimal. Therefore $(i)\Rightarrow (ii)$. 
The proof of the converse implication is obtained using dual considerations.

\medskip

Assume that $(i)$ and $(ii)$ hold true. Let $\mathcal C$ be the full
and convex subcategory of $\mathcal T$ generated by $X[-1]$ and the
indecomposable direct summands of $N$. Note that $X[-1]\to X'$ is
nonzero as observed earlier. If $N=0$ then $X'=X[-1]$, and hence
$X'\in \mathcal C$. If $N\neq 0$ then for every indecomposable direct
summand $Z$ of $N$ and for every retraction $N\twoheadrightarrow Z$
the composite morphism $X'\to N\twoheadrightarrow Z$ is nonzero
(\cite[Lemma 1.2, part $(ii)$]{MR2413349}). These morphisms together
with $X[-1]\to X'$ show that $X'\in\mathcal C$
\end{proof}

\subsection{Starting (or ending) subcategories and tilting mutation}
\label{positions}

In view of 
 Section~\ref{section_prel}, consider subcategories
 $\mathcal A,\mathcal B\subseteq
 \mathcal T$ such that $T$ starts in $\mathcal A\vee \mathcal B$ and
 ends in $\mathcal B[\ell]$ or in $\mathcal A[\ell]$ (for some
 $\ell$). A description of the subcategories in which $T'$ starts and
 ends is made
 \ref{positions_lem1}. And \ref{positions_lem2} and
 \ref{positions_lem3}  concentrate on the situations
 where $T$ starts in a {\tube} and
 ends in the suspension of it, or, more generally, in the $\ell$-th
 suspension of it, respectively.

\subsubsection{}
\label{positions_lem1}

  Let $\mathcal A,\mathcal B\subseteq \mathcal T$ be full, additive
  and convex   subcategories stable under taking direct summands and such that
  \begin{enumerate}[(a)]
  \item $\mathcal T=\bigvee_{i\in \mathbb Z}\left(\mathcal
      A[i]\vee\mathcal B[i]\right)$,
  \item ${\rm Hom}(\mathcal A,\mathcal A[i])=0$  and  ${\rm
      Hom}(\mathcal B,\mathcal B[i])=0$ if $i\neq 0,1$,
  \item ${\rm Hom}(\mathcal A,\mathcal B[i])=0$ if $i\neq 0$ and ${\rm
      Hom}(\mathcal B,\mathcal A[i])=0$ if $i\neq 1$. 
  \end{enumerate}
Typical examples used later on are as follows.
\begin{enumerate}[(1)]
\item $\mathcal A={0}$ and $\mathcal B=\mathcal H$ (where $\mathcal
  H\subseteq \mathcal T$
  is a {\hered}).
\item $\mathcal A=\mathcal H_+$ and $\mathcal B=\mathcal H_0$
  (assuming additionally that $\mathcal H$
arises from a weighted projective line).
\item $\mathcal A=\mathcal H_0[-1]$ and $\mathcal B=\mathcal H_+$
under the same additional assumption.
\item ${\rm ind}\,\mathcal A$ consists of the transjective
  Auslander-Reiten component of
  $\mathcal T$ intersecting $\mathcal H$, and ${\rm ind}\,\mathcal B=({\rm
    ind}\,\mathcal H\backslash {\rm ind}\,\mathcal A)$ (assuming
  additionally that
  $\mathcal H$ arises from a  hereditary algebra of wild
  representation type).
\end{enumerate}

\begin{lem*}
  Under the above setup let $\ell\geqslant 1$
  be an integer. Assume
  that $T$ starts in $\mathcal A\vee \mathcal B$ and ends in $\mathcal 
  B[\ell]$.
Let $T=T_1\oplus T_2$ be the direct sum decomposition such that $T_1\in \mathcal
A\vee \mathcal B\vee\cdots \vee \mathcal A[\ell -1]\vee \mathcal B[\ell
-1]$ and $T_2\in \mathcal A[\ell ]\vee \mathcal B[\ell]$.
Then $T'$ starts in $\mathcal A\vee \mathcal B$ and ends in $\mathcal
B[\ell -1]$, and $T$ and $T'$ have the same indecomposable direct
summands in $\mathcal A\vee \mathcal B$ when $l\geqslant 2$.
\end{lem*}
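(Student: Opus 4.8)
The plan is to treat $\mathcal H':=\mathcal A\vee\mathcal B$ as a \hered{} (which it is by hypotheses (a)--(c): it generates $\mathcal T$, is closed under summands, and $\mathrm{Hom}(\mathcal H',\mathcal H'[i])=0$ for $i\neq 0,1$), so that $\mathcal T\simeq\mathcal D^b(\mathcal H')$, every indecomposable of $\mathcal T$ is a stalk $\bar Y[j]$ with $\bar Y\in\mathrm{ind}\,\mathcal H'$, and $(\mathcal B,\mathcal A)$ is a \emph{split} torsion pair of $\mathcal H'$ (indeed $\mathrm{Hom}(\mathcal B,\mathcal A)=0$ and $\mathrm{Ext}^1(\mathcal A,\mathcal B)=\mathrm{Hom}(\mathcal A,\mathcal B[1])=0$ by (c)). Conditions (b)--(c) then refine the stripe of $\mathcal H'$ to a linear order on pieces $\cdots\prec\mathcal A[i]\prec\mathcal B[i]\prec\mathcal A[i+1]\prec\cdots$ along which a nonzero morphism between indecomposables never moves strictly to the left, with a prescribed bounded reach. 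First I would check $\mathrm{Hom}(T_2,T_1)=0$: as $T_2\in\mathcal A[\ell]\vee\mathcal B[\ell]$ and $T_1\in\bigvee_{j=0}^{\ell-1}(\mathcal A[j]\vee\mathcal B[j])$, every relevant space has the form $\mathrm{Hom}(\mathcal A\text{ or }\mathcal B,\mathcal A[m]\text{ or }\mathcal B[m])$ with $m\leq-1$, hence vanishes by (b)--(c). Thus the setting of \ref{setting_approx} applies and $T'$ is tilting.

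Next I would locate the summands of $T_2'$. Applying \ref{lem_approx} to each indecomposable summand $X$ of $T_2$ (lying in box $\ell$), the matching summand $X'$ of $T_2'$ lies in the convex subcategory generated by $X[-1]$ and the indecomposable summands of its minimal right $\mathrm{add}\,T_1$-approximation $N$; moreover the morphisms $X[-1]\to X'$ and $X'\to Z$ (for each summand $Z$ of $N$) are nonzero. Since $X[-1]$ sits in box $\ell-1$ and each $Z$ in a box $\leq\ell-1$, the no-left-move rule pins $X'$ to box $\ell-1$ exactly; applying the same rule inside box $\ell-1$ and using the torsion pair refines this to $X'\in\mathcal B[\ell-1]$ whenever $X\in\mathcal B[\ell]$. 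Hence $T'\in\bigvee_{i=0}^{\ell-1}(\mathcal A[i]\vee\mathcal B[i])$, and since $T$ ends in $\mathcal B[\ell]$ it has a summand there, whose mutation supplies a summand of $T'$ in $\mathcal B[\ell-1]$.

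The book-keeping assertions are then immediate. When $\ell\geq 2$ every summand of $T_2'$ lies in box $\ell-1\geq 1$, so the summands of $T'$ in box $0=\mathcal A\vee\mathcal B$ are exactly those of $T_1$, which coincide with the box-$0$ summands of $T=T_1\oplus T_2$; this proves the final assertion and, in particular, exhibits a summand of $T'$ in $\mathcal A\vee\mathcal B$. Together with the $\mathcal B[\ell-1]$-summand found above, this establishes the non-emptiness clause in both ``$T'$ starts in $\mathcal A\vee\mathcal B$'' and ``$T'$ ends in $\mathcal B[\ell-1]$''.

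The residual, and hardest, task is the reachability clause: each summand of $T'$ must admit a path from $\mathcal A\vee\mathcal B$ and a path into $\mathcal B[\ell-1]$. For summands coming from $T_1$ the start-condition is inherited verbatim from ``$T$ starts in $\mathcal A\vee\mathcal B$'', while the end-condition forces one to reroute a path that $T$ sent into $\mathcal B[\ell]$ down into $\mathcal B[\ell-1]$; for the new summands $X'$ one must instead manufacture paths from the explicit nonzero morphisms of \ref{lem_approx}. The genuine difficulty is purely at the level of pieces. For the end-condition, a morphism landing \emph{directly} in $\mathcal B[\ell]$ out of box $\ell-1$ is forced to originate in $\mathcal B[\ell-1]$ (the only piece of that box reaching $\mathcal B[\ell]$ in one step), which reroutes most end-paths through $\mathcal B[\ell-1]$; the stubborn case, in which $\mathcal B[\ell]$ is reached only after passing through $\mathcal A[\ell]$, is where the real work concentrates, since an object of $\mathcal A[\ell-1]$ need not map to $\mathcal B[\ell-1]$, and I expect to dispose of it using convexity of $\mathcal B$ and the split torsion pair. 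For the start-condition the obstacle is the shift mismatch: the only incoming morphism to $X'$ produced by the triangle is $X[-1]\to X'$, which exhibits $X'$ as reachable from box $-1$ rather than box $0$; here I would build a path downwards into $X'$ recursively from the defining extension $0\to\bar X\to\bar X'\to Q\to 0$ in $\mathcal H'$, showing that $\bar X'$ receives extensions from box-$0$-reachable objects. This piece-level routing is the step I expect to be the main obstacle.
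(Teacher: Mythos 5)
Everything you actually prove is correct and coincides, step for step, with the paper's own proof. The paper begins by asserting that it suffices to prove that $T_2'\in\mathcal A[\ell-1]\vee\mathcal B[\ell-1]$ and that $T_2'$ has at least one indecomposable direct summand in $\mathcal B[\ell-1]$, and then proves exactly these two positional facts the way you do: via \ref{lem_approx}, each indecomposable summand $X'$ of $T_2'$ sits in the convex hull of $X[-1]$ and the summands of $N$; right minimality of $N\to X$ together with the Hom-constraints (b)--(c) pins $N$ (hence $X'$) into box $\ell-1$, and into $\mathcal B[\ell-1]$ when $X\in\mathcal B[\ell]$. Your preliminary verification that $\mathrm{Hom}(T_2,T_1)=0$ and the $\ell\geqslant 2$ book-keeping also match the paper (the latter is implicit there).

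The genuine gap in your write-up is the part you yourself flag and defer: you never prove the path clauses in ``starts''/``ends'', offering only strategies. Be aware that the paper does not prove them either --- its opening reduction to the two positional facts is asserted without comment --- so you have not overlooked a hidden argument; but your proposed route (rerouting the paths that $T$ possessed, handling a ``stubborn case'' through $\mathcal A[\ell]$, and repairing the shift mismatch of $X[-1]\to X'$) is not how the gap closes, and those obstructions are real for that strategy. The idea that disposes of the clauses is that reachability is a property of \emph{position}, independent of the tilting object: for the summands of $T_1$ the ``starts'' paths are inherited verbatim from $T$, and for all remaining clauses one checks, in each of the four concrete instances (1)--(4) in which the lemma is applied, that every indecomposable of box $j\geqslant 1$ receives a non-zero morphism from box $j-1$ (possibly after a short detour inside box $j$), and dually that every indecomposable of box $j$ admits a path into $\mathcal B[j]$ and thence into $\mathcal B[j+1]$. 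This is Serre duality/Auslander--Reiten theory: for instance
\[
  \mathrm{Hom}\bigl(\tau^{-1}\bar W[j-1],\bar W[j]\bigr)\;\cong\;\mathrm{Ext}^1\bigl(\tau^{-1}\bar W,\bar W\bigr)\;\cong\; D\,\mathrm{End}(\bar W)\;\neq\;0
\]
whenever $\bar W$ is not projective (with a routing through radicals/tops of projectives, resp.\ socles of injectives, in the module case), together with the existence of maps from vector bundles onto torsion objects (\ref{morph}) and the abundance of morphisms among preprojective, regular and preinjective modules in the wild case. Granting these ambient facts, every indecomposable in boxes $0,\dots,\ell-1$ is reachable from box $0$ and reaches $\mathcal B[\ell-1]$, so your positional facts plus the two non-emptiness statements yield the lemma at once; the piece-level routing you anticipated as the main obstacle never has to be confronted.
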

\begin{proof}
  Since $T'=T_1\oplus T_2'$ it suffices to prove that $T_2'\in
  \mathcal A[\ell-1]\vee \mathcal B[\ell-1]$ and that $T_2'$ has at
  least one indecomposable direct summand in $\mathcal B[\ell-1]$.

\medskip

Let $X'$ be an indecomposable direct summand of $T_2'$. Let $X'\to
N\to X\to X'[1]$ be a triangle such that $X'\to N$ is a left minimal
${\rm add}\,T_1$-approximation. Then $N\to X$ is a right minimal ${\rm
  add}\,T_1$-approximation, $X$ is an indecomposable direct
summand of $T_2$ and $X'$ lies in the full and convex subcategory of
$\mathcal T$ generated by $X[-1]$ and the indecomposable direct
summands of $N$ (\ref{lem_approx}). On the one hand $X[-1]\in \mathcal A[\ell-1]\vee
\mathcal B[\ell -1]$ because $X\in {\rm add}\,T_2$ and $T_2\in
\mathcal A[\ell]\vee \mathcal B[\ell]$. On the other hand
$N\in\mathcal A[\ell-1]\vee
\mathcal B[\ell-1]$ because $N\to X$ is a right minimal ${\rm
  add}\,T_1$-approximation and because of the assumptions made on
$\mathcal A$ and $\mathcal 
B$. Therefore $X'$ lies in the full and convex subcategory generated
by $\mathcal A[\ell -1]\vee \mathcal B[\ell -1]$ which already is full
and convex. Thus $X'\in \mathcal A[\ell -1]\vee \mathcal B[\ell
-1]$. This proves that $T_2'\in \mathcal A[\ell -1]\vee \mathcal
B[\ell -1]$.

\medskip

Let $X$ be an indecomposable direct summand of $T_2$ such that
$X\in\mathcal B[\ell]$. Let $X'\to N\to X\to X'[1]$ be a triangle such
that $N\to X$ is a right minimal ${\rm add}\,T_1$-approximation. Then
$X'$ is an indecomposable direct summand of $T_2'$ lying in the full
and convex subcategory of $\mathcal T$ generated by $X[-1]$ and the
indecomposable direct summands of $N$ (\ref{lem_approx}). Repeating
the arguments used
earlier to prove that $T_2'\in \mathcal A[\ell -1]\vee \mathcal B[\ell
-1]$ and using  that $X\in\mathcal B[\ell]$ entails that
$N\in\mathcal B[\ell-1]$ and hence $X'\in\mathcal B[\ell-1]$ (by
assumption, ${\rm add}(N)\cap \mathcal A[\ell]=\emptyset$). This
proves that $T_2'$ has at least one indecomposable direct summand in
$\mathcal B[\ell-1]$.
\end{proof}
\subsubsection{}
\label{positions_lem2}

The situation where there exists a {\hered}
$\mathcal H\subseteq \mathcal T$
arising from a weighted projective line
 and such that
$T$ starts in $\mathcal H_0$ and ends in  $\mathcal H_0[\ell]$ for some integer
$\ell$ needs careful consideration. Indeed the lower bound that is
relevant to this situation is \ref{lowerbound_regular}, part (2). It
requires  a tube in $\mathcal H_0$ containing an
indecomposable direct summand of $T$ and such that its $\ell$-th
suspension also contains an indecomposable direct summand of $T$. This
crucial fact is proved in \ref{positions_AR3} and
\ref{positions_AR4}. As a preparation, the
 following result  (when $\ell=1$) and the next one
 (when $\ell\geqslant 2$)  explain how these
   requirements are preserved
under tilting mutation.

\begin{lem*}
  Let $\mathcal H\subseteq \mathcal T$ be a {\hered}
  arising from a weighted projective line.
  Let $\mathcal U\subseteq \mathcal H_0$ be
  a tube. Assume that $T$ starts in $\mathcal H_0$ and ends in
  $\mathcal H_0[1]$, and that $T$ has at least one indecomposable
  direct summand in $\mathcal U$ and at least one indecomposable
  direct summand  in $\mathcal H_0[1]\backslash\mathcal U[1]$. Let
  $T=T_1\oplus T_2$ be the direct sum decomposition such that $T_2\in
  {\rm add}\,\mathcal
  U[1]$ and $T_1$ has no indecomposable direct summand in $\mathcal
  U[1]$. Then
  \begin{enumerate}[(1)]
  \item $T'$ starts in $\mathcal H_0$ and ends in $\mathcal H_0[1]$,
  \item for every tube $\mathcal V\subseteq \mathcal H_0$ there exists
    an indecomposable direct summand of $T'$ in $\mathcal V$ if and
    only if   there exists an
    indecomposable direct summand of $T$ in $\mathcal V$,
  \item for every tube $\mathcal V\subseteq \mathcal H_0$ there exists
    an indecomposable direct summand of $T'$ in $\mathcal V[1]$ if and
    only if $\mathcal V\neq \mathcal U$ and there exists an
    indecomposable direct summand of $T$ in $\mathcal V[1]$.
  \end{enumerate}
\end{lem*}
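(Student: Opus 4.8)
The plan is to perform the tilting mutation only at the $\mathcal U[1]$-part of $T$ and to control the resulting summands $T_2'$ by means of \ref{lem_approx}, exploiting that every indecomposable object of $\mathcal T$ is a stalk complex (as $\mathcal H$ is hereditary). First I would check that the prescribed decomposition $T=T_1\oplus T_2$ does fit the setting of \ref{setting_approx}, that is, that ${\rm Hom}(T_2,T_1)=0$, so that $T'$ is indeed tilting. Writing $T_2=U[1]$ with $U\in{\rm add}\,\mathcal U$, and using that $T$ starts in $\mathcal H_0$ and ends in $\mathcal H_0[1]$ (which confines the degree of every indecomposable summand to $\{0,1\}$, so that each summand of $T_1$ lies in $\mathcal H_0\vee\mathcal H_+\vee\mathcal H_0[1]\vee\mathcal H_+[1]$), the vanishing ${\rm Hom}(U[1],T_1)=0$ would be read off the four possible positions: the $\mathcal H_0$- and $\mathcal H_+$-cases give ${\rm Hom}(\mathcal H_0,\mathcal H_0[-1])=0={\rm Hom}(\mathcal H_0,\mathcal H_+[-1])$; a summand in $\mathcal V[1]$ (necessarily with $\mathcal V\neq\mathcal U$, since $T_1$ avoids $\mathcal U[1]$) gives $0$ by orthogonality of distinct tubes; and a summand in $\mathcal H_+[1]$ gives ${\rm Hom}(U[1],\mathcal H_+[1])={\rm Hom}(\mathcal H_0,\mathcal H_+)=0$. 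The facts ${\rm Hom}(\mathcal H_0,\mathcal H_+)=0$ and ${\rm Ext}^1(\mathcal H_+,\mathcal H_0)=0$ are those recalled in Appendix~\ref{appendix_wpl}.

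Next I would study each indecomposable summand $X'$ of $T_2'$. By \ref{lem_approx} there is a triangle $X'\to N\to X\to X'[1]$ in which $X$ is an indecomposable summand of $T_2$ (so $X\in\mathcal U[1]$ and $X[-1]\in\mathcal U$) and $N\to X$ is a minimal right ${\rm add}\,T_1$-approximation. The first step is to locate the summands of $N$: each indecomposable summand of $N$ admits a nonzero morphism to $X\in\mathcal U[1]$, and running again through its four possible positions, using ${\rm Ext}^1(\mathcal H_+,\mathcal H_0)=0$ and tube orthogonality to discard the $\mathcal H_+$, $\mathcal H_0[1]$-off-$\mathcal U$ and $\mathcal H_0$-off-$\mathcal U$ cases, I would conclude that every summand of $N$ lies in $\mathcal U$ or in $\mathcal H_+[1]$. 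Thus one may write $N=N_0\oplus N_1[1]$ with $N_0\in{\rm add}\,\mathcal U$ and $N_1\in\mathcal H_+$.

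The technical heart is then to pin down $X'$ itself, and I expect this dichotomy to be the main obstacle since it is what actually controls the new summands. Rotating the triangle gives $U\to X'\to N\to U[1]$ with $U=X[-1]\in\mathcal U$. Applying the cohomology functors of the standard $t$-structure with heart $\mathcal H$ yields a four term exact sequence $0\to H^{-1}(X')\to N_1\xrightarrow{\psi}U\to H^0(X')\to N_0\to 0$, whence $H^{-1}(X')\in\mathcal H_+$ (a subobject of $N_1$) and $H^0(X')\in\mathcal H_0$ (an extension of $N_0\in\mathcal H_0$ by a quotient of $U\in\mathcal H_0$). Since $\mathcal H$ is hereditary, the indecomposable $X'$ is a stalk complex, so exactly one cohomology survives. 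If $H^{-1}(X')=0$, then $\psi$ is a monomorphism, so $N_1\cong{\rm im}\,\psi$ is a subobject of the torsion object $U$; being torsion-free it must vanish, whence $N=N_0$ and $X'\in\mathcal H_0$ is an extension of objects of the tube $\mathcal U$, so $X'\in\mathcal U$ because $\mathcal U$ is closed under extensions. If $H^0(X')=0$, then $N_0=0$ and $X'\in\mathcal H_+[1]$. Hence every indecomposable summand of $T_2'$ lies in $\mathcal U$ or in $\mathcal H_+[1]$.

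Finally I would read off the three assertions. Since $T_1$ is unchanged and contains every degree-$0$ summand of $T$ as well as every degree-$1$ tube summand of $T$ outside $\mathcal U[1]$, while $T_2'$ contributes summands only in $\mathcal U$ or in the tube-free region $\mathcal H_+[1]$, assertion (2) follows: a new tube summand can only land in $\mathcal U$, where $T$ already has a summand by hypothesis, and for $\mathcal V\neq\mathcal U$ the summands of $T'$ in $\mathcal V$ are exactly those of $T$. Likewise assertion (3) follows: $T'$ has no summand in $\mathcal U[1]$ at all, and for $\mathcal V\neq\mathcal U$ the summands of $T'$ in $\mathcal V[1]$ are precisely those of $T_1$, hence of $T$. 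For assertion (1), the summands coming from $T_1$ inherit from $T$ their paths from $\mathcal H_0$ and to $\mathcal H_0[1]$; for a summand $X'$ of $T_2'$, the nonzero morphism $U\to X'$ provided by \ref{lem_approx} yields a path from $\mathcal U\subseteq\mathcal H_0$, and choosing an indecomposable summand $N^{(j)}$ of $N$ on which the nonzero approximation $N\to U[1]$ does not vanish yields a nonzero composite $X'\to N^{(j)}\to U[1]$, hence a path to $\mathcal H_0[1]$. The hypotheses that $T$ has a summand in $\mathcal U$ and one in $\mathcal H_0[1]\setminus\mathcal U[1]$ finally supply the required summands of $T'$ in $\mathcal H_0$ and in $\mathcal H_0[1]$.
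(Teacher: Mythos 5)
Your proof is correct, and its skeleton matches the paper's: both reduce everything to showing that each indecomposable summand $X'$ of $T_2'$ lies in $\mathcal U$ or in $\mathcal H_+[1]$, and both get there by applying \ref{lem_approx} to the triangle $X'\to N\to X\to X'[1]$ and locating the summands of $N$ inside $\mathcal U\vee\mathcal H_+[1]$ via orthogonality of tubes and the vanishing ${\rm Hom}(\mathcal H_+,\mathcal H_0[1])=0$. Where you genuinely diverge is the last step, the placement of $X'$ itself: the paper invokes the convexity clause of \ref{lem_approx} ($X'$ lies in the convex subcategory generated by $X[-1]$ and the summands of $N$), which tacitly uses that the convex hull of $\mathcal U\vee\mathcal H_+[1]$ in $\mathcal T$ is again $\mathcal U\vee\mathcal H_+[1]$; you instead rotate the triangle and run the cohomology sequence $0\to H^{-1}(X')\to N_1\to U\to H^0(X')\to N_0\to 0$ of the standard $t$-structure, then conclude with the torsion pair $(\mathcal H_0,\mathcal H_+)$ and the stalk-complex property of indecomposables. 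Your route is more self-contained (it replaces the implicit convex-hull computation by an explicit homological argument), at the cost of bringing in the $t$-structure formalism. You also verify ${\rm Hom}(T_2,T_1)=0$, which is needed for the setting of \ref{setting_approx} to apply to this decomposition, and you spell out how assertions (1)--(3) follow from the dichotomy; the paper leaves both points implicit, so these are genuine additions rather than redundancies.

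Two small repairs are needed in your derivation of (1). First, the composite $X'\to N^{(j)}\to U[1]$ need not be nonzero: when $N$ is indecomposable it is exactly the composite of two consecutive maps in a triangle, hence zero. But you do not need it: a path in the paper's sense only requires each arrow to be nonzero, and $X'\to N^{(j)}$ is nonzero by left minimality of $X'\to N$ while $N^{(j)}\to U[1]$ is nonzero by right minimality of $N\to X$, so the path $X'\to N^{(j)}\to U[1]$ exists regardless of whether its composite vanishes. Second, your argument presupposes $N\neq 0$; if $N=0$ then $X'\cong U\in\mathcal U$, and a path from $X'$ to $\mathcal H_0[1]$ is supplied instead by ${\rm Ext}^1(U,\tau U)\simeq D{\rm Hom}(\tau U,\tau U)\neq 0$. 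With these two adjustments the argument is complete.
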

\begin{proof} 
Let $X'$ be an indecomposable direct summand of $T_2'$.  It is
sufficient to prove that  either $X'\in \mathcal U$ or else
  $X'\in\mathcal H_+[1]$.
Let $X'\to N\to
X\to X'[1]$ be a triangle such that $X'\to N$ is a left minimal ${\rm
  add}\,T_1$-approximation. Then $X$ is an indecomposable direct
summand of $T_2$, the morphism $N\to X$ is a right minimal ${\rm
  add}\,T_1$-approximation and $X'$ lies in the full and convex
subcategory of $\mathcal T$ generated by $X[-1]$ and the
indecomposable direct summands of $N$ (\ref{lem_approx}). In particular $X[-1]\in\mathcal
U$ because $T_2\in{\rm add}\,\mathcal U[1]$. Therefore in order to
prove that $X'\in\mathcal U$ or $X'\in\mathcal H_+[1]$ it is
sufficient to prove that $N\in \mathcal U\vee \mathcal H_+[1]$. Let
$Z$ be an indecomposable direct summand of $N$. Then ${\rm
  Hom}(Z,X)\neq 0$ because $N\to X$ is right minimal. Moreover $X\in
\mathcal U[1]$, $Z\in{\rm add}\,T_1$, the indecomposable direct
summands of $T_1$ lie either in $\mathcal H_0$, or in $\mathcal
H_+[1]$ or in $\mathcal H_0[1]\backslash\mathcal U[1]$ and, finally,
the tubes in 
  $\mathcal H_0$ are orthogonal.
This implies
 that $Z\in\mathcal U$ or $Z\in\mathcal H_+[1]$.
\end{proof}

\subsubsection{}
\label{positions_lem3}
\begin{lem*}
    Let $\mathcal H\subseteq \mathcal T$ be a {\hered}
    arising from a weighted projective
      line.
    Let $\ell\geqslant 2$ be an integer. Assume
  that $T$ starts in $\mathcal H_0$ and ends in $\mathcal H_0[\ell]$.
Let $T=T_1\oplus T_2$ be the direct sum decomposition such that
$T_1\in \mathcal
H_0\vee \mathcal H_+[1]\vee\cdots \vee\mathcal 
H_0[\ell-2]\vee \mathcal H_+[\ell-1]\vee \mathcal H_0[\ell-1]$ and $T_2\in
\mathcal H_+[\ell]\vee \mathcal H_0[\ell]$. Then
\begin{enumerate}[(1)]
\item $T'$ starts in $\mathcal H_0$ and ends in $\mathcal H_0[\ell-1]$,
\item $T$ and $T'$ have the same indecomposable direct summands in
  $\mathcal H_0$,
\item for every tube $\mathcal U\subseteq \mathcal H_0$, if $T$ has an
  indecomposable direct summand in $\mathcal U[\ell]$ then $T'$ has
  an indecomposable direct summand in $\mathcal U[\ell-1]$.
\end{enumerate}
\end{lem*}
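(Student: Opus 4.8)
The plan is to treat the three assertions separately, leaning on the mutation machinery of \ref{lem_approx} and \ref{positions_lem1}. The first thing I would record is that, since $T$ starts in $\mathcal H_0$, it has no indecomposable direct summand in $\mathcal H_+=\mathcal H_+[0]$: such a summand would have to be the target of a path issued from $\mathcal H_0$, which is impossible because ${\rm Hom}(\mathcal H_0,\mathcal H_+)=0$ and no path of non-zero morphisms can lower the suspension degree. Hence the prescribed decomposition $T=T_1\oplus T_2$ is exactly the one occurring in \ref{positions_lem1} for $\mathcal A=\mathcal H_+$ and $\mathcal B=\mathcal H_0$, and one checks that ${\rm Hom}(T_2,T_1)=0$, so that the setting of \ref{setting_approx} applies and $T'$ is defined.

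The hypotheses (a)--(c) of \ref{positions_lem1} hold for this pair because ${\rm Hom}(\mathcal H_+,\mathcal H_0[i])={\rm Ext}^1(\mathcal H_+,\mathcal H_0)=0$ for $i\neq 0$ and ${\rm Hom}(\mathcal H_0,\mathcal H_+[i])=0$ for $i\neq 1$, both by torsion-freeness of $\mathcal H_+$ and Serre duality. Moreover $T$ starts in $\mathcal H_0\subseteq\mathcal H=\mathcal A\vee\mathcal B$ and ends in $\mathcal H_0[\ell]=\mathcal B[\ell]$, so \ref{positions_lem1} applies and gives in one stroke that $T'$ ends in $\mathcal H_0[\ell-1]$, that $T'$ starts in $\mathcal H$, and---since $\ell\geqslant 2$---that $T$ and $T'$ have the same indecomposable direct summands in $\mathcal H$. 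Restricting the last statement to $\mathcal H_0\subseteq\mathcal H$ yields assertion (2) (alternatively (2) is immediate, since every indecomposable summand of $T$ or of $T'$ lying in $\mathcal H_0$ is a summand of the common part $T_1$, because $T_2\in\mathcal H[\ell]$ and $T_2'\in\mathcal H[\ell-1]$ with $\ell,\ell-1\geqslant 1$), together with the ``ends'' half of assertion (1).

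It remains to upgrade ``$T'$ starts in $\mathcal H$'' to ``$T'$ starts in $\mathcal H_0$.'' By \ref{positions_lem1} one has $T_2'\in\mathcal H[\ell-1]$ with $\ell-1\geqslant 1$, so every indecomposable summand of $T'$ lying in $\mathcal H$ is a summand of $T_1$, hence of $T$, hence lies in $\mathcal H_0$; thus $T'$ has no summand in $\mathcal H_+$. What is then needed is the structural fact that, over a weighted projective line, every object of $\mathcal H[j]$ with $j\geqslant 1$ is the target of a path issued from $\mathcal H_0$: Serre duality furnishes a non-zero morphism $\tau^{-1}W'[j-1]\to W'[j]$ for each indecomposable $W'\in\mathcal H$ (since ${\rm Ext}^1(\tau^{-1}W',W')\simeq D{\rm Hom}(W',W')\neq 0$), which lowers the degree one step at a time down to $j=1$, where ${\rm Ext}^1(U,W')\neq 0$ for a suitable $U\in\mathcal H_0$---indeed $W'$ admits a non-zero morphism to some torsion object $\tau U$ and ${\rm Ext}^1(U,W')\simeq D{\rm Hom}(W',\tau U)$---supplying the final arrow $U\to W'[1]$ from $\mathcal H_0$. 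As the summands of $T'$ in $\mathcal H$ already lie in $\mathcal H_0$ and those in higher degrees are reachable in this way, $T'$ starts in $\mathcal H_0$, which completes assertion (1).

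For assertion (3) I would return to \ref{lem_approx}. Suppose $\mathcal U\subseteq\mathcal H_0$ is a tube with an indecomposable summand $X$ of $T$ in $\mathcal U[\ell]$; then $X$ is a summand of $T_2$, and a triangle $X'\to N\to X\to X'[1]$ with $N\to X$ a minimal right ${\rm add}\,T_1$-approximation produces an indecomposable summand $X'$ of $T_2'$ lying in the full and convex subcategory generated by $X[-1]\in\mathcal U[\ell-1]$ and the indecomposable summands of $N$. Any summand $Z$ of $N$ satisfies ${\rm Hom}(Z,X)\neq 0$ with $Z\in{\rm add}\,T_1\subseteq\bigvee_{i=0}^{\ell-1}\mathcal H[i]$ and $X\in\mathcal U[\ell]$; since ${\rm Hom}(\mathcal H[i],\mathcal H[\ell])=0$ for $i\leqslant\ell-2$, since ${\rm Hom}(\mathcal H_+[\ell-1],\mathcal H_0[\ell])={\rm Ext}^1(\mathcal H_+,\mathcal H_0)=0$, and since distinct tubes are orthogonal, this forces $Z\in\mathcal U[\ell-1]$. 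As $\mathcal U[\ell-1]$ is convex---being a single tube inside the convex family $\mathcal H_0[\ell-1]$ of pairwise orthogonal tubes---the generated subcategory lies in $\mathcal U[\ell-1]$, whence $X'\in\mathcal U[\ell-1]$ and (3) holds. The step I expect to be the real obstacle is the passage from ``starts in $\mathcal H$'' to ``starts in $\mathcal H_0$'' in assertion (1): it has no analogue for the ``ends'' assertion, precisely because of the asymmetry ${\rm Hom}(\mathcal H_0,\mathcal H_+)=0\neq{\rm Hom}(\mathcal H_+,\mathcal H_0)$, and it is what forces the reachability argument above. Everything else reduces to a careful bookkeeping of the vanishing of ${\rm Hom}$ between the suspensions of $\mathcal H_0$ and $\mathcal H_+$.
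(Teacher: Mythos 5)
Your proof is correct and follows essentially the same route as the paper's: assertions (1) and (2) via \ref{positions_lem1} with $\mathcal A=\mathcal H_+$, $\mathcal B=\mathcal H_0$, and assertion (3) via \ref{lem_approx} together with the degree and tube-orthogonality bookkeeping that forces $N\in{\rm add}\,\mathcal U[\ell-1]$. The one place you go beyond the paper is the explicit Serre-duality reachability argument upgrading ``$T'$ starts in $\mathcal H_+\vee\mathcal H_0$'' to ``$T'$ starts in $\mathcal H_0$''; the paper leaves this step implicit when citing \ref{positions_lem1}, and your argument correctly fills it in.
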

\begin{proof}
$(1)$ and $(2)$ follow from \ref{positions_lem1} applied 
with $\mathcal A=\mathcal H_+$ and $\mathcal B=\mathcal H_0$.

\medskip

$(3)$ Let $\mathcal U\subseteq \mathcal H_0$ be a tube. Let $X$ be an
indecomposable direct summand of $T$ lying in $\mathcal U[\ell]$. In
particular $X$ is an indecomposable direct summand of $T_2$. Let
$X'\to N\to X\to X'[1]$ be a triangle such that $N\to X$ is a right
minimal ${\rm add}\,T_1$-approximation. It follows from \ref{lem_approx} that
$X'$ is an indecomposable direct summand of $T_2'$ and that $X'$ lies
in the full and convex subcategory of $\mathcal T$ generated by
$X[-1]$ and the indecomposable direct summands of $N$. In view of
proving $(3)$ it is therefore sufficient to prove that $N\in{\rm add}\,\mathcal
U[\ell -1]$. By the choice made for the decomposition $T=T_1\oplus
T_2$ and since $X\in \mathcal H_0[\ell]$ the  indecomposable
direct summands of $T_1$ from which there exists a nonzero morphism
to $X$ all lie in $\mathcal H_0[\ell-1]$. This forces $N\in\mathcal
H_0[\ell-1]$ because $N\to X$ is a right minimal ${\rm
  add}\,T_1$-approximation. Moreover since $X\in\mathcal U[\ell]$ and
since $\mathcal H_0$ consists of pairwise orthogonal tubes, the
indecomposable objects in $\mathcal H_0[\ell-1]$ from which there
exists a nonzero morphism to $X$ all lie in $\mathcal
U[\ell-1]$. Therefore $N\in {\rm add}\,\mathcal U[\ell-1]$.
\end{proof}

\section{Indecomposable direct summands of tilting objects in the
  Auslander-Reiten quiver}
\label{section_summandsAR}

Let $T\in\mathcal T$ be a tilting object. This section aims at giving
important information on the position of certain indecomposable direct
summands of $T$ in view of determining ${\rm s.gl.dim.}\,{\rm
  End}(T)^{\rm op}$. Recall that any Auslander-Reiten component of $\mathcal T$ has one
of the following shapes: transjective Auslander-Reiten component, tube or $\mathbb
ZA_{\infty}$. This section studies two particular situations, each of
which is studied in a separate subsection: when
$T$ starts in  a transjective component, and when $T$ starts in a {\tube}. In each one of
these situations a particular {\hered} of
$\mathcal T$ appears to be determined by $T$. This will be crucial to
prove Theorem~\ref{thma} and Theorem~\ref{thmb}.

\subsection{When $T$ starts in a transjective component}
\label{position_AR1}

\begin{prop*}
  Let $T\in\mathcal T$ be a tilting object and $\Gamma\subseteq
  \mathcal T$ be a transjective Auslander-Reiten component. Assume that $T$ starts in
  ${\rm add}\,\Gamma$. Then
   there exists a   slice $\Sigma$ in $\Gamma$ such
  that every source of $\Sigma$ is an indecomposable direct summand
  of $T$, and for every indecomposable direct summand $Y$ of $T$ lying in
  $\Gamma$ there exists a path in 
$\Gamma$ with source in $\Sigma$ and target $Y$.
\end{prop*}
\begin{proof}
 Since $T$ starts in ${\rm add}\,\Gamma$ there exist
   indecomposable summands $S_1,\ldots,S_n$ of $T$ lying in $\Gamma$
 such that ${\rm Hom}(\oplus_{i=1}^nS_i,X)\neq 0$ for every
  indecomposable direct summand $X$ of $T$ lying in $\Gamma$, and such
  that 
  ${\rm Hom}(S_i,S_j)=0$ if $i\neq j$. Let $\Sigma$ be the full
  subquiver of $\Gamma$ the vertices of which are those $X\in \Gamma$
  such that $X$ is the successor in $\Gamma$ of at least one of
   $S_1,\ldots,S_n$, and such that any path in $\Gamma$
  from any of $S_1,\ldots,S_n$ to $X$ is sectional.

By definition, $\Sigma$ is a convex subquiver of
$\Gamma$ intersecting each $\tau$-orbit at most once. Since $\Gamma$
is a transjective Auslander-Reiten component there exists
$n\in\mathbb Z$ such that $\tau^n X$ is a successor in $\Gamma$ of one
of the vertices in $\Sigma$, and $\tau^{n+1} X$ is the successor in
$\Gamma$ of none of the vertices in $\Sigma$. Consider
  any path in $\Gamma$
\begin{equation}
  S_i\to L_1\to L_2\to \cdots\to L_r=\tau^n X\tag{$\gamma$}
\end{equation}
 from one of $S_1,\ldots,S_n$ to $\tau^nX$. If
($\gamma$) were not sectional there would exist some hook
\[
  L_{t-1}\to L_t\to L_{t+1}=\tau^{-1} L_{t-1}\,,
\]
and hence a path in $\Gamma$
\[
  S_i\to L_1\to L_2\to \cdots\to L_{t-1}=\tau L_{t+1}\to \tau
  L_{t+2}\to \cdots \to \tau L_r=\tau^{n+1} X
\]
which would contradict the definition of $n$. The path ($\gamma$) is
therefore sectional. This proves that $\Sigma$ is a  slice in
$\Gamma$ fitting the requirements of the proposition.
\end{proof}

\subsection{When $T$ starts in a one-parameter family of pairwise orthogonal tubes}

Like when $T$ starts in a  transjective
 component of $\mathcal T$, there are relevant {\hereds} associated
with $T$ when it starts in a {\tube}. They arise from  weighted projective lines and there are two
cases to distinguish according to whether $T$ ends in (a suitable
suspension of) the subcategory of torsion objects or of torsion-free
objects, respectively. The
former case is dealt with in \ref{positions_AR3} and
\ref{positions_AR4}. The latter case is dealt with in
\ref{position_AR5}. In both cases it appears that $T$ cannot end in
a {\za} (\ref{positionAR2}).

\subsubsection{When $T$ starts in a {\tube} it cannot end in
a {\za}}
\label{positionAR2}

\begin{lem*}
  Let $\mathcal H\subseteq
  \mathcal T$ be a {\hered}
   equivalent to the
   category of coherent sheaves over a weighted projective line.
  Assume
  that $T$ starts in $\mathcal H_0$ and ends in $\mathcal H_+[\ell]$
  for some integer $\ell$. Then
  the weighted projective line has nonnegative Euler characteristic,
  equivalently $\mathcal H_+$ does not consist of $\mathbb ZA_\infty$
  components.
\end{lem*}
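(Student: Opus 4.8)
The plan is to argue by contradiction. Suppose $\chi(\mathbb X)<0$, so that $\mathcal H_+$ is a union of components of shape $\mathbb ZA_\infty$, the Auslander--Reiten translate $\tau$ acts freely on them and strictly raises the slope of torsion-free indecomposables (by $\deg\omega>0$), whereas $\mathcal H_0$ is a union of $\tau$-periodic tubes; since a weighted projective line of negative Euler characteristic does not arise from a hereditary algebra, $\mathcal T$ has no transjective component. I would first record the two vanishings that control the torsion pair in every suspension, namely ${\rm Hom}(\mathcal H_0,\mathcal H_+)=0$ and ${\rm Ext}^1_{\mathcal H}(\mathcal H_+,\mathcal H_0)=0$ (the latter from Serre duality, as $\tau$ preserves $\mathcal H_0$). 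These force every degree-$0$ summand of $T$ to be torsion and confine $T$ to $\bigvee_{i=0}^{\ell}\mathcal H[i]$ (any path from $\mathcal H_0[0]$, and any path into $\mathcal H_+[\ell]$, is degree-monotone). I would then dispose of the case $\ell=0$ at once: there $T\subseteq\mathcal H$, and a torsion-free summand would have to receive a path from $\mathcal H_0[0]$ lying entirely in degree $0$, impossible since ${\rm Hom}(\mathcal H_0,\mathcal H_+)=0$ makes every such path stay torsion. So we may assume $\ell\geqslant 1$, and fix a torsion summand $M_0\in\mathcal H_0$ (the start) and a torsion-free summand $N\in\mathcal H_+[\ell]$ (the end).

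Next I would isolate the soft constraint and show it is \emph{not} by itself contradictory, so as to locate where the $\mathbb ZA_\infty$-structure must be used. The lexicographic invariant $\phi(X)=(\deg X,\ \text{slope of }X)$ (with slope $+\infty$ for torsion) is non-decreasing along nonzero morphisms between indecomposables, by the two vanishings together with slope-monotonicity of maps between bundles. From "ends in $\mathcal H_+[\ell]$" every summand satisfies $\phi\leqslant(\ell,s_0)$ for some finite $s_0$, and from "starts in $\mathcal H_0$" every summand satisfies $\phi\geqslant(0,+\infty)$; these are compatible precisely because the degree increment "resets" the slope, so $\phi$ gives no contradiction. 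The genuine obstruction must therefore come from the fact that an $\mathbb ZA_\infty$ family, unlike a tube or a transjective component, has \emph{no cofinal sink}: $\tau$ is free and raises slope without bound, and (by the Kerner-type directedness cited in \ref{lowerbound_regular}$(4)$) ${\rm Hom}(A,\tau^{n}B)\neq 0$ for $n\gg 0$ while ${\rm Hom}(A,\tau^{-n}B)=0$ for $n\gg 0$. I would exploit this as follows: the hypotheses yield a path $M_0\rightsquigarrow N'\in\mathcal H_+[\ell]$ together with the "return" maps $\mathcal H_+[\ell]\to\mathcal H_0[\ell]$ (nonzero since ${\rm Hom}(\mathcal H_+,\mathcal H_0)\neq 0$), and the goal is to convert this two-way connection, through Serre duality and the unbounded $\tau$-orbits in $\mathcal H_+$, into either a forbidden self-extension ${\rm Hom}(T,T[j])\neq 0$ with $j\neq 0$, or into a torsion-free direct summand of $T$ in degree $0$ — each contradicting what was established above.

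The hard part is exactly this last conversion. A "path" of nonzero irreducible maps need not compose to a nonzero morphism, so the difficulty is to produce, between \emph{actual summands} of $T$, a nonzero homomorphism or extension in the wrong degree. This is where the tube-versus-$\mathbb ZA_\infty$ dichotomy is decisive: in a tube the relevant sectional paths must intersect (as used in \ref{lowerbound_regular}$(2)$), giving honest nonzero composites, while in an $\mathbb ZA_\infty$ component one instead has the directedness of the $\tau$-orbits and the convexity and maximality of the family $\mathcal H_+$. I expect the clean execution to hinge on choosing $N$ extremal (of maximal slope, in its component, among the finitely many top summands), applying $\tau$-directedness to show that $N$ must be fed by bundles of arbitrarily small slope, and using maximality of $\mathcal H_+$ together with ${\rm Ext}^1(\mathcal H_+,\mathcal H_0)=0$ to push one such predecessor down to degree $0$ — forcing a torsion-free summand there, the desired contradiction. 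Managing the mixed torsion/torsion-free bookkeeping across the intermediate suspensions $\mathcal H[1],\dots,\mathcal H[\ell-1]$, and ensuring the produced morphism lands on a summand rather than an auxiliary object, is the main obstacle.
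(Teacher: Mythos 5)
Your proposal is not a proof: its decisive step is missing, and you say so yourself (``The hard part is exactly this last conversion\dots is the main obstacle''). The preparatory material in your first paragraph is fine (the vanishings ${\rm Hom}(\mathcal H_0,\mathcal H_+)=0$ and ${\rm Ext}^1(\mathcal H_+,\mathcal H_0)=0$, the confinement $T\in\bigvee_{i=0}^\ell\mathcal H[i]$, the exclusion of $\ell=0$), but from there on you only describe a \emph{goal} — convert the connection $\mathcal H_0\rightsquigarrow\mathcal H_+[\ell]$ into a forbidden self-extension of $T$ or a torsion-free summand in degree $0$ — without any mechanism that achieves it. The specific mechanism you gesture at does not work as stated: $\tau$-directedness (${\rm Hom}(E,\tau^nF)\neq 0$ for $n\gg 0$) produces nonzero morphisms between \emph{auxiliary} bundles, whereas the hypotheses (``starts/ends'') only constrain paths among the finitely many \emph{summands} of $T$; nothing forces a predecessor of your extremal summand $N$ to be a summand of $T$, nor can ``pushing a predecessor down to degree $0$'' create a summand of $T$ there — the summands are fixed, and the contradiction has to be extracted from them. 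A further flaw: your lexicographic invariant relies on slope-monotonicity of nonzero maps between indecomposable bundles, which fails when $\chi(\mathbb X)<0$ (indecomposable bundles need not be semistable; inside a $\mathbb ZA_\infty$ component there are nonzero maps that decrease slope). That claim is not load-bearing in your text, but it signals that slope bookkeeping alone cannot carry the argument.

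For comparison, the paper's proof runs along entirely different lines and is worth internalizing: it inducts on $\ell$. For $\ell\geqslant 2$ it performs a (dual) tilting mutation on the decomposition $T=T_1\oplus T_2$ with $T_1\in\mathcal H_0\vee\mathcal H_+[1]$, producing a new tilting object that starts in $\mathcal H_0[1]$ and still ends in $\mathcal H_+[\ell]$, contradicting the induction hypothesis after a shift. The base case $\ell=1$ is where the $\mathbb ZA_\infty$ hypothesis enters, and it does so homologically rather than via $\tau$-orbits: since no {\hered} can contain both a tube and a $\mathbb ZA_\infty$ component with a nonzero morphism from the former to the latter, ${\rm End}(T)^{\rm op}$ is not quasi-tilted, so by Happel--Zacharia together with the upper bound \ref{upper_bound} one gets ${\rm s.gl.dim.}\,{\rm End}(T)^{\rm op}=3$ exactly; but the degree structure of morphisms in $\mathcal D^b({\rm coh}\,\mathbb X)$ (\ref{morph}, \ref{morphAR}) shows that no indecomposable $X$ can satisfy both ${\rm Hom}(T,X)\neq 0$ and ${\rm Hom}(X,T[3])\neq 0$ when the summands of $T$ sit only in $\mathcal H_0\vee\mathcal H_+[1]$ — the required $X$ would have to lie in $\mathcal H_0[2]$, yet ${\rm Hom}(\mathcal H_+[1],\mathcal H_0[2])=0$. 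That contradiction closes the base case. If you want to salvage your approach, you would need a replacement for exactly this step: a way to turn the positional data about summands of $T$ into a numerical invariant (here, the strong global dimension) that the category's morphism structure then contradicts.
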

\begin{proof}
Note that it is necessary that $\ell\geqslant 1$ because $T$ starts in
$\mathcal H_0$. Thus $T\in\mathcal H_0\vee \mathcal H_+[1]\vee\mathcal
H_0[1]\vee \cdots \vee \mathcal H_+[\ell]$.
  Assume by contradiction that $\mathcal H_+$ consists of $\mathbb
  ZA_{\infty}$ components. A contradiction is obtained by induction on
  $\ell\geqslant 1$. 

\medskip

Assume that $\ell=1$. In particular $T\in\mathcal H_0\vee \mathcal
H_+[1]$ and $T$ has at least an indecomposable direct summand in $\mathcal H_0$
and  in $\mathcal H_+[1]$ respectively. Then ${\rm
  End}(T)^{\rm op}$ is not quasi-tilted  for, otherwise, there would
exist a {\hered} $\mathcal H'\subseteq \mathcal
T$ such that $T\in\mathcal H'$; since ${\rm End}(T)^{\rm op}$ is a
connected algebra there would therefore exist a tube $\mathcal
U\subseteq \mathcal H_0$ and a $\mathbb ZA_\infty$ component $\mathcal
V\subseteq \mathcal H_+[1]$ such that $\mathcal U,\mathcal V\subseteq
\mathcal H'$ and ${\rm Hom}(\mathcal U,\mathcal V)\neq 0$, which is
impossible. Then, it follows from \cite[Proposition 3.3]{MR2413349} and
\ref{upper_bound}
that ${\rm s.gl.dim.}\,{\rm End}(T)^{\rm op}=3$. The picture below
shows the subcategories of $\mathcal T$ containing indecomposable
direct summands of $T$ ($\circ$) and $T[3]$ ($\bullet$).

\tiny
\begin{center}
\begin{tikzpicture}

\foreach \k in {0,1,...,8}
{
\draw (\k,0) rectangle (\k + 1,-1);
}

\foreach \k in {0,1,...,4}
{
\draw (2*\k+0.5,0.25) node {$\mathcal H_+[\k]$};
}

\foreach \k in {0,1,...,3}
{
\draw (2*\k+1.5,0.25) node {$\mathcal H_0[\k ]$};
}

\draw (1.5,-0.5) node { $\circ$};

\draw (2.5,-0.5) node {$\circ$};

\draw (7.5,-0.5) node {$\bullet$};

\draw (8.5,-0.5) node {$\bullet$};

\end{tikzpicture}
\end{center}
\normalsize

Following in (\ref{eq:1}) in \ref{morph}, there is no indecomposable
$X\in\mathcal T$ such that both ${\rm Hom}(T,X)$ and ${\rm
  Hom}(X,T[3])$ vanish. This contradicts ${\rm s.gl.dim.}\,{\rm
  End}(T)^{\rm op}=3$.

\medskip

Now assume that $\ell\geqslant 2$. Let $T=T_1\oplus T_2$ be the direct
sum 
decomposition such that $T_1\in\mathcal H_0\vee \mathcal H_+[1]$ and
$T_2\in \mathcal H_0[1]\vee \mathcal H_+[2]\vee \cdots \vee \mathcal
H_+[\ell]$. Let $T_1\to M\to T_1'\to T[1]$ be a triangle where $T_1\to
M$ is a minimal left ${\rm add}\,T_2$-approximation. 
The dual versions of 
\ref{setting_approx} and \ref{positions_lem1} show
that $T_1'\oplus T_2$ is 
 tilting, lies in $\mathcal H_0[1]\vee \mathcal
H_+[2]\vee\cdots \vee\mathcal H_+[\ell]$ and has indecomposable direct
summands in $\mathcal H_0[1]$ and in $\mathcal H_+[\ell]$
respectively. This is impossible by the induction hypothesis.
\end{proof}

\subsubsection{}\label{positions_AR3}
When $\mathcal H\subseteq \mathcal T$ is a {\hered}
arising from a weighted projective
      line and
such that $T$
starts in $\mathcal H_0$ and ends in $\mathcal H_0[1]$, the following
lemma gives information on the indecomposable direct summands of $T$
lying in $\mathcal H_0$ or in $\mathcal H_0[1]$.

\begin{lem*}
  Let $\mathcal H\subseteq \mathcal T$ be a {\hered}
arising from a weighted projective
      line.
  Assume
  that $T$ starts in $\mathcal H_0$ and ends in $\mathcal
  H_0[1]$. Then
  \begin{enumerate}[(1)]
  \item there is no {\hered} of $\mathcal T$
    which contains $T$,
  \item ${\rm s.gl.dim.}\,{\rm End}(T)^{\rm op}=3$,
  \item there exists a tube $\mathcal U\subseteq \mathcal H_0$ such
    that
    \begin{enumerate}[(a)]
    \item every indecomposable direct summand of $T$ lying in
      $\mathcal H_0$ lies in $\mathcal U$,    
    \item every indecomposable direct summand of $T$ lying in
      $\mathcal H_0[1]$ lies in $\mathcal U[1]$.
    \end{enumerate}
  \end{enumerate}
\end{lem*}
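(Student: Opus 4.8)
The plan is to establish (3) first, then to read off from it the lower bound needed for (2), and finally to deduce (1) formally.

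First I set up the bookkeeping and the upper bound. Since $T$ starts in $\mathcal H_0$ and ends in $\mathcal H_0[1]$, each indecomposable summand of $T$ lies in $\mathcal H_0\vee\mathcal H_+[1]\vee\mathcal H_0[1]$; in particular $T\in\bigvee_{i=0}^1\mathcal H[i]$, so \ref{upperbound} gives ${\rm s.gl.dim.}\,{\rm End}(T)^{\rm op}\leqslant 3$. I also record that (1) and (2) are equivalent: by the characterisation of quasi-tilted algebras \cite{MR2413349}, a \hered\ containing $T$ exists if and only if ${\rm End}(T)^{\rm op}$ is quasi-tilted, if and only if ${\rm s.gl.dim.}\,{\rm End}(T)^{\rm op}\leqslant 2$; combined with the upper bound it therefore suffices to prove (3) and the single inequality ${\rm s.gl.dim.}\,{\rm End}(T)^{\rm op}\geqslant 3$.

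The crux is the tube in (3). Write $T=T^{0}\oplus T^{+}\oplus T^{1}$ with $T^{0}\in\mathcal H_0$, $T^{+}\in\mathcal H_+[1]$ and $T^{1}\in\mathcal H_0[1]$; both $T^{0}$ and $T^{1}$ are non-zero because $T$ starts in $\mathcal H_0$ and ends in $\mathcal H_0[1]$. The tubes of $\mathcal H_0$ are pairwise orthogonal, so for indecomposable torsion objects $U,V\in\mathcal H_0\cup\mathcal H_0[1]$ one has ${\rm Hom}(U,V)=0$ unless $U$ and $V$ lie in a common tube (up to the shift). Consequently, if $T^{+}=0$, then the connectedness of ${\rm End}(T)^{\rm op}$ forces every summand of $T^{0}$ to lie in one fixed tube $\mathcal U\subseteq\mathcal H_0$ and every summand of $T^{1}$ to lie in $\mathcal U[1]$, which is exactly (3).

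The hard part is therefore to control the summands in $T^{+}\subseteq\mathcal H_+[1]$, since a torsion-free object can a priori receive a morphism from one tube and map into a \emph{different} tube, thereby linking distinct tubes in the quiver of ${\rm End}(T)^{\rm op}$. To exclude such a bridge I would combine the explicit description of the morphisms between torsion-free objects and tubes recalled in \ref{morph} with the tilting relation ${\rm Hom}(T,T[1])=0$: if a summand $B\in\mathcal H_+[1]$ admitted a non-zero morphism to a summand $M_1$ lying in a tube $\mathcal V[1]$, then the behaviour of the Hom-spaces along $\mathcal V$ given by \ref{morph} would yield ${\rm Ext}^1(M_1[-1],B[-1])\neq 0$, that is ${\rm Hom}(M_1,B[1])\neq 0$, contradicting that $T$ is tilting. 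Hence no summand of $T^{+}$ can join a summand of $T^{0}$ to a summand of $T^{1}$ sitting in another tube, and the connectedness argument of the previous paragraph again confines all torsion summands to a single tube $\mathcal U$ and its shift. I expect this to be the technical heart of the proof; the reserve strategy, should the direct Hom-estimate be awkward, is to remove the summands in $\mathcal U[1]$ by a tilting mutation and to invoke \ref{positions_lem2} to keep track of the tube positions of the remaining torsion summands, inducting on the number of tubes occurring in $\mathcal H_0[1]$.

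Finally I assemble the pieces. Granting (3), the summand of $T$ in $\mathcal H_0$ lies in $\mathcal U$ and the summand in $\mathcal H_0[1]$ lies in $\mathcal U[1]$; both are non-transjective, so \ref{lowerbound_regular}(2) applies with $\ell=1$ and gives ${\rm s.gl.dim.}\,{\rm End}(T)^{\rm op}\geqslant 3$. Together with the upper bound this proves (2), and (1) then follows from the equivalence recorded in the first paragraph.
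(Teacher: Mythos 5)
Your overall architecture --- prove (3) first, then get (2) from \ref{lowerbound_regular}(2) with $\ell=1$, then get (1) from the equivalence ``some {\hered} contains $T$ $\Leftrightarrow$ ${\rm End}(T)^{\rm op}$ is quasi-tilted $\Leftrightarrow$ ${\rm s.gl.dim.}\leqslant 2$'' --- would be fine \emph{if} (3) could be proved on its own. But your proof of (3) is broken at its declared technical heart. The claim ``${\rm Hom}(B,M_1)\neq 0$ with $B\in\mathcal H_+[1]$, $M_1\in\mathcal V[1]$ forces ${\rm Ext}^1(M_1[-1],B[-1])\neq 0$'' is false. By Serre duality ${\rm Ext}^1(M_1[-1],B[-1])\simeq D\,{\rm Hom}(B[-1],\tau M_1[-1])$, and non-vanishing of ${\rm Hom}(B[-1],M_1[-1])$ says nothing about ${\rm Hom}(B[-1],\tau M_1[-1])$. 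Concretely: let $L$ be a line bundle and $S$ the unique quasi-simple in an exceptional tube of rank $p\geqslant 2$ with ${\rm Hom}(L,S)\neq 0$; then $\tau S\neq S$ and ${\rm Hom}(L,\tau S)=0$, so ${\rm Ext}^1(S,L)=0$, and indeed $L\oplus S$ is rigid and extends to a tilting object. Since torsion summands of tilting objects always lie in exceptional tubes (objects of homogeneous tubes have self-extensions), this is the generic situation, not a corner case; your claim would even prohibit any torsion-free summand of a tilting object from mapping onto a torsion summand, which is absurd (the paper's own proof of \ref{positions_lem2} explicitly allows approximation summands in $\mathcal H_+[1]$ mapping to objects of $\mathcal U[1]$).

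Your reserve strategy (mutate away tubes via \ref{positions_lem2} and induct) is essentially the paper's argument, but it is circular in your ordering. To get a contradiction at the end of such an induction one needs to know that \emph{every} tilting object starting in $\mathcal H_0$ and ending in $\mathcal H_0[1]$ has at least one tube $\mathcal U$ with summands in both $\mathcal U$ and $\mathcal U[1]$. The paper extracts exactly this from ${\rm s.gl.dim.}=3$: an indecomposable $X$ with ${\rm Hom}(T,X)\neq 0$ and ${\rm Hom}(X,T[3])\neq 0$ must sit in $\mathcal H_0[2]$, and orthogonality of tubes then places a summand $Z\in\mathcal U[1]$ and a summand $Y\in\mathcal U$ in a common tube. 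That is, the paper's order is (1) $\Rightarrow$ (2) $\Rightarrow$ (3), where (1) is proved directly by a case analysis on a hypothetical {\hered} $\mathcal H'$ containing $T$ (module-category case, then weighted-projective-line case, using \ref{suf} to force $\mathcal H'=\mathcal H$ and contradict that $T$ ends in $\mathcal H_0[1]$). Since your plan obtains (2) only \emph{after} (3), the non-emptiness statement is unavailable to you, and mere connectedness of ${\rm End}(T)^{\rm op}$ cannot replace it: a chain $\mathcal V\to E[1]\to \mathcal W[1]$ through a torsion-free summand can perfectly well connect two distinct tubes. So both your main argument and your fallback fail, and nothing in the proposal proves (1) independently; the reversal of the paper's order is precisely what cannot be made to work here.
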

\begin{proof}
  $(1)$ Proceed by absurd and assume that there exists a {\hered}
  $\mathcal H'\subseteq \mathcal T$ such that 
  $T\in\mathcal H'$. There are two cases to distinguish according to
  whether $\mathcal H'$ is equivalent to a module category or not.

\medskip

Assume first that $\mathcal H'$ is equivalent to ${\rm mod}\,H$ for
some finite-dimensional hereditary algebra $H$. Since $\mathcal
T\simeq \mathcal D^b(\mathcal H)\simeq \mathcal D^b(\mathcal H')$ it
follows that $H$ is of tame representation type. Since moreover $T$ has
indecomposable direct summands in $\mathcal H_0$ and $T\in\mathcal
H'={\rm mod}\,H$ it follows that ${\rm ind}\,\mathcal H_+$ consists of the
transjective Auslander-Reiten component of $\mathcal T$ containing the indecomposable
projective $H$-modules, and $\mathcal H_0$ consists of direct sums of
indecomposable regular $H$-modules. 

\tiny
\begin{center}
  \begin{tikzpicture}[scale=0.8]
    
    \draw (0,0.5) -- (4.5,0.5);

    \draw (0,2) -- (4.5,2);
    
    \draw (7.5,0.5) -- (12,0.5);

    \draw (7.5,2) -- (12,2);

    \draw (5,0) rectangle (7,2.5);

    \draw (1.5,0) -- (2.5,1) -- (2,1.5) -- (3,2.5);

    \draw (9,0) -- (10,1) -- (9.5,1.5) -- (10.5,2.5);

    \draw [decorate,decoration={brace,raise=0.125cm}] (5,2.5) --
    (7,2.5) node [pos=0.5,anchor=north,yshift=0.625cm] {$\mathcal H_0$};

    \draw [decorate,decoration={brace,raise=0.125cm}] (0,2.5) --
    (4.5,2.5) node [pos=0.5,anchor=north,yshift=0.625cm] {$\mathcal H_+$};

    \draw [decorate,decoration={brace,raise=0.125cm}] (7.5,2.5) --
    (12,2.5) node [pos=0.5,anchor=north,yshift=0.625cm] {$\mathcal H_+[1]$};

    \draw [decorate,decoration={brace,mirror,raise=0.125cm}] (1.5,0) --
    (9,0) node [pos=0.5,anchor=north,yshift=-0.25cm] {$\mathcal
      H'={\rm mod}\,H$};
  \end{tikzpicture}
\end{center}
\normalsize

Therefore $T\in\mathcal H'$
whereas it ends
in $\mathcal H_0[1]\subseteq
\mathcal H'[1]$. This is absurd.

\medskip

Assume next that $\mathcal H'$
arises from a weighted projective
      line.
Again there are
two cases to distinguish according to whether $\mathcal H_+'$ consists
of tubes or not.

If $\mathcal H_+'$ consists of tubes then $\mathcal H'$ arises from a
weighted projective line with vanishing Euler characteristic. Let
$\mathcal U\subseteq \mathcal H_0$ be a tube containing an
indecomposable direct summand of $T$. Then $\mathcal U\subseteq
\mathcal H'$ because $T\in\mathcal H'$. Therefore there exists
$q\in\mathbb Q\cup\{\infty\}$ such that  $\mathcal U\subseteq \mathcal
H'^{(q)}$. In other words $\mathcal U\subseteq \left(\mathcal
  H'\langle q\rangle \right)_0$ (see \ref{intr}). Applying \ref{suf} to $\mathcal H$
and $\mathcal H'\langle q\rangle$ entails that $\mathcal H=\mathcal
H'\langle q\rangle$, and hence $\mathcal H_0=\mathcal
H'^{(q)}$. Consequently $T\in\mathcal H'$ whereas $T$ has at least
one indecomposable direct summand in $\mathcal H_0[1]=\mathcal
H'^{(q)}[1]\subseteq \mathcal H'[1]$. This is absurd.

There only remains to treat the case where $\mathcal H_+$ does not
consist of tubes, and hence contains no tube. Since $\mathcal U$ is a
tube containing an indecomposable direct summand of $T$ and since
$T\in\mathcal H'$ it follows that $\mathcal U\subseteq \mathcal
H_0'$. Once again, applying \ref{suf} to $\mathcal H$ and $\mathcal
H'$ entails that $\mathcal H=\mathcal H'$. As observed previously this
leads to a contradiction since $T$ ends in $\mathcal H_0[1]$ and
$T\in\mathcal H'$.

\medskip

$(2)$ It follows from \ref{upper_bound} that ${\rm s.gl.dim.}\,{\rm
  End}(T)^{\rm op}\leqslant 3$. Moreover $(1)$ implies that ${\rm End}(T)^{\rm op}$ is not
quasi-tilted, and hence  ${\rm s.gl.dim.}\,{\rm
  End}(T)^{\rm op}\geqslant 3$ (\cite[Proposition 3.3]{MR2413349}).

\medskip

$(3)$ It is necessary to prove first that there exists a tube
$\mathcal U\subseteq \mathcal H_0$ such that both $\mathcal U$
and $\mathcal U[1]$ contain an indecomposable direct summand of
$T$. Since ${\rm s.gl.dim.}\,{\rm
  End}(T)^{\rm op}=3$, there exists an indecomposable $X\in\mathcal
T$ such that ${\rm Hom}(T,X)\neq 0$ and ${\rm Hom}(X,T[3])\neq
0$. Since $T\in\mathcal H_0\vee \mathcal H_+[1]\vee \mathcal H_0[1]$,
it follows that $X\in \mathcal H_0[2]$ and there exists indecomposable direct summands $Y,Z$ of
$T$ such that  $Z\in\mathcal H_0[1]$, ${\rm Hom}(Z,X)\neq 0$,
  $Y\in\mathcal H_0$ and ${\rm Hom}(X,Y[3])\neq 0$  (see
 \ref{morph} and picture
below where the other possible positions of the indecomposable direct
summands of $T$ or  $T[3]$ are marked with
$\circ$ or $\bullet$, respectively)

\tiny
\begin{center}
\begin{tikzpicture}

\foreach \k in {0,1,...,8}
{
\draw (\k,0) rectangle (\k + 1,-1);
}

\foreach \k in {0,1,...,4}
{
\draw (2*\k+0.5,0.25) node {$\mathcal H_0[\k]$};
}

\foreach \k in {1,2,...,4}
{
\draw (2*\k-0.5,0.25) node {$\mathcal H_+[\k ]$};
}

\draw (0.5,-0.5) node {$\circ$};

\draw (1.5,-0.5) node {$\circ$};

\draw (7.5,-0.5) node{$\bullet$};

\draw (8.5,-0.5) node {$\bullet$};

\draw (2.5,-0.5) node { $Z$};

\draw (4.5,-0.5) node {$X$};

\draw (6.5,-0.5) node {$Y[3]$};

\end{tikzpicture}
\end{center}
\normalsize

Since $\mathcal H_0$ consists of pairwise orthogonal tubes, there
exists a tube $\mathcal U\subseteq \mathcal H_0$ such that
$Z\in\mathcal U[1]$, $X\in\mathcal U[2]$ and $Y[3]\in\mathcal
U[3]$. In particular $Y\in\mathcal U$ and $Z\in\mathcal U[1]$. This
proves the claim. In other words if $\mathcal E(T)$ denotes the set of
those tubes $\mathcal U\subseteq \mathcal H_0$ such that each one of
$\mathcal U$ and $\mathcal U[1]$ contains an indecomposable direct
summand of $T$ then $\mathcal E(T)\neq \emptyset$.

\medskip

Next it useful to prove that $\mathcal E(T)$ consists of a single tube
which contains all indecomposable direct summands of $T$ lying in
$\mathcal H_0$. Applying \ref{positions_lem2} to $T$ and repeating the
application for every tube lying in $\mathcal E(T)\backslash\{\mathcal
U\}$ eventually yields
a tilting object $S\in\mathcal T$ such that
\begin{itemize}
\item $S$ starts in $\mathcal H_0$ and ends in $\mathcal H_0[1]$,
\item $\mathcal E(S)$ consists of a single tube $\mathcal U$,
\item every indecomposable direct summand of $S$ lying in $\mathcal
  H_0[1]$ lies in $\mathcal U[1]$,
\item for every tube $\mathcal V\subseteq\mathcal H_0$ there exists an
  indecomposable direct summand of $T$ lying in $\mathcal V$ if and
  only if the same holds true for $S$.
\end{itemize}
It is not possible for $S$ to have any indecomposable direct summand
lying in $\mathcal H_0\backslash\mathcal U$ for, otherwise, the dual
version of \ref{positions_lem2} could  apply to $S$ and 
  $\mathcal
U[1]$  and yield a tilting
object $S'\in\mathcal T$ starting in $\mathcal H_0$, ending in
$\mathcal H_0[1]$ and such that $\mathcal E(S')=\emptyset$. Therefore
every  indecomposable direct summand of $S$ lying in $\mathcal H_0$ (and
hence every indecomposable direct summand of $T$ lying in $\mathcal
H_0$) lies in $\mathcal U$. In particular $\mathcal E(T)=\{\mathcal
U\}$.

\medskip

Finally it is not possible for $T$ to have any indecomposable direct
summand in $\mathcal H_0[1]\backslash\mathcal U[1]$ for, otherwise,
\ref{positions_lem2} could apply to $T$ and $\mathcal U$,  and yield a tilting object
$T'\in\mathcal T$ starting in $\mathcal H_0$, ending in $\mathcal
H_0[1]$ and such that $\mathcal E(T')=\emptyset$. This proves $(3)$.
\end{proof}

\subsubsection{}\label{positions_AR4}
The
previous result extends
as follows when
$T$ starts in $\mathcal H_0$ and ends in $\mathcal H_0[\ell]$ for a {\hered}
$\mathcal H\subseteq \mathcal T$
arising from a weighted projective
      line and for  $\ell\geqslant 1$.

\begin{prop*}
  Let $\mathcal H\subseteq\mathcal T$ be a {\hered}
arising from a weighted projective
      line.
  Assume that $T$ starts in $\mathcal H_0$
  and ends in $\mathcal H_0[\ell]$ for some integer $\ell\geqslant
  0$.
Then there exists a tube $\mathcal U\subseteq\mathcal H_0$ such that 
\begin{enumerate}[(a)]
\item $\mathcal U$ contains every indecomposable direct summand of
  $T$ lying in $\mathcal H_0$,
\item $\mathcal U[\ell]$ contains every indecomposable direct summand
  of $T$ lying in $\mathcal H_0[\ell]$.
\end{enumerate}
In particular, if $\ell =0$ then $\mathcal U$ contains every
indecomposable direct summand of $T$.
\end{prop*}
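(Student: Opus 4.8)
The plan is to argue by induction on $\ell$, treating $\ell=0$ and $\ell=1$ directly and reducing the case $\ell\geqslant 2$ to the case $\ell-1$ by means of the tilting mutation analysed in \ref{positions_lem3}.

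Suppose first that $\ell=0$, so that $T$ starts and ends in $\mathcal H_0$. For any indecomposable direct summand $X$ of $T$ there is a path from an object of $\mathcal H_0$ to $X$ and a path from $X$ to an object of $\mathcal H_0$; concatenating them and using that $\mathcal H_0$ is convex gives $X\in\mathcal H_0$. Thus every indecomposable direct summand of $T$ lies in $\mathcal H_0$. Since distinct tubes of $\mathcal H_0$ are pairwise orthogonal, summands lying in two of them would express ${\rm End}(T)^{\rm op}$ as a non-trivial product; as this algebra is connected, all indecomposable direct summands of $T$ lie in a single tube $\mathcal U\subseteq\mathcal H_0$, which is what is required (conditions (a) and (b) coincide when $\ell=0$). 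The case $\ell=1$ is exactly part (3) of \ref{positions_AR3}.

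Now let $\ell\geqslant 2$ and assume the statement for $\ell-1$. Because $T$ starts in $\mathcal H_0$ and ends in $\mathcal H_0[\ell]$, convexity confines $T$ to the stripe $\mathcal H_0\vee\mathcal H_+[1]\vee\cdots\vee\mathcal H_+[\ell]\vee\mathcal H_0[\ell]$, so the direct sum decomposition $T=T_1\oplus T_2$ of \ref{positions_lem3} (with $T_2\in\mathcal H_+[\ell]\vee\mathcal H_0[\ell]$) is defined; let $T'$ be the tilting object obtained from it. By \ref{positions_lem3} the object $T'$ again starts in $\mathcal H_0$ and ends in $\mathcal H_0[\ell-1]$, so the induction hypothesis produces a tube $\mathcal U\subseteq\mathcal H_0$ such that $\mathcal U$ contains every indecomposable direct summand of $T'$ in $\mathcal H_0$ and $\mathcal U[\ell-1]$ contains every indecomposable direct summand of $T'$ in $\mathcal H_0[\ell-1]$. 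I claim this $\mathcal U$ also works for $T$. Part (2) of \ref{positions_lem3} says that $T$ and $T'$ have the same indecomposable direct summands in $\mathcal H_0$, whence (a). For (b), let $X$ be an indecomposable direct summand of $T$ lying in $\mathcal H_0[\ell]$, say $X\in\mathcal V[\ell]$ for a tube $\mathcal V\subseteq\mathcal H_0$; part (3) of \ref{positions_lem3} then yields an indecomposable direct summand of $T'$ in $\mathcal V[\ell-1]$, which by the induction hypothesis lies in $\mathcal U[\ell-1]$. Since distinct tubes are disjoint this forces $\mathcal V=\mathcal U$, so $X\in\mathcal U[\ell]$, proving (b).

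The one step requiring care is this transport of condition (b) from $T'$ to $T$. It succeeds precisely because \ref{positions_lem3} keeps track of the tubes individually — its part (3) is formulated tube by tube — and because the indecomposables of distinct tubes of $\mathcal H_0$ are disjoint, so that membership of a summand of $T'$ in $\mathcal U[\ell-1]$ determines the tube $\mathcal V$ uniquely. The remaining verifications are straightforward bookkeeping of the start/end conditions already recorded in \ref{positions_lem3}.
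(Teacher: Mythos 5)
Your proof is correct and is essentially the paper's own argument: the same induction on $\ell$, with $\ell=0$ handled by convexity and pairwise orthogonality of the tubes of $\mathcal H_0$ (plus connectedness of ${\rm End}(T)^{\rm op}$), $\ell=1$ by \ref{positions_AR3}, and $\ell\geqslant 2$ by applying the induction hypothesis to the mutated tilting object $T'$ of \ref{positions_lem3} and transporting (a) and (b) back via its parts (2) and (3). You merely spell out the tube-by-tube disjointness argument in the transfer of (b), which the paper leaves implicit.
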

\begin{proof}
When $\ell=0$
  the hypotheses entail that $T\in\mathcal H_0$. The conclusion then
follows from the fact that  $\mathcal H_0$
consists of pairwise orthogonal tubes.

\medskip

  The general case proceeds by induction on $\ell\geqslant 1$. The case $\ell=1$ is
  dealt with using \ref{positions_AR3} so assume that $\ell\geqslant 2$. Let $T'\in\mathcal T$ be the
  tilting object introduced in \ref{positions_lem3}. Therefore the
  induction hypothesis applies to $T'$. Let $\mathcal
  U\subseteq\mathcal H_0$ be the tube such that $\mathcal U$ (or
  $\mathcal U[\ell]$) contains every indecomposable direct summand of
  $T'$ lying in $\mathcal H_0$ (or in $\mathcal H_0[\ell-1]$,
  respectively). Then, it follows from \ref{positions_lem3}, parts
  $(2)$ and $(3)$, and from the fact that $T$ ends in $\mathcal
  H_0[\ell]$ that the conclusion of the proposition holds true
  for $T$
\end{proof}

\subsubsection{}
\label{position_AR5}
When $T$ starts in a {\tube} 
but does not end in a suitable suspension of that family (unlike
\ref{positions_AR4}), then  there still exists  a relevant {\hered} associated with
$T$ as explained in the following 
result. 

\begin{prop*}
  If
  $T$ starts in a {\tube} and does not end in  a transjective
  component, then
  there exists a {\hered} $\mathcal H\subseteq
  \mathcal T$
arising from a weighted projective
      line
  and there exists an integer $\ell\geqslant
  0$ such that
  \begin{enumerate}[(a)]
  \item $T$ starts in a {\tube} contained in $\mathcal H$, and
  \item $T$ ends in $\mathcal H_0[\ell]$.
  \end{enumerate}
\end{prop*}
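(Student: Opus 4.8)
The plan is to realise $\mathcal T$ concretely and then to split according to the sign of the Euler characteristic of the underlying weighted projective line. Since $T$ starts in a \tube, the description recalled in \ref{tube_family} provides a \hered\ $\mathcal H\subseteq\mathcal T$ arising from a weighted projective line $\mathbb X$, with $\mathcal T\simeq\mathcal D^b({\rm coh}\,\mathbb X)$ and $T$ starting in $\mathcal H_0$; fix such an $\mathcal H$. Serre duality on ${\rm coh}\,\mathbb X$ yields ${\rm Hom}(\mathcal H_0,\mathcal H_+)=0={\rm Ext}^1(\mathcal H_+,\mathcal H_0)$ together with ${\rm Hom}(\mathcal H_+,\mathcal H_0)\neq 0$ and ${\rm Ext}^1(\mathcal H_0,\mathcal H_+)\neq 0$; between indecomposables a non-zero path can therefore only run inside a single box $\mathcal H_0[i]$ or $\mathcal H_+[i]$, from $\mathcal H_+[i]$ to $\mathcal H_0[i]$, or from $\mathcal H_0[i]$ to $\mathcal H_+[i+1]$. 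I would first record this, and then use the classification of Auslander--Reiten components together with the fact, dual to the analysis of where $T$ starts (cf.\ \ref{strat4}), that $T$ ends in a transjective component, in a \tube, or in a \za.

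Suppose $\chi(\mathbb X)>0$. Then there is a transjective component $\Gamma$, equal to $\bigvee_i\mathcal H_+[i]$, and it is cofinal: every indecomposable admits a non-zero morphism into $\Gamma$, a torsion object in $\mathcal H_0[i]$ mapping into $\mathcal H_+[i+1]$ because ${\rm Ext}^1(\mathcal H_0,\mathcal H_+)\neq 0$. As the objects lying in tubes do not generate $\mathcal T$, the tilting object $T$ has a summand in some $\mathcal H_+[i]\subseteq\Gamma$, whence $T$ ends in $\Gamma$; this contradicts the hypothesis, so $\chi(\mathbb X)\leqslant 0$. Suppose next $\chi(\mathbb X)<0$. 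Then $\mathcal T$ has no transjective component, the \za\ is $\mathcal H_+$ up to suspension and the \tube\ is $\mathcal H_0$ up to suspension. Were $T$ to end in a \za, that family would be $\mathcal H_+[\ell]$ while $T$ starts in $\mathcal H_0$, so \ref{positionAR2} would force $\chi(\mathbb X)\geqslant 0$, a contradiction; hence $T$ ends in a \tube, which by the uniqueness up to suspension equals $\mathcal H_0[\ell]$ for some $\ell\geqslant 0$. The conclusion then holds with this very $\mathcal H$.

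The remaining case $\chi(\mathbb X)=0$ (tubular type) is the genuine content of the statement: here $\mathcal H_+$ too is a union of tubes and there is neither a transjective component nor a \za, so $T$ ends in a \tube, say $\mathcal B$. If $\mathcal B=\mathcal H_0[\ell]$ we are in the situation of \ref{positions_AR4} and are done with the present $\mathcal H$, so I may assume that, up to suspension, $\mathcal B$ is a tube family of $\mathcal H_+$, i.e. $T$ ends in a family of a slope different from the starting one. The plan is to replace $\mathcal H$ by a new \hered\ $\mathcal H'$, arising from a tubular weighted projective line, whose torsion subcategory is $\mathcal B$: the slope structure recalled in Appendix~\ref{appendix_wpl} shows that every \tube\ occurs as $\mathcal H'_0$ for such an $\mathcal H'$, and that one such heart accommodates the tube families of a whole period of slopes. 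Choosing $\mathcal H'$ so that $\mathcal B=\mathcal H'_0$ up to the suspension $[\ell]$ while the starting family $\mathcal A$ still lies inside $\mathcal H'$, one gets that $T$ starts in the \tube\ $\mathcal A\subseteq\mathcal H'$ and ends in $\mathcal H'_0[\ell]$, which is exactly (a) and (b).

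The main obstacle is this last choice. One must check that the starting and ending tube families of $T$ can be fitted simultaneously into one period of a single tubular heart, in the right cyclic order and at the required suspensions (start at $0$, end at $\ell$); the difference of their slopes is bounded because $T$ has bounded support $\bigvee_{i=0}^{\ell}\mathcal H[i]$, but turning this into an explicit heart requires the compatibility between the suspension functor and the slopes in tubular type. I expect this to be organised by induction on $\ell$, peeling off the top box by a tilting mutation and invoking \ref{positions_lem2} and \ref{positions_lem3} (and their duals) to control how the relevant tube families move, exactly as the induction on $\ell$ is run in the proof of \ref{positionAR2}.
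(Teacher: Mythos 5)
Your proof is complete only in the case $\chi(\mathbb X)<0$ (where it coincides with the paper's argument); each of the other two cases has a genuine gap. For $\chi(\mathbb X)>0$, you deduce that $T$ ends in a transjective component from the fact that $T$ has an indecomposable summand in some $\mathcal H_+[i]$. That inference is invalid: ending in a component requires a path from \emph{every} indecomposable summand of $T$ into it, and a summand lying in $\mathcal H_0[j]$ with $j\geqslant i$ has no path to $\mathcal H_+[i]$, because ${\rm Hom}(\mathcal H_0,\mathcal H_+)=0$ forces all paths to move weakly rightwards through the boxes $\ldots,\mathcal H_+[i],\mathcal H_0[i],\mathcal H_+[i+1],\ldots$ (also, the $\mathcal H_+[i]$ are pairwise distinct transjective components, not a single component $\bigvee_i\mathcal H_+[i]$). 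So the contradiction you claim is not obtained, and indeed it should not be: nothing excludes the domestic configuration in which $T$ starts in $\mathcal H_0$, has summands of positive rank in intermediate boxes, and ends in $\mathcal H_0[\ell]$ --- this is precisely the situation treated by \ref{positions_AR3} and \ref{positions_AR4}, whose proofs explicitly allow hearts of tame type. In that situation the proposition holds at once with the present $\mathcal H$, but your argument rules the case out instead of concluding.

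For $\chi(\mathbb X)=0$ you state the correct plan --- replace $\mathcal H$ by a heart whose torsion part is the ending family --- but you leave its key step unproved, proposing an induction on $\ell$ via tilting mutations; as written this is a missing idea, not a proof. The paper closes this gap with no slope bookkeeping at all, by anchoring the heart at the \emph{ending} family from the start, uniformly in the Euler characteristic: by \ref{positionAR2}, $T$ ends in a {\tube}; by \ref{tube_family}, part (2), that family equals $\mathcal H'_0$ for a {\hered} $\mathcal H'$ arising from a weighted projective line; the starting family of $T$, being a tube family of $\mathcal T\simeq\mathcal D^b(\mathcal H')$, is \emph{contained} in $\mathcal H'[-\ell]$ for some integer $\ell$ (in tubular type it is some $\mathcal H'^{(q)}[-\ell]\subseteq\mathcal H'[-\ell]$, see \ref{intr}), and $\ell\geqslant 0$ since paths run from it into $\mathcal H'_0$. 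Taking $\mathcal H=\mathcal H'[-\ell]$ gives (a) and (b) simultaneously. The observation you missed is the asymmetry in the statement: (b) forces the ending family to be the torsion part $\mathcal H_0[\ell]$, but (a) only asks the starting family to be \emph{contained} in $\mathcal H$; hence choosing the heart adapted to the ending family costs nothing, whereas your choice, adapted to the starting family, creates exactly the compatibility problem you could not resolve.
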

\begin{proof}
  It follows from \ref{positionAR2} that $T$ ends in {\tube}. Moreover \ref{tube_family} shows
  that there exists a {\hered} $\mathcal
  H'\subseteq \mathcal T$
  arising from a weighted projective
      line and
  such that $\mathcal H_0'$ is that
  family. Let $\ell\geqslant 0$ be the integer such that $T$ starts in
  $\mathcal H'[-\ell]$. Let $\mathcal H=\mathcal H'[-\ell]$. Then
  $\mathcal H$ fits the conclusion of the proposition.
\end{proof}

\section{The strong global dimension through Auslander-Reiten theory}
\label{section_ar}

Let $T\in\mathcal T$ be a tilting object. The objective of this section
is to determine ${\rm s.gl.dim.}\,{\rm End}(T)^{\rm op}$ in terms of the
position of the indecomposable direct summands of $T$ in the
Auslander-Reiten quiver of $T$.
Recall that $T$ may start either in  a transjective
component, or in a {\tube},
or in a {\za}.
 The  
three following subsections therefore treat each one of these cases
separately. The situation where $T$ {\it ends}
in a transjective component is dual to that where $T$ starts in a
transjective component. Thus when assuming that $T$ {\it does not
  start} in a transjective component it may be assumed also that $T$
does not end in a transjective component.

\subsection{When $T$ starts in a transjective component}
\label{transjective}

\begin{prop*}
  Let $T\in\mathcal T$ be a tilting object. Assume that $T$
  starts in  a transjective component $\Gamma$.  Let
  $\Sigma$ be the slice introduced in \ref{position_AR1}.
\begin{enumerate}[(1)]
\item Let
  $\mathcal H=\{X\in\mathcal T\ |\ (\forall S\in \Sigma)\ 
  (\forall i\neq 0)\ \ {\rm Hom}(S,X[i])=0\}$. Then $\mathcal H$ is a
  {\hered}. Moreover there exists an
  integer $\ell\geqslant 0$ such that $T\in\bigvee_{i=0}^\ell\mathcal H[i]$
and such that $T$ has an indecomposable summand in $\mathcal H$ and in
$\mathcal H[\ell]$;
\item If  ${\rm End}(T)^{\rm op}$ is not a hereditary algebra then
  ${\rm s.gl.dim.}\,{\rm End}(T)^{\rm op}=\ell +2$.
\end{enumerate}
\end{prop*}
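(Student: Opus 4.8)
The plan is to treat the two assertions separately, the first being the substantial one and the second following almost formally from the transjective lower bound \ref{lowerbound_section}.

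For (1), I would first record that, $\Gamma$ being transjective, $\mathcal T\simeq\mathcal D^b({\rm mod}\,\k Q)$ for an orientation $Q$ of the underlying graph, and that $\Sigma$, being a complete slice of $\Gamma$ by \ref{position_AR1}, is a section of this transjective component. The object $\bigoplus_{S\in\Sigma}S$ is then a tilting object whose endomorphism algebra $H$ is hereditary, and the perpendicular category $\mathcal H=\{X:{\rm Hom}(S,X[i])=0,\ \forall S\in\Sigma,\ \forall i\neq 0\}$ is precisely the image of ${\rm mod}\,H$ under the induced equivalence, that is, the heart of the associated $t$-structure. Thus $\mathcal H$ is a {\hered}, the embedding extends to $\mathcal D^b(\mathcal H)\simeq\mathcal T$, and the indecomposable objects of $\Sigma$ are the indecomposable projectives of $\mathcal H$; in particular $\Sigma\subseteq\mathcal H$. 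This part is classical slice theory (Happel \cite{MR935124}) and I would mostly cite it.

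Next I would fix $\ell$ to be the largest integer such that $T$ has an indecomposable summand in $\mathcal H[\ell]$; this is finite since $T$ has finitely many summands and, $\mathcal H$ being hereditary, every indecomposable object of $\mathcal T$ lies in exactly one $\mathcal H[i]$. As the sources $S_1,\ldots,S_n$ of $\Sigma$ are summands of $T$ lying in $\mathcal H=\mathcal H[0]$, the value $0$ is attained, so $T$ has a summand both in $\mathcal H$ and in $\mathcal H[\ell]$. The remaining point, which I expect to be the main obstacle, is to show that no summand of $T$ lies in a negative suspension $\mathcal H[j]$ with $j<0$, so that indeed $T\in\bigvee_{i=0}^\ell\mathcal H[i]$. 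Here I would use that a non-zero morphism between indecomposables can only preserve or raise by one the index $i$ of the box $\mathcal H[i]$, together with the construction of $\Sigma$ in \ref{position_AR1}: every summand of $T$ lying in $\Gamma$ is a successor of $\Sigma\subseteq\mathcal H[0]$, hence lies in some $\mathcal H[i]$ with $i\geqslant 0$; for a summand lying in a non-transjective (regular) component I would argue that such components are reached from $\Gamma$ only in non-negative degrees, using the standard morphism combinatorics between the transjective and the regular components. Controlling this last case carefully is the delicate step.

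For (2), the upper bound ${\rm s.gl.dim.}\,{\rm End}(T)^{\rm op}\leqslant\ell+2$ is immediate from (1) and \ref{upper_bound}. If $\ell\geqslant 1$, then the data produced in (1), namely $\Gamma$, $\Sigma$, its sources $S_1,\ldots,S_n$ being summands of $T$, the category $\mathcal H$, and an indecomposable summand $L$ of $T$ in $\mathcal H[\ell]$, is exactly the setting of \ref{lowerbound_section}, which yields ${\rm s.gl.dim.}\,{\rm End}(T)^{\rm op}\geqslant\ell+2$, whence equality. If $\ell=0$, then $T\in\mathcal H$ is a tilting object of the hereditary abelian category $\mathcal H$, so ${\rm End}(T)^{\rm op}$ is quasi-tilted; being non-hereditary by hypothesis, it has strong global dimension $2=\ell+2$ by the result of Happel and Zacharia \cite{MR2413349}. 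In both cases the equality ${\rm s.gl.dim.}\,{\rm End}(T)^{\rm op}=\ell+2$ follows.
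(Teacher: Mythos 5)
Your proposal is correct and takes essentially the same route as the paper: part (1) is classical slice theory (\ref{position_AR1} making $\Sigma$ the projectives of the perpendicular category $\mathcal H$) together with the positioning of the summands of $T$, and part (2) combines \ref{upper_bound} with \ref{lowerbound_section} when $\ell\geqslant 1$ and with the quasi-tilted case of Happel--Zacharia when $\ell=0$, exactly as the paper does. The step you flag as delicate (no summand of $T$ in a negative suspension of $\mathcal H$) is completed correctly by your outline -- a path from $\Gamma\subseteq\mathcal H[-1]\vee\mathcal H$ to a regular object of $\mathcal H[j]$ with $j<0$ would have to lie entirely in $\mathcal H[-1]$ and start in its preinjective part, which is impossible -- and the paper's own proof leaves this point entirely implicit.
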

\begin{proof}
(1) The first assertion follows from the fact that $\Sigma$ is a
slice in $\Gamma$. In particular $\mathcal T=\bigvee_{i\in \mathbb
  Z}\mathcal H[i]$. The second assertion follows from the following
facts: the indecomposable projectives in $\mathcal H$ are, up to
isomorphism, the objects in $\Sigma$; and the sources of $\Sigma$
are all summands of $T$.

\medskip

(2) If $\ell=0$ then
${\rm End}(T)^{\rm
  op}$ is 
quasi-tilted and not
hereditary. Thus
${\rm
  s.gl.dim.}\,{\rm End}(T)^{\rm op}=2$ (\cite[Prop. 3.3]{MR2413349}). If $\ell\geqslant 1$
then the conclusion follows from \ref{upperbound} and \ref{lowerbound_section}.
\end{proof}

\subsection{When $T$ starts and ends in
a {\tube}}
\label{tame_sgldim}

\begin{prop*}
Assume that $T$ starts in a {\tube} and does not end in  a transjective
component. Let
$\mathcal H,\ell$ be like in \ref{position_AR5}. Then ${\rm
  s.gl.dim.}\,{\rm End}(T)^{{\rm op}}=\ell +2$.
\end{prop*}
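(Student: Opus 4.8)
The plan is to sandwich ${\rm s.gl.dim.}\,{\rm End}(T)^{\rm op}$ between $\ell+2$ and $\ell+2$, using the upper bound of \ref{upperbound} on one side and the lower bound of \ref{lowerbound_regular}, part $(2)$, on the other; the role of \ref{positions_AR4} is precisely to produce the data that makes the latter applicable. I would begin by recording what \ref{position_AR5} supplies: a {\hered} $\mathcal H\subseteq\mathcal T$ arising from a weighted projective line and an integer $\ell\geqslant 0$ such that $T$ starts in $\mathcal H_0$ and ends in $\mathcal H_0[\ell]$.

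For the upper bound I would first argue that $T\in\bigvee_{i=0}^\ell\mathcal H[i]$. Since $T$ starts in $\mathcal H_0\subseteq\mathcal H$, every indecomposable summand of $T$ is the endpoint of a path of non-zero morphisms whose source lies in $\mathcal H$; as a non-zero morphism between indecomposables links a box $\mathcal H[i]$ only to $\mathcal H[i]$ or $\mathcal H[i+1]$, no summand can sit in $\mathcal H[i]$ for $i<0$. Dually, because $T$ ends in $\mathcal H_0[\ell]\subseteq\mathcal H[\ell]$, no summand sits in $\mathcal H[i]$ for $i>\ell$. Hence $T\in\bigvee_{i=0}^\ell\mathcal H[i]$, and \ref{upperbound} gives ${\rm s.gl.dim.}\,{\rm End}(T)^{\rm op}\leqslant\ell+2$.

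For the lower bound I would feed the hypotheses of \ref{positions_AR4} (which are exactly those just recorded) into it to obtain a tube $\mathcal U\subseteq\mathcal H_0$ such that $\mathcal U$ contains every indecomposable summand of $T$ lying in $\mathcal H_0$ and $\mathcal U[\ell]$ contains every indecomposable summand of $T$ lying in $\mathcal H_0[\ell]$. Because $T$ starts in $\mathcal H_0$ and ends in $\mathcal H_0[\ell]$, it has at least one indecomposable summand $M_0\in\mathcal U$ and at least one indecomposable summand $M_1\in\mathcal U[\ell]$. These fit the setting of \ref{lowerbound_regular}: one has $M_0\in\mathcal U\subseteq\mathcal H_0\subseteq\mathcal H$ and $M_1\in\mathcal U[\ell]\subseteq\mathcal H_0[\ell]\subseteq\mathcal H[\ell]$, both lying in tubes (hence in non-transjective Auslander-Reiten components), and $M_0\in\mathcal U$, $M_1\in\mathcal U[\ell]$ for one and the same tube $\mathcal U$. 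Therefore \ref{lowerbound_regular}, part $(2)$, yields ${\rm s.gl.dim.}\,{\rm End}(T)^{\rm op}\geqslant\ell+2$, and combining the two inequalities gives the announced equality.

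I do not expect a serious obstacle here, since the proposition is essentially the assembly of \ref{positions_AR4} with \ref{lowerbound_regular}, part $(2)$, balanced against \ref{upperbound}. The only step demanding genuine care is the passage from the \emph{starts/ends} hypotheses to the inclusion $T\in\bigvee_{i=0}^\ell\mathcal H[i]$; this relies on the convexity of the stripe decomposition of $\mathcal T$ and on the restriction governing non-zero morphisms between the boxes $\mathcal H[i]$. Once that inclusion is established, both bounds are immediate citations. One should also note that the argument is uniform in $\ell$, including the degenerate case $\ell=0$, where $M_0$ and $M_1$ lie in the same tube $\mathcal U$ and \ref{lowerbound_regular}, part $(2)$, still applies.
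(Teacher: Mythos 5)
Your proof covers only half of the situation, because you have misread what \ref{position_AR5} provides. That proposition does \emph{not} say that $T$ starts in $\mathcal H_0$: it says that $T$ starts in a {\tube} \emph{contained in} $\mathcal H$ and ends in $\mathcal H_0[\ell]$. These two statements differ precisely when $\mathcal H$ arises from a weighted projective line with zero Euler characteristic (the tubular case): there $\mathcal H_+=\bigvee_{q\in\mathbb Q}\mathcal H^{(q)}$ is itself a union of one-parameter families of pairwise orthogonal tubes (\ref{intr}), so the family in which $T$ starts may lie inside $\mathcal H_+$ rather than be $\mathcal H_0$. (If $\chi(\mathbb X)\neq 0$ then $\mathcal H_+$ contains no tube and your reading is correct, but nothing in \ref{position_AR5} rules out the tubular case.) In that case your argument breaks down at both of its key citations: \ref{positions_AR4} is not applicable, since its hypothesis is literally that $T$ starts in $\mathcal H_0$; and \ref{lowerbound_regular}, part $(2)$, is not applicable either, because the starting summand $M_0$ then lies in a tube inside $\mathcal H_+$ while the ending summand $M_1$ lies in $\mathcal H_0[\ell]$, and no tube $\mathcal U$ can satisfy both $\mathcal U\ni M_0$ (forcing $\mathcal U\subseteq\mathcal H_+$) and $\mathcal U[\ell]\ni M_1$ (forcing $\mathcal U\subseteq\mathcal H_0$). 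Nor can you repair this by re-choosing the {\hered} so that its torsion part is the family where $T$ starts (replacing $\mathcal H$ by a suitable shift of $\mathcal H\langle q\rangle$): then the family where $T$ ends sits inside the torsion-free part of the new category, and you face the same obstruction with the roles of the two families exchanged.

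The paper's proof of \ref{tame_sgldim} accordingly splits into two cases. When $T$ starts in $\mathcal H_0$ it argues exactly as you do — \ref{positions_AR4} feeding \ref{lowerbound_regular}, part $(2)$, against the upper bound \ref{upperbound} — and your write-up of this case, including the convexity argument giving $T\in\bigvee_{i=0}^\ell\mathcal H[i]$ and the remark about $\ell=0$, is fine. When $T$ instead starts in $\mathcal H_+$, the paper keeps the same upper bound but obtains the lower bound from \ref{lowerbound_regular}, part $(3)$: take $M_0\in\mathcal H_+$ an indecomposable summand of $T$ in the family where $T$ starts and $M_1\in\mathcal H_0[\ell]$ an indecomposable summand where $T$ ends — both lie in tubes, hence in non-transjective components — and part $(3)$ yields ${\rm s.gl.dim.}\,{\rm End}(T)^{\rm op}\geqslant\ell+2$ without requiring the two tubes to be shifts of one another. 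Adding this second case is what is needed to complete your proof.
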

\begin{proof}
  % It follows from \ref{position_AR5} that $\mathcal H\subseteq
  % \mathcal T$ is a {\hered}
  % with no nonzero projective object and
  % such
  % that $T$ starts in one-parameter family of pairwise orthogonal tubes
  % contained in $\mathcal H$ and ends in $\mathcal H_0[\ell]$.
%   The proof
% distinguishes two cases according to whether or not $T$ starts  in $\mathcal
% H_0$.
% \medskip
Assume first that $T$ starts in $\mathcal H_0$. Let $\mathcal
U\subseteq\mathcal H_0$ be the
  tube obtained upon applying \ref{positions_AR4} to $T$.
Since $T$ starts in $\mathcal H_0$ and ends in $\mathcal H_0[\ell]$ it
follows that $T\in\mathcal H\vee\mathcal H[1]\vee\cdots\vee\mathcal
H[\ell]$. Therefore ${\rm s.gl.dim.}\,{\rm End}(T)^{{\rm op}}\leqslant
\ell+2$ (\ref{upper_bound}). On the other hand it follows from
\ref{lowerbound_regular} (part $(2)$) that ${\rm s.gl.dim.}\,{\rm
  End}(T)^{\rm op}\geqslant \ell+2$.

If $T$ does not start in $\mathcal H_0$ then it starts in $\mathcal
H_+$. The arguments used in the previous case  lead to the same
conclusion provided that \ref{lowerbound_regular}, part $(3)$, is used
instead of \ref{lowerbound_regular}, part $(2)$.
\end{proof}

\subsection{When $T$ starts  in  a {\za}}
\label{wild_sgldim}

\begin{prop*}
  Assume that $T$ starts  in a {\za} and does not end
  in a transjective component. Let 
  $\mathcal H\subseteq\mathcal T$ be a {\hered}
  such that $T$ starts in $\mathcal H$. Let $\ell\geqslant 0$ be such
  that $T$ ends in $\mathcal H[\ell]$. Then ${\rm s.gl.dim.}\,{\rm
    End}(T)^{\rm op}=\ell+2$.
\end{prop*}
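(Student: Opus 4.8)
The plan is to pin ${\rm s.gl.dim.}\,{\rm End}(T)^{\rm op}$ to $\ell+2$ from both sides, using the upper bound \ref{upper_bound} and the lower bound \ref{lowerbound_regular}, part~$(4)$. The latter requires indecomposable direct summands $M_0\in\mathcal H$ and $M_1\in\mathcal H[\ell]$ of $T$, each lying in an Auslander-Reiten component of shape $\mathbb ZA_\infty$; so once such $M_0$ and $M_1$ are produced, the statement follows. For the upper bound I would first record that $T\in\bigvee_{i=0}^{\ell}\mathcal H[i]$. This is forced by Happel's description of $\mathcal T=\bigvee_{i\in\mathbb Z}\mathcal H[i]$: a non-zero morphism between indecomposables either preserves the suspension index or raises it by one, so no path lowers it. Since $T$ starts in $\mathcal H$, every indecomposable summand lies in some $\mathcal H[i]$ with $i\geqslant 0$; since $T$ ends in $\mathcal H[\ell]$, each lies in some $\mathcal H[i]$ with $i\leqslant\ell$. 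Then \ref{upper_bound} gives ${\rm s.gl.dim.}\,{\rm End}(T)^{\rm op}\leqslant\ell+2$.

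The summand $M_0$ is immediate. Let $\mathcal A$ be the {\za} in which $T$ starts; by the characterisation recalled in the introduction it is contained in a single $\mathcal H[i]$, and by the definition of starting, $T$ has an indecomposable summand $M_0\in\mathcal A$, which lies in a $\mathbb ZA_\infty$ component. Every summand of $T$ then has index $\geqslant i$, while the minimal index occurring among them is $0$ because $T$ starts in $\mathcal H$; hence $i=0$ and $M_0\in\mathcal H$.

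Producing $M_1$ is the heart of the matter, and here I would prove that $T$ ends in a {\za}. By hypothesis $T$ does not end in a transjective component, and I claim it cannot end in a {\tube} either. By the characterisation of {\za}s, the presence of $\mathcal A$ puts $\mathcal T$ in exactly one of two situations: either $\mathcal T\simeq\mathcal D^b({\rm mod}\,H)$ with $H$ wild hereditary, in which there is no tube at all; or $\mathcal T\simeq\mathcal D^b({\rm coh}\,\mathbb X)$ with $\mathbb X$ of negative Euler characteristic, in which the dual of \ref{positionAR2} forbids a tilting object starting in a {\za} from ending in a {\tube}. Since every tilting object ends in a transjective component, a {\tube}, or a {\za} (the dual of the trichotomy recalled at the start of the section), $T$ must end in a {\za} $\mathcal B$. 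The same grading argument, applied now at the maximal index $\ell$, forces $\mathcal B$ into $\mathcal H[\ell]$, so $T$ has an indecomposable summand $M_1\in\mathcal H[\ell]$ lying in a $\mathbb ZA_\infty$ component.

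With $M_0$ and $M_1$ in hand, \ref{lowerbound_regular}, part~$(4)$, yields ${\rm s.gl.dim.}\,{\rm End}(T)^{\rm op}\geqslant\ell+2$, which combined with the upper bound gives the equality. The step demanding the most care is the exclusion of a {\tube} ending: it rests on transporting \ref{positionAR2} through the duality $D$ and on checking that the weighted projective line witnessing the hypothetical ending {\tube} can be taken to be the one attached to $\mathcal H$, so that \ref{positionAR2} applies to $DT$ exactly as stated.
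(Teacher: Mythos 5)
Your upper bound, your production of $M_0$, and your use of \ref{lowerbound_regular}, part $(4)$, in the case where $T$ does have a summand in a $\mathbb ZA_\infty$ component of $\mathcal H[\ell]$ all agree with the paper. The proof breaks at exactly the step you flag as delicate: the claim that $T$ cannot end in a {\tube}. That claim is false, and the case it tries to exclude is the real second case of the proposition. Concretely, let $\chi(\mathbb X)<0$, let $S\in\mathcal H_0$ be an exceptional simple torsion sheaf, and let $E$ be a tilting object of the right perpendicular category $S^{\perp}\simeq {\rm coh}(\mathbb X')$ whose summands are vector bundles (the canonical tilting bundle of $\mathbb X'$ will do). Then $T=E\oplus S$ is a tilting object of $\mathcal T$: rigidity holds since ${\rm Ext}^1(S,E)=0$ by definition of $S^{\perp}$, ${\rm Ext}^1(E,S)\simeq D\,{\rm Hom}(S,\tau E)=0$ because $\tau E\in\mathcal H_+$ and ${\rm Hom}(\mathcal H_0,\mathcal H_+)=0$, and ${\rm Ext}^1(E,E)=0$; generation holds because $S$ together with $S^{\perp}$ generates $\mathcal T$. (Its endomorphism algebra is an almost concealed-canonical algebra, squid-like; these exist for every weight type, including wild ones.) This $T$ satisfies the hypotheses of the proposition with $\ell=0$: it starts in the {\za} $\mathcal H_+$ and ends in no transjective component, since none exist. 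Yet it ends in the {\tube} $\mathcal H_0$ and in no {\za} whatsoever: it has no summand in $\mathcal H_+[i]$ for $i\neq 0$, and there is no path from $S$ back into $\mathcal H_+$. So your trichotomy argument cannot conclude, and in the genuinely occurring situation where $T$ ends in $\mathcal H_0[\ell]$ with no summand in a $\mathbb ZA_\infty$ component of $\mathcal H[\ell]$, your proof produces no $M_1$ and hence no lower bound at all.

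This counterexample also shows that no transport of \ref{positionAR2} through the duality $D$ can be repaired to give the exclusion you want, since the excluded configuration actually occurs. The transport is in any case not ``exactly as stated'': $\mathcal H^{\rm op}$ is not a {\hered} arising from a weighted projective line (in $\mathcal H^{\rm op}$ the finite-length objects admit non-zero morphisms to objects of infinite length, which never happens in a category of coherent sheaves, where ${\rm Hom}(\mathcal H_0,\mathcal H_+)=0$); the coh-type heart of $\mathcal T^{\rm op}$ pairs the tubes $(\mathcal H_0)^{\rm op}$ with the shifted bundle part $(\mathcal H_+[1])^{\rm op}$, not with $(\mathcal H_+)^{\rm op}$, and once this is taken into account the example above lands squarely inside the hypotheses whose conclusion fails. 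The underlying asymmetry is ${\rm Ext}^1(\mathcal H_0,\mathcal H_+)\neq 0$ versus ${\rm Ext}^1(\mathcal H_+,\mathcal H_0)=0$: starting in tubes and ending in bundles is not the mirror image of starting in bundles and ending in tubes. The paper's proof accordingly never attempts such an exclusion: it keeps both cases, and when no summand of $T$ in $\mathcal H[\ell]$ lies in a $\mathbb ZA_\infty$ component it deduces that $T$ ends in $\mathcal H_0[\ell]$, takes $M_1$ there, and invokes \ref{lowerbound_regular}, part $(3)$ --- the part designed precisely for $M_0\in\mathcal H_+$ and $M_1\in\mathcal H_0[\ell]$. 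Your proof never uses part $(3)$ and has no substitute for it, so the proposition remains unproved in the case that is its main content.
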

\begin{proof}
First,
${\rm
  s.gl.dim.}\,{\rm End}(T)^{\rm op}\leqslant \ell+2$
due to \ref{upperbound}.
 Let
$M_0\in\mathcal H$ be an
indecomposable direct summand of $T$ lying in an Auslander-Reiten
component of shape $\mathbb ZA_{\infty}$. There are two
cases to distinguish according to whether or not there exists an
indecomposable direct summand $M_1$ of $T$ lying in an Auslander-Reiten
component of shape $\mathbb ZA_{\infty}$  contained in $\mathcal
H[\ell]$. Assume first that this is indeed the case.
According to
\ref{lowerbound_regular} (part $(4)$) it
follows that ${\rm s.gl.dim.}\,{\rm End}(T)^{\rm op}\geqslant \ell+2$,
and hence ${\rm s.gl.dim.}\,{\rm End}(T)^{\rm op}=\ell+2$. 

Otherwise, it is necessary that $\mathcal H$ arises from a weighted
projective line and that $T$ ends in $\mathcal H_0[\ell]$.
Let $M_1$ be an indecomposable direct summand
of $T$ lying in $\mathcal H_0[\ell]$.
 According to \ref{lowerbound_regular}
(part $(3)$) it follows that ${\rm s.gl.dim.}\,{\rm End}(T)^{\rm op}\geqslant \ell+2$,
and hence ${\rm s.gl.dim.}\,{\rm End}(T)^{\rm op}=\ell+2$. 
\end{proof}

\section{Proofs of the main theorems}
\label{section_pfs}

It is worth noticing that if a statement holds true for tilting
objects starting in  a 
transjective component then so does its dual statement for tilting
objects ending in a transjective component. Hence all the
possible situations (up to dualising) for a tilting object $T$ are covered by the three
following cases:
\begin{enumerate}[(a)]
\item $T$ starts in a transjective component, or
\item $T$
  starts in a {\tube} and does not end in a transjective component, or
\item $T$ starts in a {\za} and does not end in a
  transjective component.
\end{enumerate}

\subsection{Proof of Theorem~\ref{thma}}
\label{thma_proof}

\begin{proof}
Assertion $(1)$ follows from \ref{upperbound}. Assertion $(2)$ follows
from \ref{transjective}, \ref{tame_sgldim} and \ref{wild_sgldim}.

\end{proof}

\subsection{Proof of Theorem~\ref{thmb}}

\begin{proof}
Only the last assertion needs a
proof (see \ref{sgldim_diff}). Proceed by induction on $d={\rm s.gl.dim.}\,{\rm End}(T)^{\rm
  op}$ ($\geqslant 2$). If $d=2$ then ${\rm End}(T)^{\rm op}$ is
quasi-tilted, and hence there is nothing to prove. Assume that
$d>2$. Clearly, it suffices to show that there exists a tilting object
$T'\in\mathcal T$ obtained from $T$ by a tilting mutation and such
that ${\rm s.gl.dim.}\,{\rm End}(T')^{\rm op}=d-1$. Let $\mathcal
H\subseteq \mathcal T$ be a hereditary abelian category and
$\ell\geqslant 0$ be an integer like in
\ref{transjective}, \ref{position_AR5} or \ref{wild_sgldim} according
to whether (a), (b) or (c) (as stated at the beginning of the section)
holds true for $T$.

  \medskip
  
  In either case $T$ starts in $\mathcal H$ and ends in $\mathcal
  H[\ell]$. It follows from \ref{transjective}, \ref{tame_sgldim} and
  \ref{wild_sgldim} that $d=\ell+2$. Let
$T=T_1\oplus T_2$
be the direct sum decomposition such that $T_1\in \bigvee_{i=0}^{\ell-1} \mathcal
H[i]$ and $T_2\in\mathcal H[\ell]$. Let $T_2'\to M\to T_2\to
T_2'[1]$ be the triangle such that $M\to T_2$ is a right minimal ${\rm
  add}\,T_1$-approximation. Let
$T'=T_1\oplus T_2'$.
Clearly ${\rm Hom}(T_2,T_1)=0$. Thus $T'$ is a tilting object (\ref{setting_approx}).

\medskip

Applying \ref{positions_lem1} to $\mathcal A=\{0\}$ and $\mathcal
B=\mathcal H$ shows that $T'$ starts in $\mathcal H$ and ends in
$\mathcal H[\ell-1]$. In particular if $\ell=1$, that is $d=3$, then
$T'\in\mathcal H$; therefore ${\rm s.gl.dim.}\,{\rm End}(T')^{\rm
  op}=2=d-1$. From now on assume that $\ell\geqslant 2$. Therefore
$T'$ starts in $\mathcal H$ and ends in $\mathcal H[\ell -1]$, and $T$
and $T'$ have the same indecomposable direct summands lying in
$\mathcal H$ (\ref{positions_lem1}, with $\mathcal A=\{0\}$ and
$\mathcal B=\mathcal H$). The rest of the proof distinguishes three
cases according to situations (a), (b) and (c) listed earlier.

\medskip

(a) Because of the conditions satisfied by $T$ and $T'$,
the triple $(\Sigma',\mathcal H',\ell')$
arising from \ref{transjective} applied to $T'$ is such that
$\Sigma'=\Sigma$, $\mathcal H'=\mathcal H$, $\ell'=\ell-1$, and ${\rm
  s.gl.dim.}\,{\rm End}(T')^{\rm op}=d-1$.

\medskip

(b) Note that $\mathcal H$
arises from a weighted projective
      line
and, by assumption, $T$ ends in
$\mathcal H_0[\ell]$ (\ref{position_AR5}). Applying \ref{positions_lem1}
to $\mathcal A=\mathcal H_+$ and $\mathcal B=\mathcal H_0$ shows that
$T'$ starts in a {\tube} 
contained in $\mathcal H$ and ends in $\mathcal H_0[\ell -1]$. Hence,
${\rm s.gl.dim.}\,{\rm End}(T)^{\rm op}=\ell+1=d-1$ (\ref{tame_sgldim}).

\medskip

(c) Note that $\mathcal H$
arises either from a weighted projective line with
negative Euler
characteristic, or else from a hereditary algebra of
wild representation
type. Apply \ref{positions_lem1} in situation (2) (or, in situation
(4)) to the former case (or, to the latter case, respectively).
 Since $\ell\geqslant 2$ this shows that
$T'$ starts in a {\za} contained in $\mathcal H$, that it ends in $\mathcal
H[\ell-1]$ and that it does not end in a transjective
component. Therefore ${\rm s.gl.dim.}\,{\rm End}(T')^{\rm
  op}=\ell+1=d-1$ (\ref{wild_sgldim}).
\end{proof}

\appendix
% \appendixpage
% \addappheadtotoc

\section{Morphisms in bounded derived categories of weighted
  projective lines}
\label{appendix_wpl}

This section collects some known results on hereditary abelian
categories arising from weighted projective lines,
and which are used
in the proof of the main theorems in this text. Also it proves some
useful technical facts on morphism spaces in the corresponding
bounded derived categories.  

Recall that like everywhere else in this text, by "{\hered} of $\mathcal{T}$" is meant a full subcategory  
${\mathcal H}'\subseteq \mathcal T$ which is hereditary abelian
and such that the embedding  
${\mathcal   H}'\hookrightarrow \mathcal T$ extends to a triangle
equivalence 
  $\mathcal D^b({\mathcal H}')\simeq \mathcal T$.

\subsection{Reminder on ${\rm coh}(\mathbb{X})$.} \label{intr}

Let $\mathbb{X}$ be a weighted projective line in the sense of
Geigle-Lenzing \cite{MR915180}. Let ${\mathcal H} ={\rm coh}({\mathbb
  X})$ the category of coherent sheaves. This section collects some
essential properties of ${\mathcal H}$ used in this text (see
\cite{MR2385175,MR1648603,MR1423314,MR2232010} and references therein
for details). The full subcategory of ${\mathcal H}$
  formed by objects of finite length in $\mathcal H$ (i.e. torsion
  sheaves on $\mathbb X$) is denoted by ${\mathcal H}_{0}$. The full
  subcategory of ${\mathcal H}$ of vector bundles (or, torsion-free
  sheaves) is denoted by ${\mathcal H}_{+}$. Note that 
  \begin{equation}
    \label{eq:1}
  {\rm Hom}({\mathcal H}_{0}, {\mathcal H}_{+}) = 0 \ {\rm and} \ {\mathcal
  H}_{+} = \{ X \in {\mathcal H}\ |\ {\rm Hom}( {\mathcal H}_{0}, X) = 0
\}\,.
\end{equation}

To each nonzero vector bundle $E$ is associated its rank ${\rm rk}(E) \in
{\mathbb N}\setminus \{0\}.$ \emph{Line bundles} are rank one vector
bundles. Given any vector bundle $E$, its rank is $r$ if and only if there exists a filtration $0 =
E_{0} \subset E_{1} \subset \cdots \subset E_{r} = E$ in $\mathcal H$ such that  $E_{i}/E_{i-1}$ is a line bundle for every $i$. 

The category ${\mathcal H}$ is abelian, hereditary,
and Krull-Schmidt; it
has Serre duality and Auslander-Reiten sequences. The main
specificities of the Auslander-Reiten structure of ${\mathcal H}$ used
in this text depend on the Euler characteristic
  $\chi(\mathbb X)$ (\cite[Sect. 10]{MR2385175} and
\cite[Sect. 4]{MR1648603}).
\begin{itemize}
\item
The indecomposable objects in $\mathcal H_0$ form a
  disjoint union of tubes in the Auslander-Reiten quiver of $\mathcal
  H$. This union is parametrised by $\mathbb X$.
\item
If $\chi(\mathbb X)>0$ then
  the indecomposable objects in ${\mathcal
    H}_{+}$ form a single Auslander-Reiten component of
 shape
  ${\mathbb Z} \Delta$ where $\Delta$ is a graph of extended Dynkin
  type.
\item 
If $\chi(\mathbb X)=0$ then
    ${\mathcal H}_{+}$ decomposes as
  ${\mathcal H}_{+} = \ \bigvee_{q \in {\mathbb Q}} {\mathcal H}^{(q)}
  $ where each ${\mathcal H}^{(q)}$ is a full subcategory of
  ${\mathcal H}$ isomorphic to ${\mathcal H}_{0}$. The subcategory
  ${\mathcal H}_{0}$ is also denoted by ${\mathcal H}^{(\infty)}$, and
  ${\rm Hom}({\mathcal H}^{(p)}, {\mathcal H}^{(q)}) = 0$ if $q < p
  \leq \infty$; in particular each ${\mathcal H}^{(p)}$ is a disjoint
  union of orthogonal tubes. For every $q\in \mathbb Q$, the
  category $\left(\bigvee_{r\in \mathbb Q\cup \{\infty\},\,q<r} \mathcal
  H^{(r)}[-1] \vee \bigvee_{s\in \mathbb Q,\,s\leqslant q} \mathcal
  H^{(s)}\right)$ is denoted by $\mathcal H\langle q\rangle$.
Recall that this is a hereditary abelian generating subcategory whose
subcategory of finite length objects is  $\mathcal H^{(q)}$.
Also,
$\mathcal H\langle \infty\rangle$ denotes $\mathcal H$. 
\item
 If $\chi(\mathbb X)<0$ then the indecomposable
      objects in $\mathcal H_+$ form
 a disjoint union of Auslander-Reiten components of
  shape ${\mathbb Z}A_{\infty}$  in the
    Auslander-Reiten quiver of $\mathcal H$.  
\end{itemize}

The following two properties on morphism spaces in ${\mathcal H}$
play a fundamental role in this text: 
\begin{itemize}
\item Let $L \in {\mathcal H}_{+}$ be a line bundle and let ${\mathcal
    U} \subseteq {\mathcal H}_{0}$ be a tube, then there exists a
  unique quasi-simple $S \in {\mathcal U}$ such that ${\rm Hom}(L,S)$
  is nonzero  (and, moreover,
   is one dimensional). 
\item In the wild type case, given nonzero vector bundles $E,F \in
  {\mathcal H}_{+}$  it exists an integer $n_{0}$ such that ${\rm
    Hom}(E, \tau^{n} F) \neq 0$ for every $n \geq n_{0}$.  
\end{itemize}

\subsection{Paths in $\mathcal D^b({\rm coh}(\mathbb
  X))$}\label{morph}
 Let $\mathbb X$ be a weighted projective line. Let ${\mathcal H} = {\rm
   coh}({\mathbb X})$. Let ${\mathcal T} = {\mathcal D}^{b}(\mathcal
 H)$.   The following result is  useful to investigate
 ${\rm s.gl.dim.}$ of endomorphism algebras of tilting objects in
 ${\mathcal T}$. 

\begin{lem*}
\begin{enumerate}[(1)]
\item  Let $E \in {\mathcal H}_{+}$ be  indecomposable. Let
  ${\mathcal U} \subseteq {\mathcal H}_{0}$ be a tube. Then there
  exists $S \in \mathcal U$ quasi-simple such that ${\rm Hom}(E,S)
  \neq 0$ and ${\rm Ext}^{1}(S, \tau E) \neq 0$. In particular ${\rm Hom}(E,
  {\mathcal U}) \neq 0$ and ${\rm Ext}^{1}({\mathcal U}, E) \neq
  0$. Moreover, if $S=X_0\to \cdots \to X_n\to \cdots$ is the unique
  infinite sectional path in the Auslander-Reiten quiver of $\mathcal
  U$ then ${\rm Hom}(E,X_n)\neq 0$ and ${\rm Ext}^1(X_n,\tau E)\neq 0$
  for every $n\geqslant 0$.
\item Let ${\mathcal U}, {\mathcal V} \subseteq {\mathcal H}_{0}$ be
  tubes. Let $j$ be a positive integer. Then there exist
  quasi-simples $S \in {\mathcal U}$ and $S' \in {\mathcal V}$
together with   indecomposable vector bundles $E, E' \in {\mathcal H}_{+}$ and
   a
  path of nonzero morphisms in ${\rm ind}\,\mathcal T $
\[
S \rightarrow E[1] \rightarrow \cdots \rightarrow E'[j] \rightarrow
S'[j]\,.
\] 
\end{enumerate}
\end{lem*}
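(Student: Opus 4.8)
The plan is to deduce both parts from the two properties of morphism spaces recalled in \ref{intr} together with Serre duality, the only extra structural input being the filtration of a vector bundle by line bundles.

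For part $(1)$ I would start from the indecomposable vector bundle $E$ and use the recalled filtration $0=E_0\subset E_1\subset\cdots\subset E_r=E$ by line bundles to produce an epimorphism $\pi\colon E\twoheadrightarrow L$ onto the line bundle $L=E_r/E_{r-1}$. The recalled fact that a line bundle admits a non-zero morphism to a unique quasi-simple of the tube $\mathcal U$ then yields a non-zero $f\colon L\to S$ with $S\in\mathcal U$ quasi-simple; since $\pi$ is an epimorphism the composite $f\pi\colon E\to S$ is non-zero, so ${\rm Hom}(E,S)\neq 0$. Serre duality converts this into ${\rm Ext}^1(S,\tau E)\simeq D{\rm Hom}(\tau E,\tau S)\simeq D{\rm Hom}(E,S)\neq 0$, and applying $\tau^{-1}$ gives ${\rm Ext}^1(\tau^{-1}S,E)\neq 0$ with $\tau^{-1}S\in\mathcal U$ quasi-simple, which is the ``in particular'' claim. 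For the last sentence I would note that along the unique infinite sectional path $S=X_0\to X_1\to\cdots$ out of the quasi-simple $S$ all irreducible maps are monomorphisms, so each composite $X_0\to X_n$ is a monomorphism; precomposing the non-zero $E\to X_0$ with it stays non-zero (monomorphisms are left cancellable), giving ${\rm Hom}(E,X_n)\neq 0$, and Serre duality again gives ${\rm Ext}^1(X_n,\tau E)\neq 0$.

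For part $(2)$ the plan is to build the path one box at a time, using part $(1)$ for the two ends and a single identity for the middle. Fix any indecomposable vector bundle $E$ (a line bundle, say) and set $E_i=\tau^{i-1}E$ for $1\leqslant i\leqslant j$; each $E_i$ is again an indecomposable vector bundle because $\tau$ preserves $\mathcal H_+$. The crucial observation is that for any indecomposable $G$ one has ${\rm Ext}^1(G,\tau G)\simeq D{\rm Hom}(\tau G,\tau G)\neq 0$, this being the class of the almost split sequence ending at $G$; applying it to $G=E_i$ gives a non-zero morphism $E_i[i]\to E_{i+1}[i+1]$ in $\mathcal T$ for every $i$. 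Part $(1)$ applied to $E_1$ and $\mathcal U$ provides (through its ``in particular'' claim) a quasi-simple $S\in\mathcal U$ with ${\rm Ext}^1(S,E_1)\neq 0$, i.e. a non-zero $S\to E_1[1]$, and part $(1)$ applied to $E_j$ and $\mathcal V$ provides a quasi-simple $S'\in\mathcal V$ with ${\rm Hom}(E_j,S')\neq 0$, i.e. a non-zero $E_j[j]\to S'[j]$. Concatenating yields the required path $S\to E_1[1]\to\cdots\to E_j[j]\to S'[j]$ with $E=E_1$ and $E'=E_j$.

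The step I expect to be the main obstacle is precisely the middle of part $(2)$: producing vector bundles with non-vanishing consecutive first extension groups that persist for arbitrarily large $j$. A naive attempt would split into cases according to the sign of $\chi(\mathbb X)$ (self-extending objects in homogeneous tubes when $\chi=0$, the growth of ${\rm Hom}(E,\tau^{n}F)$ when $\chi<0$, and the transjective component when $\chi>0$), which is cumbersome and where the $\chi>0$ case is the most delicate since there are no self-extending indecomposable vector bundles. The point of the plan is to sidestep this entirely through the uniform identity ${\rm Ext}^1(G,\tau G)\neq 0$, valid in any hereditary category with Serre duality; the only thing then left to check carefully is that $\tau$ keeps every $E_i$ inside $\mathcal H_+$, so that the intermediate terms are genuinely suspensions of vector bundles as the statement requires.
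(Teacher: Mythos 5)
Your proposal is correct. For part $(1)$ it coincides with the paper's argument: epimorphism onto a line bundle from the filtration, the unique quasi-simple receiving a non-zero map from that line bundle, Serre duality, and the fact that irreducible morphisms along the ray out of a quasi-simple in a tube are monomorphisms.

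For part $(2)$, however, you take a genuinely different route. The paper argues by induction on $j$: the case $j=1$ produces a path $S\to L[1]\to S'[1]$ through a line bundle $L$ (using ${\rm Hom}(L,S')\neq 0$ and ${\rm Ext}^1(S,L)\neq 0$), and the inductive step concatenates a length-one path $S\leadsto S''[1]$ inside $\mathcal U$ with a shifted path $S''[1]\leadsto S'[j]$ coming from the hypothesis for $j-1$; note that this concatenation implicitly requires the terminal quasi-simple of the first path to agree with the initial one of the second, a matching the paper does not spell out. Your construction avoids induction altogether: you take the $\tau$-orbit $E_i=\tau^{i-1}E$ of a single line bundle, which stays in $\mathcal H_+$ since $\tau$ preserves rank, and you link consecutive suspensions by the classes of the almost split sequences, ${\rm Ext}^1(E_i,\tau E_i)\simeq D{\rm Hom}(\tau E_i,\tau E_i)\neq 0$, attaching the two ends via part $(1)$ applied to $E_1$ with $\mathcal U$ and to $E_j$ with $\mathcal V$. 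This buys you an explicit path with no endpoint-matching issue, at the modest cost of invoking two extra (standard) facts: that $\tau$ is an auto-equivalence preserving $\mathcal H_+$, and the non-vanishing of ${\rm Ext}^1(G,\tau G)$ for indecomposable $G$ in a hereditary category with Serre duality and no projectives. Both proofs are valid; yours is arguably the more robust of the two.
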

\begin{proof}
$(1)$ Using the filtration of $E$ there exists a line bundle $L \in
{\mathcal H}_{+}$ and an epimorphism $E \rightarrow L$ in ${\mathcal
  H}$. Then there exists a  quasi-simple $S \in {\mathcal U}$
such that ${\rm Hom}(L,S) \neq 0$. Taking a composite morphism $E
\twoheadrightarrow 
L \rightarrow S$ shows that ${\rm Hom}(E,S) \neq 0$. Thus ${\rm Ext}^{1}(S,
\tau E) \neq 0$ because of Serre duality.  

\medskip

Let $E\to S$ be a nonzero morphism. For every $n\in\mathbb N$ let
$X_n\to X_{n+1}$ be an irreducible morphism. This is a
monomorphism in $\mathcal H$ because $\mathcal U$ is a tube.
This and
the path
$E\to S\to X_1\to \cdots \to X_n$ show that ${\rm Hom}(E,X_n)\neq 0$.
Serre duality entails ${\rm
  Ext}^1(X_n,\tau E)\neq 0$.

\medskip

$(2)$ The proof is an induction on $j$. Assume  that $j = 1$. Let
$L \in {\mathcal H}_{+}$ be any line bundle. Then there exist
quasi-simples $S \in {\mathcal U}, S' \in {\mathcal V}$ such that ${\rm
  Hom}(L, S') \neq 0$ and ${\rm Ext}^{1}(S, L) \neq 0$. Whence the path $S
\rightarrow L[1] \rightarrow S'[1]$. Assume that $j > 1$ and that
$(2)$ holds true for $j - 1$. By induction hypothesis and because of
the case $j = 1$, there exist paths  
$S \leadsto S''[1]$ and $S''[1] \leadsto S'[j]$ in ${\rm
  ind}\,{\mathcal T}$ for some quasi-simples
$S, S'' \in {\mathcal U}$ and $S' \in {\mathcal V}$. Whence a path $S
\leadsto S'[j]$. 
\end{proof}

\subsection{Morphisms between Auslander-Reiten components in
  ${\mathcal T}$} \label{morphAR}  

In order to understand better the {\hereds}
of ${\mathcal T}$ which contain indecomposable summands of a given
tilting object in $\mathcal T$, it is useful to know whether or
not there exists 
a nonzero morphism between two given Auslander-Reiten components in
${\mathcal T}$.  
Recall that every Auslander-Reiten component in ${\mathcal H}$ is stable and 
${\mathcal T} =  \bigvee_{i \in {\mathbb Z}} ({\mathcal H}_{+}[i] \vee
{\mathcal H}_{0}[i])$. Therefore any given Auslander-Reiten component
of ${\mathcal T}$
equals $\mathcal U[i]$ where $i\in\mathbb Z$ and
  $\mathcal U$ is either a tube in $\mathcal H_0$ or else consists of
  objects in $\mathcal H_+$.
Note that if ${\mathcal U},{\mathcal U'} \subseteq {\mathcal H}_{0}$
are distinct tubes then ${\rm Hom}({\mathcal U}, {\mathcal U}'[i]) =
0$ for every $i \in {\mathbb Z}$ because ${\mathcal U}$ and
${\mathcal U}'$ are orthogonal, because ${\mathcal H}$ is hereditary
and because of Serre duality.
These considerations
    together with
\ref{morph},  yield the following
proposition
where (2), (3) and (4) follow from (1).

\begin{prop*}
\begin{enumerate}[(1)]
\item Let ${\mathcal U} \subseteq {\mathcal H}_{0}$ and ${\mathcal V}
  \subseteq {\mathcal H}_{+}$ be Auslander-Reiten components. Let $i
  \in {\mathbb Z}$ then
\begin{itemize}
\item ${\rm Hom}({\mathcal U}, {\mathcal U}[i]) \neq 0
  \Leftrightarrow i = 0$ or $i = 1$,
\item ${\rm Hom}({\mathcal U}, {\mathcal V}[i]) \neq 0 \Leftrightarrow
  i =1$,
\item ${\rm Hom}({\mathcal V}, {\mathcal U}[i]) \neq 0 \Leftrightarrow
  i = 0$,
\item ${\rm Hom}({\mathcal V}, {\mathcal V}[i]) \neq 0 \Leftrightarrow
  i = 0$ or $i = 1$. 
\end{itemize}
\item Both ${\mathcal H}_{0}$ and ${\mathcal H}_{+}$ are convex in
  ${\mathcal T}$. 
\item Let ${\mathcal U} \subseteq {\mathcal H}_{0}$ be a tube. Let
  ${\mathcal V}$ be an Auslander-Reiten component of ${\mathcal T}$
  distinct from ${\mathcal U}[i]$ for every $i \in {\mathbb Z}$. Then:
${\mathcal V} \subseteq \bigvee_{\ell \in {\mathbb Z}} {\mathcal
  H}_{0}[\ell] \Leftrightarrow (\forall i \in {\mathbb Z}) \ {\rm
  Hom}({\mathcal U}, {\mathcal V}[i]) = 0 \Leftrightarrow (\forall i
\in {\mathbb Z}) \ {\rm Hom}({\mathcal V}, {\mathcal U}[i]) = 0$.
\item Let ${\mathcal V}$ be an Auslander-Reiten component of
  ${\mathcal T}$ then:
${\mathcal V} \subseteq {\mathcal H}_{+}
  \Leftrightarrow (\forall i \neq 0)\ \ \  {\rm Hom}({\mathcal V}, {\mathcal
    H}_{0}[i]) = 0$.
\end{enumerate}
\end{prop*}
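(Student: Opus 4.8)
The plan is to prove part (1) directly and then obtain (2), (3) and (4) as purely combinatorial consequences of it (as the statement already anticipates). The two permanent tools are Happel's description of the ${\rm Hom}$-spaces in $\mathcal D^b(\mathcal H)$ recalled in the introduction, together with Serre duality and the existence results of \ref{morph}; the standing input from \ref{intr} is that ${\rm Hom}(\mathcal H_0,\mathcal H_+)=0$ and that every Auslander-Reiten component here is $\tau$-stable.

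For (1), I first note that $\mathcal U$, $\mathcal V$ and all their shifts live inside the single hereditary category $\mathcal H$, so Happel's description forces ${\rm Hom}(X,Y[i])=0$ for $X,Y\in\mathcal H$ whenever $i\notin\{0,1\}$; hence in each of the four rows only $i=0$ and $i=1$ need inspection. In the $\mathcal U$--$\mathcal U$ and $\mathcal V$--$\mathcal V$ rows, $i=0$ is nonzero via identity morphisms, while $i=1$ is nonzero by Serre duality: for an indecomposable $X$ in a tube or an $\mathcal H_+$-component one has ${\rm Ext}^1(X,\tau X)\cong D\,{\rm Hom}(\tau X,\tau X)\neq 0$, and $\tau X$ lies in the same ($\tau$-stable) component. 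In the $\mathcal U$--$\mathcal V$ row, $i=0$ vanishes since ${\rm Hom}(\mathcal H_0,\mathcal H_+)=0$, and $i=1$ is nonzero because \ref{morph}(1), applied to an indecomposable $E\in\mathcal V\subseteq\mathcal H_+$ and the tube $\mathcal U$, produces a quasi-simple $S\in\mathcal U$ with ${\rm Ext}^1(S,\tau E)\neq 0$ and $\tau E\in\mathcal V$. In the $\mathcal V$--$\mathcal U$ row, the same application of \ref{morph}(1) gives ${\rm Hom}(E,S)\neq 0$, so $i=0$ is nonzero; and $i=1$ vanishes because for $U\in\mathcal U$, $V\in\mathcal V$ Serre duality gives ${\rm Hom}(V,U[1])\cong D\,{\rm Hom}(U,\tau V)$ with $\tau V\in\mathcal H_+$, landing again in ${\rm Hom}(\mathcal H_0,\mathcal H_+)=0$.

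To deduce (2) I would read (1) as a monotonicity statement for the $\mathbb Z$-grading of $\mathcal T=\bigvee_{i\in\mathbb Z}(\mathcal H_+[i]\vee\mathcal H_0[i])$: assign to each indecomposable its degree $i$. Part (1), together with the orthogonality of distinct tubes, says that a nonzero morphism between indecomposables raises the degree by $0$ or $1$, that a degree-preserving map out of $\mathcal H_0$ must stay in $\mathcal H_0$ (because ${\rm Hom}(\mathcal H_0,\mathcal H_+)=0$), and that once a degree-preserving path leaves $\mathcal H_+$ for $\mathcal H_0$ it can never return to $\mathcal H_+$ at the same degree. For a path whose endpoints lie in $\mathcal H_0$ (respectively $\mathcal H_+$), degree monotonicity forces every term into degree $0$, and then the two one-way rules keep the whole path inside $\mathcal H_0$ (respectively $\mathcal H_+$); this is exactly the convexity asserted in (2).

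For (3) and (4) I would write the component $\mathcal V$ as $\mathcal W[m]$ with $\mathcal W$ either a tube in $\mathcal H_0$ or a component in $\mathcal H_+$, and read off ${\rm Hom}(\mathcal U,\mathcal V[i])$ and ${\rm Hom}(\mathcal V,\mathcal U[i])$ from (1) after shifting by $-m$, using orthogonality of distinct tubes in the torsion case. In (3) this shows the three conditions hold simultaneously exactly when $\mathcal W$ is a torsion tube (necessarily $\neq\mathcal U$) and fail simultaneously exactly when $\mathcal W\subseteq\mathcal H_+$. In (4), the hypothesis that ${\rm Hom}(\mathcal V,\mathcal H_0[i])=0$ for $i\neq 0$ rules out $\mathcal W$ being a tube (which by the $\mathcal U$--$\mathcal U$ row would force the two degrees $m$ and $m+1$ at which ${\rm Hom}(\mathcal V,\mathcal H_0[\,\cdot\,])$ is nonzero both to equal $0$), and then the $\mathcal V$--$\mathcal U$ row pins $m=0$, yielding $\mathcal V\subseteq\mathcal H_+$; the converse direction of (4) is just that same row. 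I expect the only genuine difficulty to be the two nonvanishing assertions in (1) that link a torsion tube to a torsion-free component, namely ${\rm Ext}^1(\mathcal U,\mathcal V)\neq 0$ and ${\rm Hom}(\mathcal V,\mathcal U)\neq 0$: everything else is heredity, a one-line Serre-duality computation, or the grading bookkeeping above, whereas these two rest precisely on the line-bundle/tube incidence packaged in \ref{morph}(1).
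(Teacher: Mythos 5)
Your proposal is correct and takes essentially the same route as the paper: the paper offers no detailed argument beyond noting that Happel's description, Serre duality, the vanishing ${\rm Hom}(\mathcal H_0,\mathcal H_+)=0$, orthogonality of distinct tubes, stability of the components, and Lemma \ref{morph} ``yield the proposition, where (2), (3) and (4) follow from (1)'' --- precisely the ingredients and the deduction order you use. Your write-up simply supplies the details (the degree bookkeeping for convexity, the shift analysis $\mathcal V=\mathcal W[m]$ for (3) and (4)) that the paper leaves implicit, and all of these details check out.
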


\subsection{ A sufficient criterion for two {\hereds} to coincide} \label{suf}

The proofs of the main theorems use the following general fact on
tilting objects in ${\mathcal T}$: Let $T \in {\mathcal T}$ be a
tilting object such that 
$T  \in {\mathcal H}_{0} \vee {\mathcal H}_{+}[1] \vee {\mathcal
  H}_{0}[1]$ and $T$ has indecomposable summands both in ${\mathcal
    H}_{0}$ and ${\mathcal H}_{0}[1]$, then there is no {\hered} ${\mathcal H}' \subseteq {\mathcal T}$ such that
  $T \in {\mathcal H}'$ (\ref{positions_AR3}).
The proof of this
 fact is based on
the
  following result which gives a sufficient condition for two {\hereds} ${\mathcal H}, {\mathcal H}' \subseteq
  {\mathcal T}$ to coincide.  

\begin{prop*}
Let ${\mathcal H}' \subseteq {\mathcal T}$ be a {\hered} having no
nonzero projective object. Assume that there 
exists a tube ${\mathcal U} \subseteq {\mathcal H}_{0}$ such that
${\mathcal U} \subseteq {\mathcal H}_{0}'$. Then ${\mathcal H} =
{\mathcal H}'$. 
\end{prop*}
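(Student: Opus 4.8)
The plan is to reconstruct the torsion pair $(\mathcal H_0,\mathcal H_+)$ purely from the single tube $\mathcal U$ and the morphism spaces of $\mathcal T$, and then observe that the very same reconstruction applied inside $\mathcal H'$ produces the same answer. First I would record that, since $\mathcal H'$ is a {\hered} with no non-zero projective object, it arises from a weighted projective line (by the characterisation recalled in the introduction); hence it carries its own torsion pair $(\mathcal H_0',\mathcal H_+')$ to which all the results of \ref{intr}, \ref{morph} and \ref{morphAR} apply verbatim, using the equivalence $\mathcal T\simeq\mathcal D^b(\mathcal H')$. In particular $\mathcal U$ is one and the same Auslander-Reiten component of $\mathcal T$, lying in $\mathcal H_0$ and in $\mathcal H_0'$.

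Second, I would apply \ref{morphAR}(3) to the tube $\mathcal U$, once inside $\mathcal H$ and once inside $\mathcal H'$. Membership of an Auslander-Reiten component $\mathcal V\neq\mathcal U[i]$ in $\bigvee_{\ell}\mathcal H_0[\ell]$ is equivalent to the intrinsic vanishing condition ${\rm Hom}(\mathcal U,\mathcal V[i])=0$ for all $i$, and the same holds for $\mathcal H'$ with the same tube $\mathcal U$; since this condition involves only $\mathcal U$ and morphisms in $\mathcal T$, the two subcategories $\bigvee_{\ell}\mathcal H_0[\ell]$ and $\bigvee_{\ell}\mathcal H_0'[\ell]$ coincide. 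Call this common subcategory $\mathcal S$.

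Third, and this is where the real work lies, I would give an intrinsic description of $\mathcal H_+$: an indecomposable $X\in\mathcal T$ lies in $\mathcal H_+$ if and only if ${\rm Hom}(X,\mathcal U)\neq 0$, ${\rm Hom}(\mathcal U,X)=0$ and ${\rm Hom}(X,\mathcal U[-1])=0$. The inclusion $\supseteq$ uses that every indecomposable vector bundle admits a non-zero morphism to $\mathcal U$ (\ref{morph}(1)) together with ${\rm Hom}(\mathcal H_0,\mathcal H_+)=0$; the inclusion $\subseteq$ is a check, position by position, based on the four equivalences of \ref{morphAR}(1). Among all Auslander-Reiten positions only $\mathcal U$, $\mathcal U[-1]$ and the degree-zero vector-bundle components satisfy the first two conditions, and the third condition ${\rm Hom}(X,\mathcal U[-1])=0$ is exactly what removes the shifted copy $\mathcal U[-1]$ while leaving $\mathcal H_+$ untouched. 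As this description depends only on $\mathcal U\subseteq\mathcal T$, it holds verbatim for $\mathcal H_+'$, whence $\mathcal H_+=\mathcal H_+'$.

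Finally, I would recover the torsion part by the formula $\mathcal H_0=\{X\in\mathcal S\text{ indecomposable}\mid{\rm Hom}(\mathcal H_+,X)\neq 0\}$: every torsion object receives a non-zero morphism from a vector bundle, whereas ${\rm Hom}(\mathcal H_+,\mathcal H_0[\ell])=0$ for $\ell\neq 0$, so this picks out exactly the degree-zero torsion inside $\mathcal S$. Since $\mathcal S$ and $\mathcal H_+$ are common to $\mathcal H$ and $\mathcal H'$, this forces $\mathcal H_0=\mathcal H_0'$. Then ${\rm ind}\,\mathcal H={\rm ind}\,\mathcal H_0\sqcup{\rm ind}\,\mathcal H_+={\rm ind}\,\mathcal H_0'\sqcup{\rm ind}\,\mathcal H_+'={\rm ind}\,\mathcal H'$, and since each category is Krull-Schmidt and equals the additive closure of its indecomposables, $\mathcal H=\mathcal H'$. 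I expect the main obstacle to be the third step: fixing the precise list of conditions cutting out $\mathcal H_+$ and verifying it in every Auslander-Reiten position and for every sign of the Euler characteristic, in particular in the tubular case where the vector-bundle components are themselves tubes and the orthogonality pattern of \ref{morphAR}(1) must be used with care.
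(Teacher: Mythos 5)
Your strategy is correct, and it is genuinely different from the paper's. The paper settles the torsion part first: for each tube $\mathcal V\subseteq\mathcal H_0$ other than $\mathcal U$, two applications of \ref{morphAR} (part (3)) place $\mathcal V$ in some shift $\mathcal H_0'[j]$, and $j=0$ is then forced by contradiction, using the path $S\to E[1]\to\cdots\to S'[j]$ of \ref{morph} (part (2)) together with convexity of $\mathcal H_0$, which traps $E[1]$ in $\mathcal H_0$, hence in $\mathcal U\subseteq\mathcal H_0'$, contradicting $E\in\mathcal H_+'$; once $\mathcal H_0=\mathcal H_0'$ is known, $\mathcal H_+=\mathcal H_+'$ falls out of \ref{morphAR} (part (4)). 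You defer the shift-normalisation problem: your step 2 only identifies the shift-invariant hull $\bigvee_\ell\mathcal H_0[\ell]=\bigvee_\ell\mathcal H_0'[\ell]$, which indeed needs nothing beyond \ref{morphAR} (part (3)); the normalisation is then done on the torsion-free side, where your three Hom-conditions relative to the single tube $\mathcal U$ cut out $\mathcal H_+$ intrinsically, and the torsion part is recovered last. This dispenses with the path construction of \ref{morph} (part (2)) entirely, at the cost of a position-by-position check against \ref{morphAR} (part (1)), which, as you anticipated, goes through uniformly for all signs of the Euler characteristic, including the tubular case.

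Two details need repair, though neither harms the architecture. First, the accounting in your step 3 is off: an object $X\in\mathcal U$ does \emph{not} satisfy your second condition, since ${\rm Hom}(\mathcal U,X)$ contains the identity of $X$; so $\mathcal U$ is eliminated by the condition ${\rm Hom}(\mathcal U,X)=0$, not by the third condition. This matters, because the third condition does \emph{not} eliminate $\mathcal U$ (one has ${\rm Hom}(\mathcal U,\mathcal U[-1])=0$ in the derived category of a hereditary category), so if $\mathcal U$ really passed the first two conditions, your characterisation would be false as stated. The correct tally is: condition (a) restricts to ${\rm ind}\,\mathcal H_+\cup{\rm ind}\,\mathcal U\cup{\rm ind}\,\mathcal U[-1]$; condition (b) removes $\mathcal U$; condition (c) removes $\mathcal U[-1]$. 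Second, your last step uses that every indecomposable torsion object receives a non-zero morphism from a vector bundle, whereas the paper records this (in \ref{intr}) only for quasi-simples. To cover an arbitrary indecomposable $X$ in a tube, let $S'$ be its quasi-socle, pick any line bundle $L$ and let $S$ be the quasi-simple of that tube with ${\rm Hom}(L,S)\neq 0$; writing $S'=\tau^m S$, the line bundle $\tau^m L$ satisfies ${\rm Hom}(\tau^m L,S')\cong{\rm Hom}(L,S)\neq 0$, and composing with the monomorphisms along the ray from $S'$ to $X$ gives the required non-zero morphism. With these two patches your proof is complete.
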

\begin{proof}
Note that since ${\mathcal H}'$ is not a module category it is the
category of coherent sheaves over some weighted projective line, and
since ${\mathcal D}^{b}({\mathcal H}') \simeq {\mathcal
  D}^{b}({\mathcal H})$ it follows that ${\mathcal H}'$ and ${\mathcal
  H}$ have isomorphic Auslander-Reiten quivers
\cite[Sect. 4]{MR1648603}. Hence ${\mathcal H}$ and ${\mathcal H}'$
play symmetric 
roles in the proposition.

It is useful to prove first that ${\mathcal H}_{0} = {\mathcal
  H}_{0}'$. Let ${\mathcal V} \subseteq {\mathcal H}_{0}$ be a tube
distinct from ${\mathcal U}$. Hence ${\mathcal V} \neq {\mathcal
  U}[j]$ for every $j \in {\mathbb Z}$. Applying \ref{morphAR} (part
$(3)$) to ${\mathcal H}, {\mathcal U}, {\mathcal V}$, yields
${\rm Hom}({\mathcal V}, {\mathcal U}[j]) = 0$ for every $j \in {\mathbb
  Z}$. Then applying the same result to ${\mathcal H}'$, ${\mathcal
  U}$, ${\mathcal V}$ entails that there exists $j \in {\mathbb Z}$
such that ${\mathcal V} \in {\mathcal H}_{0}'[j]$.  By absurd assume
that $j > 0$. Using \ref{morph} (part $(2)$) applied to ${\mathcal
  H}', {\mathcal U}$, ${\mathcal V}[-j]$ gives a path $S
\rightarrow E[1] \leadsto S'$ in ${\rm ind}\,{\mathcal T}$ such that $S \in
{\mathcal U}, S'\in {\mathcal V}$ are quasi-simple and $E \in
{\mathcal H}'_{+}$. But $S, S' \in {\mathcal H}_{0}$ and ${\mathcal
  H}_{0}$ is convex in ${\mathcal T}$. Therefore $E[1] \in {\mathcal
  H}_{0}$. Now because ${\rm ind}\,{\mathcal H}_{0}$ is a disjoint union of
pairwise orthogonal tubes and since $S \rightarrow E[1]$ is a nonzero
morphism with $S \in {\mathcal U} \subseteq {\mathcal H}_{0}$ and
$E[1] \in {\mathcal H}_{0}$, it follows that $E[1] \in {\mathcal U}$,
and hence $E[1] \in {\mathcal H}_{0}'$ (recall that ${\mathcal U}
\subseteq {\mathcal H}_{0}')$. This contradicts $E \in {\mathcal
  H}_{+}'$. Therefore $j \leq 0$. Dually, the same arguments show that
$j \geq 0$. Thus $j = 0$, and therefore ${\mathcal V} \subseteq
{\mathcal H}_{0}'$. This proves that ${\mathcal H}_{0} \subseteq
{\mathcal H}_{0}'$. And because of the symmetry between ${\mathcal H}$
and ${\mathcal H}'$ 
it follows that ${\mathcal H}_{0} = {\mathcal H}_{0}'$. 

There only remains
 to prove that ${\mathcal H}_{+} = {\mathcal H}_{+}'$. Let
${\mathcal V}$  be an Auslander-Reiten component of ${\mathcal T}$
such that ${\mathcal V} \subseteq {\mathcal H}_{+}$. Using
\ref{morphAR} (part $(4)$)  applied to ${\mathcal H}, {\mathcal U}$,
 ${\mathcal V}$ it follows that ${\rm Hom}({\mathcal V}, {\mathcal
  H}_{0}[i]) = 0 $ for $i \neq 0$. Since ${\mathcal H}_{0} = {\mathcal
  H}_{0}'$ the same result applied to ${\mathcal H}', {\mathcal U}$,
 ${\mathcal V}$ shows that $\mathcal V \subseteq {\mathcal
   H}_{+}'$. Therefore ${\mathcal H}_{+} \subseteq {\mathcal H}_{+}'$,
  and hence ${\mathcal H}_{+} = {\mathcal H}_{+}'$ by symmetry between
  ${\mathcal H}$ and ${\mathcal H}'$.
\end{proof}

\subsection{Description of one-parameter families of pairwise orthogonal tubes}
\label{tube_family}

\begin{prop*}
\begin{enumerate}[(1)]
\item The subcategory ${\mathcal H}_{0} \subseteq {\mathcal T}$ is a one-parameter family of tubes.
\item Let ${\mathcal T}_{0} \subseteq {\mathcal T}$ be a one-parameter
  family of tubes, then there exists an unique {\hered} ${\mathcal H}' \subseteq {\mathcal T}$ which is not a
  module category 
  and such that ${\mathcal T}_{0} = {\mathcal H}_{0}'$.
\end{enumerate}
\end{prop*}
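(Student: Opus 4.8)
My plan is to treat the two assertions separately: (1) follows directly from the structural facts gathered in \ref{intr} and \ref{morphAR}, the uniqueness half of (2) is an immediate application of \ref{suf}, and the existence half of (2) is where the real work lies.

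For (1), recall from \ref{intr} that the indecomposable objects of $\mathcal H_0$ form a disjoint union of tubes parametrised by $\mathbb X$; since $\mathcal H$ is hereditary these are stable tubes, and they remain Auslander--Reiten components of $\mathcal T=\mathcal D^b(\mathcal H)$. By \ref{morphAR} they are pairwise orthogonal and $\mathcal H_0$ is convex in $\mathcal T$, so only maximality has to be checked. I would let $\mathcal V$ be a tube orthogonal to every tube of $\mathcal H_0$ and show $\mathcal V\subseteq\mathcal H_0$. Fixing a tube $\mathcal U\subseteq\mathcal H_0$, part (3) of \ref{morphAR} gives $\mathcal V\subseteq\bigvee_{\ell}\mathcal H_0[\ell]$, so $\mathcal V=\mathcal W[\ell]$ for a tube $\mathcal W\subseteq\mathcal H_0$ and some $\ell\in\mathbb Z$. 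If $\ell\neq 0$ then $\mathcal V=\mathcal W[\ell]$ is not orthogonal to $\mathcal W\subseteq\mathcal H_0$ by part (1) of \ref{morphAR}, contradicting the choice of $\mathcal V$; hence $\ell=0$ and $\mathcal V=\mathcal W\subseteq\mathcal H_0$. Thus $\mathcal H_0$ is maximal and is a {\tube}.

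For (2), since $\mathcal T_0$ contains a tube, so does $\mathcal T$, and therefore $\mathcal T\simeq\mathcal D^b(\mathrm{coh}\,\mathbb X)$; I fix such an identification and write $\mathcal H=\mathrm{coh}\,\mathbb X$. Uniqueness is then immediate: if $\mathcal H'$ and $\mathcal H''$ are {\hereds} with no nonzero projective object and $\mathcal H_0'=\mathcal T_0=\mathcal H_0''$, then any tube $\mathcal U\subseteq\mathcal T_0$ lies in $\mathcal H_0''$, so \ref{suf} (applied with $\mathcal H'$ in the role of $\mathcal H$) yields $\mathcal H'=\mathcal H''$. For existence, I would first locate $\mathcal T_0$ among the tubes of $\mathcal T$. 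By \ref{intr} and \ref{morphAR}, every tube of $\mathcal T$ has the form $\mathcal U[\ell]$ with $\mathcal U$ a tube of $\mathcal H_0$ when $\chi(\mathbb X)\neq 0$, whereas when $\chi(\mathbb X)=0$ one has $\mathcal H_+=\bigvee_{q\in\mathbb Q}\mathcal H^{(q)}$ and every tube is $\mathcal U[\ell]$ with $\mathcal U\subseteq\mathcal H^{(q)}$ for some slope $q\in\mathbb Q\cup\{\infty\}$ (writing $\mathcal H^{(\infty)}=\mathcal H_0$). Using the convexity of $\mathcal T_0$ together with the non-vanishing morphisms bridging consecutive suspensions of $\mathcal H_0$ through $\mathcal H_+$ (and, in the tubular case, bridging distinct slopes via $\mathrm{Hom}(\mathcal H^{(p)},\mathcal H^{(q)})\neq 0$ for $p<q$), I would rule out families spread over several suspensions or slopes and conclude that all tubes of $\mathcal T_0$ lie in a single $\mathcal H^{(q)}[\ell]$; maximality then upgrades this to the equality $\mathcal T_0=\mathcal H^{(q)}[\ell]$. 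Finally I would exhibit $\mathcal H'$: when $\chi(\mathbb X)\neq 0$ necessarily $\mathcal T_0=\mathcal H_0[\ell]$, and $\mathcal H'=\mathcal H[\ell]$ works, being a {\hered} arising from the same weighted projective line and having no nonzero projective object; when $\chi(\mathbb X)=0$ I would additionally use the tubular (slope-rotating) equivalences to move $\mathcal H^{(q)}$ onto the torsion part of a rotated canonical heart $\mathcal H\langle q\rangle$ and take $\mathcal H'=\mathcal H\langle q\rangle[\ell]$.

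I expect the main obstacle to be the tubular case $\chi(\mathbb X)=0$ of the existence statement. Ruling out families that mix slopes demands a careful use of convexity against the one-directional morphisms between the $\mathcal H^{(q)}$, and, more seriously, realising an arbitrary slope $\mathcal H^{(q)}$ as the torsion part of a hereditary heart relies on the structure theory of tubular weighted projective lines, namely the existence of the rotated hearts $\mathcal H\langle q\rangle$ with no nonzero projective object, which I would cite from \cite{MR915180,MR1648603} rather than reprove here.
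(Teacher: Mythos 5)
Your proposal tracks the paper's own proof very closely: part (1) via the structure facts of \ref{intr} and the convexity/orthogonality statements of \ref{morphAR} plus a maximality check, uniqueness in (2) via \ref{suf} exactly as in the paper, and existence in (2) via shifts, the tubular rotations $\mathcal H\langle q\rangle$, convexity against bridging paths, and maximality of $\mathcal T_0$. The one substantive difference is the order of operations in the existence step, and it matters precisely in the tubular case. You propose to prove first that all tubes of $\mathcal T_0$ lie in a single $\mathcal H^{(q)}[\ell]$, ruling out slope-mixing by invoking $\mathrm{Hom}(\mathcal H^{(p)},\mathcal H^{(q)})\neq 0$ for $p<q$; but this non-vanishing at the level of the full subcategories does not by itself contradict the pairwise orthogonality of two \emph{particular} tubes, one of slope $p$ and one of slope $q$, and the stronger fact you actually need --- that every tube of slope $p$ admits non-zero morphisms, in a suitable shift, to every tube of slope $q>p$ --- is established nowhere in the paper's appendix. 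The paper sidesteps this by rotating \emph{first}: it picks one tube $\mathcal U\subseteq\mathcal T_0$, lands it in some $\mathcal H[\ell]$, replaces $\mathcal H[\ell]$ by its rotation $\mathcal H[\ell]\langle q\rangle$ so that $\mathcal U$ lies in the torsion part $\mathcal H_0'$, and only then compares every other tube $\mathcal V\subseteq\mathcal T_0$ against $\mathcal U$: orthogonality together with \ref{morphAR} (part (3)), applied in the rotated heart, gives $\mathcal V\subseteq\mathcal H_0'[t]$ for some $t$, and \ref{morph} (part (2)) plus convexity of $\mathcal T_0$ forces $t=0$, since otherwise a bridging path through shifted vector bundles of $\mathcal H_+'$ would have to lie in $\mathcal T_0$, contradicting that $\mathcal T_0$ is a union of pairwise orthogonal tubes. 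So your plan is sound and in substance the same proof, but to make the tubular case airtight using only the paper's tools you should either reorder it as the paper does (normalize the chosen tube to slope $\infty$ \emph{before} ruling out mixing), or explicitly import from the tubular structure theory the inter-slope non-vanishing for individual tubes, which is a genuine extra citation beyond \cite{MR915180,MR1648603} as used in the appendix.
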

\begin{proof}
(1)
   ${\mathcal H}_{0}$ is a disjoint union of pairwise
  orthogonal tubes (\ref{intr}) and it is convex in $\mathcal{T}$
  (\ref{morphAR}). Following \ref{morph} (part (2)), if a full convex
  subcategory intersects $\mathcal H_0$ and $\bigvee_{j\neq
    0}\mathcal H_0[j]$ nontrivially, then it intersects $\bigvee_{j\in
    \mathbb Z}\mathcal H_+[j]$ nontrivially. Therefore $\mathcal H_0$
  is maximal as a family of pairwise orthogonal tubes.

  (2) The uniqueness follows from \ref{suf}. Let ${\mathcal U}
  \subseteq {\mathcal T}_{0}$ be a tube. Let $\ell \in
  \mathbb Z$ be such that ${\mathcal U} \subseteq {\mathcal
    H}[\ell]$. If $\chi(\mathbb X)\neq 0$, then $\mathcal U \subseteq
  (\mathcal H[\ell])_0 = \mathcal H_0[\ell]$ (see \ref{intr}), and
  hence $\mathcal U\subseteq \mathcal H'_0$ where $\mathcal H'$
  denotes $\mathcal H[\ell]$. If $\chi(\mathbb X)=0$, then there
  exists $q\in \mathbb Q\cup \{\infty\}$ such that $\mathcal U\subseteq
  (\mathcal H[\ell])^{(q)}$, and hence $\mathcal U\subseteq \mathcal
  H'_0$ where $\mathcal H'$ denotes $\mathcal H[\ell]\langle
  q\rangle$.

  Let $\mathcal V\subseteq \mathcal T_0$ be a tube distinct from
  $\mathcal U$. By assumption on $\mathcal T_0$, there exists $t\in
  \mathbb Z$ such that $\mathcal V\subseteq \mathcal H'_0[t]$
  (~\ref{morphAR}, part (3)). Should
  $t$ be nonzero, $\mathcal T_0$ would intersect $\bigvee_{\ell\in
    \mathbb Z}\mathcal H'_+[\ell]$ because  it is convex in $\mathcal T$ (\ref{morph},
  part (2)). This is impossible because the 
  Auslander-Reiten components of $\mathcal T_0$ are pairwise
  orthogonal. Thus, $t=0$, and hence $\mathcal V\subseteq \mathcal
  H_0'$. Therefore, $\mathcal T_0\subseteq \mathcal H_0'$, and hence
  $\mathcal T_0=\mathcal H_0'$.
\end{proof}

% acknowledgement title (non LMS)
\begin{center}
  {\textsc{Acknowledgements}}
\end{center}

% \begin{acknowledgements}
  The work presented in this text was done while the second named
  author was associated professor  at
  Universit\'e Blaise Pascal and during visits to the first and third
  named authors. He would like to thank the members of
  the Department of Mathematics at Université Blaise Pascal as well as
  the first and third named authors for their warm hospitality.
%\end{acknowledgements}

\bibliographystyle{plain}
\bibliography{ALM-biblio}

% affiliations (LMS)

% \affiliationone{
%   Edson Ribeiro Alvares\\
%   Centro Polit\'ecnico, Departamento de Matem\'atica,
% Universidade Federal do~Paran\'a, CP019081, Jardim das Americas,
% Curitiba-PR, 81531-990, Brazil
% \email{rolo1rolo@gmail.com}
% }
% \affiliationtwo{
%   Patrick Le Meur\\
%   Laboratoire de Math\'ematiques, Universit\'e Blaise Pascal \&
%   CNRS, Complexe Scientifique Les Cézeaux, BP 80026, 63171 Aubi\`ere
%   cedex, France\\
%   \email{patrick.lemeur@math.univ-bpclermont.fr}
% }
% \affiliationthree{~}
% \affiliationfour{
%   Current address:\\
%   Universit\'e Paris Diderot, Sorbonne Paris Cit\'e, Institut de
%   Math\'ematiques de Jussieu-Paris Rive Gauche, UMR 7586, CNRS,
%   Sorbonne Universit\'es, UMPC Univ. Paris 06, F-75013, Paris, France\\
%   \email{patrick.le-meur@imj-prg.fr}
% }
% \affiliationthree{
%   Eduardo N. Marcos\\
%   IME-USP (Departamento de Mat\'ematica),
%   Rua Mat\~ao 1010 Cid. Univ., S\~ao Paulo, 055080-090, Brazil\\
%   \email{enmarcos@ime.usp.br}
% }

% end of
\end{document}